\documentclass[final,onefignum,onetabnum]{siamart251216}
\usepackage{booktabs}
\usepackage{amssymb,amsfonts,mathtools}
\usepackage{bm}
\usepackage{mathrsfs}
\usepackage{float}
\usepackage{algpseudocode}
\usepackage[utf8]{inputenc}
\usepackage[export]{adjustbox}
\usepackage{wrapfig}
\usepackage{multirow}
\usepackage[normalem]{ulem}
\usepackage{tikz}
\usepackage{xcolor}
\usepackage[title]{appendix}
\hypersetup{hypertexnames=false}

\theoremstyle{nonumberplain}
\theoremheaderfont{\normalfont\itshape}
\theorembodyfont{\normalfont}
\theoremseparator{.}
\theoremsymbol{}
\newtheorem{remark}{Remark}

\theoremstyle{plain}
\theoremheaderfont{\normalfont\sc}
\theorembodyfont{\normalfont\itshape}
\theoremseparator{.}
\theoremsymbol{}
\newtheorem{example}{Example}[section]
\newtheorem{assumption}{Assumption}
\newcommand{\LT}{\mathscr{L}}
\newcommand{\Nn}{\mathcal{N}}
\newcommand{\Aa}{\mathcal{A}}
\newcommand{\Ii}{\mathcal{I}}
\newcommand{\Rr}{\mathcal{R}}
\newcommand{\FT}{\mathscr{F}}

\newcommand{\Log}{\text{Log}}
\title{Convolution Quadrature with Adaptive History Compression for Variable-Order Fractional Reaction--Diffusion Equations}
\author{Pu Yuan\thanks{Mathematical Institute, Utrecht University, The Netherlands (\email{p.yuan@uu.nl}).}
\and Paul Zegeling\thanks{Mathematical Institute, Utrecht University, The Netherlands (\email{p.a.zegeling@uu.nl}).}}

\date{}

\begin{document}

\maketitle

\begin{abstract}
We develop a fully implicit fast method for semilinear variable-order fractional reaction--diffusion equations with periodic fractional diffusion and exponential-transition Scarpi memory.
A Volterra-resolvent analysis establishes local well-posedness and temporal regularity estimates that describe the weak initial singularity. 
Backward Euler convolution quadrature combined with Fourier pseudospectral discretization achieves mesh-uniform stability and positive-time first-order convergence.
To accelerate the scheme, we utilize Cauchy--FFT weight recovery and adaptive blockwise sum-of-exponentials compression, which significantly reduces the cost of history evaluation under local error control.
A logarithmic perturbation bound quantifies the effects of weight recovery, history compression, and inexact nonlinear solves, giving tolerance conditions that preserve the convergence order.
Numerical experiments confirm the first-order accuracy and demonstrate that the accelerated method efficiently resolves long-time spatial patterns at a substantially reduced computational cost.
\end{abstract}

\begin{MSCcodes}
65M12, 65R20, 65Y20, 35R11
\end{MSCcodes}

\section{Introduction}
Time-fractional evolution equations model diffusion and relaxation processes with memory, while variable-order models introduce a time-dependent order function $\alpha(t)$ to capture transitions between different memory regimes \cite{LorenzoHartley2002,GarrappaGiustiMainardi2021}. 
We adopt Scarpi's formulation in the Laplace domain because it preserves a convolution structure in time \cite{Scarpi1972b,GarrappaGiustiMainardi2021}. Specifically, we consider the semilinear problem
\begin{equation}
\label{eq:intro-model}
\begin{cases}
{}^{S}D_0^{\alpha(t)}u(x,t)
=\Aa u(x,t)+\Nn(u(x,t)),
& x\in\Omega,\quad 0<t\le T,\\
u(\cdot,t)\ \text{is periodic on }\Omega,
\qquad
u(\cdot,0)=u_0,
\end{cases}
\end{equation}
where ${}^{S}D_0^{\alpha(t)}$ denotes the Scarpi time derivative
with order function
\[
\alpha(t)=\alpha_2+(\alpha_1-\alpha_2)e^{-ct},
\qquad
0<\alpha_1,\alpha_2<1,\quad c>0.
\]
The diffusion operator is $\Aa=-d_u(-\Delta)^{\beta/2}$,
with $d_u>0$, where $\beta=2$ gives classical diffusion
and $0<\beta<2$ gives nonlocal diffusion on the torus
\cite{RoncalStinga2016}.
The reaction term $\Nn$ may be nonlinear, stiff, and vector-valued.

The model \eqref{eq:intro-model} can be reformulated as a Volterra integral equation with a convolution kernel whose Laplace transform $\Psi_\alpha$ is available explicitly.
Convolution quadrature (CQ) is therefore particularly suitable for this integral formulation, since its weights can be generated directly from $\Psi_\alpha$
\cite{Lubich1986,Lubich1988I,Lubich1988II}.
Garrappa and Giusti \cite{GarrappaGiusti2023} apply backward Euler CQ to exponential-type Scarpi equations and compute the weights by Cauchy quadrature and FFT.
However, direct CQ history summation requires $\mathcal O(N^2)$ work over $N$ time steps.
Fast and oblivious CQ and sum-of-exponentials (SOE) techniques address this cost for various convolution kernels \cite{SchadleLopezFernandezLubich2006,jiang2017fast,zhu2019fast}.
For the present problem, two issues must be treated simultaneously: the Scarpi weights are generated from a nonalgebraic Laplace symbol, and the semilinear Fourier discretization requires perturbation estimates that remain uniform under refinement of the spatial grid.
The latter estimates are established through discrete resolvent bounds.

We combine backward Euler CQ (BE--CQ) with Fourier pseudospectral discretization and accelerate only the history evaluation.
Cauchy--FFT recovery provides the CQ weights.
Recent recovered weights are used without SOE approximation, whereas older weights are fitted on adaptive lag blocks with inherited local error tolerances.
The partition and local ranks adapt to different lag scales, and the local fitting residuals, together with a checked right-hand-side bound, control the total history-compression defect.
The fully implicit current-step equation is unchanged and is solved by damped inexact Newton--GMRES with reaction-dependent preconditioning and reuse of Fourier transforms.

We first use the Sonine structure of the Scarpi kernel to reformulate \eqref{eq:intro-model} as a Volterra integral equation and establish the well-posedness of the continuous semilinear problem, and derive temporal regularity estimates needed for the CQ error analysis.
For the fully implicit BE--CQ scheme, discrete resolvent estimates yield stability constants independent of the Fourier grid.
Under the stated temporal regularity and Lipschitz assumptions, the full-history scheme is first-order accurate at every positive distance from the initial time:

\[
\max_{t_*\le t_n\le T}
\|u^n-u(t_n)\|_X
\le C_*h,
\qquad t_*>0.
\]
For the accelerated computation, the effects of CQ-weight recovery, blockwise SOE compression, and inexact nonlinear solution are separated.
If $U_{\rm e}$ denotes the exact-weight, full-history discrete solution and $U_{\rm num}$ the computed accelerated solution, the stability transfer gives
\[
\max_{0\le n\le N}
\|U_{\rm num}^n-U_{\rm e}^n\|_{\boldsymbol M}
\le
C_T(1+\log(N+1))
\bigl(
\tau_{\rm hist}
+\varepsilon_{\rm weight}
+\varepsilon_{\rm alg}
\bigr),
\]
under the stated assumptions and history-defect condition, with $C_T$ independent of $h$ and the Fourier grid, and $\|\cdot\|_{\boldsymbol M}$ denotes the discrete $L^2$ norm.
Choosing $\tau_{\rm hist}+\varepsilon_{\rm weight}+\varepsilon_{\rm alg}
=\mathcal O(h/[1+\log(N+1)])$ preserves positive-time first-order accuracy.

The numerical experiments examine a stiff reaction system and two-dimensional Nagumo and FitzHugh--Nagumo problems.
The stiff reaction test shows first-order temporal convergence and illustrates the advantage of blockwise fitting over the tested single-tail fits in meeting the prescribed residual tolerance.
The experiments demonstrate substantial computational savings while accurately approximating the full-history solutions. The PDE simulations exhibit comparable Newton--Krylov iteration counts. Additionally, we assess the sharpness of the history-defect bounds and verify the compression and right-hand-side acceptance conditions.

The paper is organized as follows.
Section~\ref{sec:preliminaries} introduces the Scarpi kernels and establishes the symbol and resolvent estimates.
Section~\ref{sec:well-posedness} studies the well-posedness and temporal regularity of the Scarpi--Volterra problem.
Section~\ref{sec:be-cq} develops the BE--CQ discretization, proves stability and temporal error estimates, and describes the Fourier pseudospectral solver.
Section~\ref{sec:adaptive-block-soe} presents the adaptive blockwise SOE method and analyzes its perturbation error and computational cost.
Section~\ref{sec:numerics} reports the numerical experiments, and we discuss the results and future directions in the conclusion section.

\section{Preliminaries}
\label{sec:preliminaries}
Let $\Omega=\prod_{j=1}^d(0,L_j)$ be a periodic domain.
For a periodic function $v$, let $\FT v$ denote its Fourier coefficients and set
\[
\boldsymbol\kappa_{\boldsymbol k}
=2\pi\left(\frac{k_1}{L_1},\ldots,\frac{k_d}{L_d}\right),
\qquad \boldsymbol k\in\mathbb Z^d.
\]
The periodic fractional Laplacian is the Fourier multiplier
\[\bigl(\FT[(-\Delta)^{\beta/2}v]\bigr)_{\boldsymbol k}
=|\boldsymbol\kappa_{\boldsymbol k}|^\beta(\FT v)_{\boldsymbol k},
\qquad 0<\beta\le2,\]
see \cite{RoncalStinga2016}.

\begin{assumption}
\label{ass:scarpi-resolvent}
Let $X=H^\sigma_{\mathrm{per}}(\Omega;\mathbb C)$, with $\sigma>d/2$, and let $\Aa=-d_u(-\Delta)^{\beta/2}$, where $d_u>0$ and $D(\Aa)=H^{\sigma+\beta}_{\mathrm{per}}(\Omega;\mathbb C)$.
The nonlinearity $\Nn:X\to X$ is Lipschitz on bounded subsets of $X$ and preserves the real subspace whenever real-valued solutions are considered.
\end{assumption}

The space $X$ is a Banach algebra, and $\Aa$ is densely defined and closed.
We equip $D(\Aa)$ with its graph norm and write $\Ii$ for the identity on $X$.

\begin{lemma}
\label{lem:fourier-resolvent}
For every \(\eta\in(0,\pi)\) and every \(\lambda\ne0\) satisfying \(|\operatorname{Arg}\lambda|\le\pi-\eta\), we have \(\lambda\in\rho(\Aa)\) and
\begin{equation}
\label{eq:fourier-resolvent-bound}
\|(\lambda\Ii-\Aa)^{-1}\|_{\mathcal L(X)}
\le\frac{1}{\sin\eta}\,|\lambda|^{-1}.
\end{equation}
The same estimate holds for every Fourier truncation of \(\Aa\), independently of the grid.
\end{lemma}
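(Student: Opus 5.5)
The plan is to diagonalize $\Aa$ in the Fourier basis and reduce the operator bound to a scalar inequality. On $X=H^\sigma_{\mathrm{per}}(\Omega;\mathbb C)$ I use the norm
\[
\|v\|_X^2=\sum_{\boldsymbol k}(1+|\boldsymbol\kappa_{\boldsymbol k}|^2)^{\sigma}|(\FT v)_{\boldsymbol k}|^2 .
\]
In this norm $\Aa$ acts as multiplication of the $\boldsymbol k$-th coefficient by $-\mu_{\boldsymbol k}$, where $\mu_{\boldsymbol k}=d_u|\boldsymbol\kappa_{\boldsymbol k}|^\beta\ge0$. So for any candidate $\lambda$, the natural inverse of $\lambda\Ii-\Aa$ is the Fourier multiplier with symbol $(\lambda+\mu_{\boldsymbol k})^{-1}$. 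Because the weights $(1+|\boldsymbol\kappa_{\boldsymbol k}|^2)^\sigma$ act diagonally, the $\mathcal L(X)$-norm of any Fourier multiplier equals the supremum of the modulus of its symbol. The lemma therefore reduces to showing
\[
\sup_{\mu\ge0}|\lambda+\mu|^{-1}\le\frac{1}{\sin\eta\,|\lambda|}.
\]

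The key step is the elementary inequality $|\lambda+\mu|\ge\sin\eta\,|\lambda|$ for all $\mu\ge0$ and all $\lambda\neq0$ with $|\operatorname{Arg}\lambda|\le\pi-\eta$. Write $\lambda=re^{i\theta}$ with $|\theta|\le\pi-\eta$.
\begin{itemize}
\item If $|\theta|\le\pi/2$, then $\operatorname{Re}\lambda\ge0$. Hence $|\lambda+\mu|\ge|\lambda|\ge\sin\eta\,|\lambda|$.
\item If $\pi/2<|\theta|\le\pi-\eta$, then $|\lambda+\mu|$ is at least the distance from $-\mu$ to the ray through $\lambda$. Minimizing over $\mu\ge0$ gives
\[
|\lambda+\mu|^2=r^2+2r\mu\cos\theta+\mu^2\ge r^2\sin^2\theta\ge r^2\sin^2\eta,
\]
where the last step uses $|\theta|\in[\pi/2,\pi-\eta]$.
\end{itemize}
In particular $\lambda+\mu_{\boldsymbol k}\neq0$ for every $\boldsymbol k$. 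The multiplier is therefore bounded on $X$, which proves the estimate \eqref{eq:fourier-resolvent-bound}.

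It remains to check that this bounded multiplier really is the resolvent, i.e.\ that $\lambda\in\rho(\Aa)$. Its range lies in $D(\Aa)=H^{\sigma+\beta}_{\mathrm{per}}$ because
\[
|\boldsymbol\kappa_{\boldsymbol k}|^\beta\,|\lambda+\mu_{\boldsymbol k}|^{-1}
\]
is uniformly bounded: it is at most $d_u^{-1}$ times $\mu/|\lambda+\mu|$, which is bounded by $1+|\lambda|/|\lambda+\mu|\le1+1/\sin\eta$. Given this, it is a two-sided inverse of $\lambda\Ii-\Aa$ on $D(\Aa)$, with the identity checked coefficientwise.

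For a Fourier truncation, $\Aa$ is replaced by the same multiplier restricted to the finite index set of the grid. This is either the Galerkin space or the pseudospectral space with the discrete Fourier transform and its associated discrete Sobolev norm. The truncated operator is still diagonal with the same nonnegative eigenvalues $\mu_{\boldsymbol k}$, so the supremum runs over a subset of $\{\mu_{\boldsymbol k}\}$ and obeys the same bound. The constant $1/\sin\eta$ never referred to the index set, which gives grid independence.

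The only delicate point is in this last step: being precise about which norm the discrete space carries. It must be the weighted $\ell^2$ norm of discrete Fourier coefficients, for which the discrete operator is normal and diagonal. If the paper later uses a discrete $L^2$ norm instead, the argument is the same with unit weights.
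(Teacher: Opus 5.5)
Your proposal is correct and follows the paper's proof. In both, the resolvent is the Fourier multiplier $(\lambda+d_u|\boldsymbol\kappa_{\boldsymbol k}|^\beta)^{-1}$, it is bounded via $|\lambda+s|\ge\sin\eta\,|\lambda|$ for $s\ge0$, and restricting the set of modes preserves the bound.

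You also fill in details the paper leaves implicit: the diagonal weighted norm that makes the constant exactly $1/\sin\eta$, and a case-split proof of the scalar inequality. In that proof, the quadratic minimization is what carries the second case; the preceding ``distance to the ray'' remark has the roles of $\lambda$ and $-\mu$ swapped and is not needed.
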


\begin{proof}
For \(s\ge0\), the angular restriction gives \(|\lambda+s|\ge(\sin\eta)|\lambda|\).
The Fourier multiplier
\[
\bigl(\FT[(\lambda\Ii-\Aa)^{-1}v]\bigr)_{\boldsymbol k}
=\frac{(\FT v)_{\boldsymbol k}}
{\lambda+d_u|\boldsymbol\kappa_{\boldsymbol k}|^\beta}
\]
therefore maps \(X\) into \(D(\Aa)\) and satisfies \eqref{eq:fourier-resolvent-bound}.
This proof remains valid upon restricting the set of Fourier modes.
\end{proof}

We next recall the framework of variable-order (VO) fractional calculus adopted in this work, based on the Laplace-domain formulation of Scarpi-type operators \cite{Scarpi1972b,GarrappaGiustiMainardi2021,GarrappaGiusti2023}.
\begin{definition}
Let $\alpha\in(0,1)$ be a constant.
The Riemann--Liouville fractional integral and the Caputo fractional derivative are defined by
\[\begin{aligned}
(I_0^{\alpha} f)(t)
&= \int_0^t \psi(t-\tau) f(\tau)\,d\tau,
\qquad
\psi(t)=\frac{t^{\alpha-1}}{\Gamma(\alpha)}, \\
(D_0^{\alpha} f)(t)
&= \int_0^t \phi(t-\tau) f'(\tau)\,d\tau,
\qquad
\phi(t)=\frac{t^{-\alpha}}{\Gamma(1-\alpha)}.
\end{aligned}\]
\end{definition}
The Laplace transforms of the kernels are
\[
\LT\{\psi\}(z)=z^{-\alpha},
\qquad
\LT\{\phi\}(z)=z^{\alpha-1},
\]
and hence $\LT\{\phi*\psi\}=z^{-1}$.
Therefore, $(\psi,\phi)$ form a Sonine pair, and this provides the convolutional inverse structure underlying the fractional integral and derivative.
Consequently,
\[
D_0^{\alpha} I_0^{\alpha} f = f,
\qquad
I_0^{\alpha} D_0^{\alpha} f = f - f(0),
\]
under suitable regularity assumptions \cite{Luchko2020,SamkoKilbasMarichev1993}.

Based on the Sonine relation, we now introduce the variable-order framework.
Following \cite{GarrappaGiusti2023}, Scarpi's construction may be formulated for a locally integrable function \(\alpha:[0,\infty)\to[0,1]\) whose Laplace transform is known in a right half-plane.
In this work we restrict throughout to the continuous exponential transition.

\begin{definition}
Let $\alpha(t)=\alpha_2+(\alpha_1-\alpha_2)e^{-ct}$ with $0<\alpha_1,\alpha_2<1$ and $c>0$.
Its Laplace transform and order symbol are
\[
\widehat\alpha(z)=\LT\{\alpha\}(z)
=\frac{\alpha_2c+\alpha_1z}{z(c+z)},
\qquad
B(z)=z\widehat\alpha(z)=\frac{\alpha_2c+\alpha_1z}{c+z},
\qquad \Re z>0.
\]
On \(\mathbb C\setminus(-\infty,0]\), use the rational continuation of \(B\) and the principal logarithm to define
\[
\Psi_\alpha(z)=z^{-B(z)},
\qquad
\Phi_\alpha(z)=z^{B(z)-1}=\frac{1}{z\Psi_\alpha(z)},
\]
and
\begin{equation}
\label{eq:inverse_LT_kernel}
\psi_\alpha(t)=\LT^{-1}\{\Psi_\alpha\}(t),
\qquad
\phi_\alpha(t)=\LT^{-1}\{\Phi_\alpha\}(t).
\end{equation}
For \(T>0\), \(f\in L^1(0,T;X)\), and \(u\in W^{1,1}(0,T;X)\), set
\[
{}^{S}I_0^{\alpha(t)}f=\psi_\alpha*f,
\qquad
{}^{S}D_0^{\alpha(t)}u=\phi_\alpha*u',
\]
where \((k*f)(t)=\int_0^t k(t-s)f(s)\,ds\).
\end{definition}

To show that $\psi_\alpha$ and $\phi_\alpha$ are well defined and satisfy the Sonine relation, we choose contours that avoid the singular point \(z=-c\), which is the pole of \(B\), and exclude the branch cut of the principal logarithm.
For \(\vartheta\in(0,\pi)\), set
\[
\Sigma_\vartheta
=\{z\in\mathbb C\setminus\{0\}:|\operatorname{Arg}z|<\vartheta\}.
\]
For \(\theta\in(\pi/2,\pi)\) and \(\kappa>0\), define the upward-oriented contour
\[
\Gamma_{\theta,\kappa}
=\{re^{-i\theta}:\infty>r\ge\kappa\}
\cup\{\kappa e^{i\varsigma}:-\theta\le\varsigma\le\theta\}
\cup\{re^{i\theta}:\kappa\le r<\infty\}.
\]
Then, \cite{YZ2026Scarpi} gives \(\kappa_0\ge\max\{1,2c\}\) and \(C>0\) such that
\[
|\Psi_\alpha(z)|\le C|z|^{-\alpha_1},
\qquad
|\Phi_\alpha(z)|\le C|z|^{\alpha_1-1},
\qquad
|z|\ge\kappa_0,\quad |\operatorname{Arg}z|\le\theta.
\]
The kernel construction and the inversion identities in \cite{YZ2026Scarpi} therefore give the following result.

\begin{proposition}
\label{prop:scarpi-kernels}
The inverse Laplace transforms in \eqref{eq:inverse_LT_kernel} are real-valued kernels of exponential order.
For every \(T>0\), they belong to \(L^1(0,T)\), and there is a constant \(C_T>0\) such that
\[
|\psi_\alpha(t)|\le C_Tt^{\alpha_1-1},
\qquad
|\phi_\alpha(t)|\le C_Tt^{-\alpha_1},
\qquad 0<t\le T.
\]
The kernels form a Sonine pair, satisfying \((\psi_\alpha*\phi_\alpha)(t)=1\) for almost every \(t>0\).
Consequently, for any \(u\in W^{1,1}(0,T,X)\), we have
\[{}^{S}I_0^{\alpha(t)}{}^{S}D_0^{\alpha(t)}u=u-u(0) \quad\text{a.e.\ on }(0,T).\]
Also, for any \(f\in L^1(0,T,X)\) such that \(\psi_\alpha*f\in W^{1,1}(0,T,X)\) and \((\psi_\alpha*f)(0)=0\), it holds that
\[
{}^{S}D_0^{\alpha(t)}{}^{S}I_0^{\alpha(t)}f=f \quad\text{a.e.\ on }(0,T).
\]
\end{proposition}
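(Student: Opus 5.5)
We will define the kernels by Bromwich-type integrals over the deformed contour $\Gamma_{\theta,\kappa}$ with $\kappa\ge\kappa_0$, and read off every property from the symbol bounds recalled just before the statement. Concretely, set
\[
\psi_\alpha(t)=\frac{1}{2\pi i}\int_{\Gamma_{\theta,\kappa}}e^{zt}\Psi_\alpha(z)\,dz,
\qquad
\phi_\alpha(t)=\frac{1}{2\pi i}\int_{\Gamma_{\theta,\kappa}}e^{zt}\Phi_\alpha(z)\,dz,\qquad t>0.
\]
Since $\theta>\pi/2$, on the rays $|e^{zt}|=e^{rt\cos\theta}$ decays exponentially, so the integrals converge absolutely for $t>0$. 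Analyticity of $\Psi_\alpha,\Phi_\alpha$ on the region to the right of the contour (which avoids $(-\infty,0]$ and the pole $-c$, as $\kappa\ge 2c$), together with the decay $|z|^{-\alpha_1}$ and $|z|^{\alpha_1-1}$, lets us close the contour to a vertical line $\Re z=\kappa$ by Jordan-type arguments. The resulting functions are therefore the genuine inverse Laplace transforms and are of exponential order $e^{\kappa t}$. Real-valuedness follows from $\overline{\Psi_\alpha(\bar z)}=\Psi_\alpha(z)$: $B$ has real coefficients and the principal logarithm commutes with conjugation, and the contour is symmetric.

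For the pointwise bounds on $0<t\le T$, we will choose the contour radius depending on $t$: take $\kappa=\max\{\kappa_0,1/t\}$. Independence of $\kappa$ on $[\kappa_0,\infty)$ follows from Cauchy's theorem. On the rays, substituting $r=s/t$ gives
\[
\int_{1/t}^\infty e^{rt\cos\theta}r^{-\alpha_1}\,dr\le Ct^{\alpha_1-1}.
\]
On the arc, of length $\lesssim 1/t$, we have $|e^{zt}|\le e$ and $|\Psi_\alpha|\le Ct^{\alpha_1}$, which again gives a contribution $Ct^{\alpha_1-1}$. When $1/t<\kappa_0$, i.e. $t\ge1/\kappa_0$, the fixed contour yields a bound $C_T\le C_T t^{\alpha_1-1}$ on $[1/\kappa_0,T]$. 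The same computation with exponent $\alpha_1-1$ gives $|\phi_\alpha(t)|\le C_Tt^{-\alpha_1}$. Integrability on $(0,T)$ is then immediate, because $\alpha_1-1>-1$ and $-\alpha_1>-1$.

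For the Sonine relation, both kernels are $L^1_{\rm loc}$ and of exponential order, so the convolution theorem gives $\LT\{\psi_\alpha*\phi_\alpha\}(z)=\Psi_\alpha(z)\Phi_\alpha(z)=z^{-1}=\LT\{1\}(z)$ for $\Re z>\kappa$. Uniqueness of the Laplace transform (Lerch) then yields $\psi_\alpha*\phi_\alpha=1$ a.e. The first inversion identity follows by associativity of convolution (Fubini on $L^1$ Bochner functions):
\[
\psi_\alpha*(\phi_\alpha*u')=(\psi_\alpha*\phi_\alpha)*u'=1*u'=u-u(0).
\]
For the second identity, set $g=\psi_\alpha*f\in W^{1,1}$ with $g(0)=0$, so that $1*g'=g$. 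Then
\[
\psi_\alpha*(\phi_\alpha*g')=1*g'=g=\psi_\alpha*f,
\]
so $\psi_\alpha*(\phi_\alpha*g'-f)=0$. To conclude $\phi_\alpha*g'=f$ a.e., we apply Titchmarsh's convolution theorem, valid componentwise after pairing with functionals in $X^*$. This requires that $\psi_\alpha$ not vanish a.e. near $0$, which is clear since $\Psi_\alpha\not\equiv0$. Alternatively, convolve with $\phi_\alpha$ to get $1*(\phi_\alpha*g'-f)=0$ and differentiate.

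The main obstacle is the contour-deformation step. One must check that $\Psi_\alpha$ is analytic in the closed region between $\Gamma_{\theta,\kappa}$ and the Bromwich line, and that the symbol bounds hold uniformly there. That region lies in $|z|\ge\kappa_0$ and $|\operatorname{Arg} z|\le\theta$, so the recalled estimates apply, but the details of $z^{-B(z)}$ near the arc need care. These are the facts we take from \cite{YZ2026Scarpi}.
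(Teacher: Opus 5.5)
Your proposal is correct. It is essentially the argument that the paper delegates to \cite{YZ2026Scarpi}: the paper itself only recalls the symbol bounds and cites that reference for the kernel construction and the inversion identities. Your steps are the contour representation on $\Gamma_{\theta,\kappa}$ with those bounds, the $t$-dependent radius $\kappa=\max\{\kappa_0,1/t\}$ (the same device the paper uses for $\Aa P(t)$ in Lemma~\ref{lem:S-resolvent-identity}), the Laplace convolution theorem with Lerch's uniqueness, and associativity of convolution for the two inversion identities.

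\textbf{Titchmarsh step.} One justification is wrong as stated. Titchmarsh's theorem needs $\psi_\alpha$ not to vanish a.e.\ on any initial interval $(0,\epsilon)$. The fact $\Psi_\alpha\not\equiv0$ only rules out $\psi_\alpha=0$ a.e.\ on all of $(0,\infty)$, so it does not give this.

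The correct reason is a decay comparison. If $\psi_\alpha$ vanished on $(0,\epsilon)$, then $\Psi_\alpha(\lambda)=O(e^{-\epsilon\lambda})$ as real $\lambda\to+\infty$. But for real $\lambda>0$, $B(\lambda)$ is a convex combination of $\alpha_1$ and $\alpha_2$, so $\Psi_\alpha(\lambda)=\lambda^{-B(\lambda)}\ge\lambda^{-\max\{\alpha_1,\alpha_2\}}$ for $\lambda\ge1$, which decays only algebraically.

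Your alternative argument avoids the issue entirely. Convolving $\psi_\alpha*(\phi_\alpha*g'-f)=0$ with $\phi_\alpha$ gives $1*(\phi_\alpha*g'-f)=0$, and differentiating yields the claim. That is the argument to keep.

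\textbf{Identifying the Laplace transform.} Your convolution-theorem step needs $\LT\{\psi_\alpha\}=\Psi_\alpha$. Passing to the Bromwich line is awkward here, because $|\Psi_\alpha|\sim|z|^{-\alpha_1}$ there and the integral converges only conditionally. It is cleaner to fix $\kappa\ge\kappa_0$ and, for $\Re\lambda>\kappa$, apply Fubini on $(0,\infty)\times\Gamma_{\theta,\kappa}$. This is absolutely convergent, since the integrand is $O(r^{-1-\alpha_1})$ on the rays. Cauchy's formula then gives $\int_0^\infty e^{-\lambda t}\psi_\alpha(t)\,dt=\Psi_\alpha(\lambda)$, and the same computation works for $\phi_\alpha$.
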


We next establish the full-sector symbol estimates.

\begin{lemma}
\label{lem:scarpi-global-cq-sector}
Let $\alpha_M=\max\{\alpha_1,\alpha_2\}$ and $C_{\log}=\sup_{x>0}\frac{x|\log x|}{1+x^2}$.
Assume
\begin{equation}
\label{eq:scarpi-explicit-admissibility}
\frac{\pi}{2}\alpha_M
+
|\alpha_2-\alpha_1|
\left(
C_{\log}+\frac12|\log c|
\right)
<\pi.
\end{equation}
Then there is an \(\eta\in(0,\pi/2)\) such that
\begin{equation}
\label{eq:scarpi-right-half-plane-angle}
\left|
\operatorname{Arg}\bigl(\Psi_\alpha(z)^{-1}\bigr)
\right|
\le\pi-\eta,
\qquad
\Re z\ge0,\quad z\ne0.
\end{equation}
\end{lemma}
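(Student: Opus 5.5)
The plan is to compute the argument of \(\Psi_\alpha(z)^{-1}=z^{B(z)}=\exp\bigl(B(z)\operatorname{Log}z\bigr)\) directly, by bounding \(|\operatorname{Im}(B(z)\operatorname{Log}z)|\) uniformly on the closed right half-plane by the left-hand side of \eqref{eq:scarpi-explicit-admissibility}. Since that quantity is strictly less than \(\pi\), the principal argument of \(\Psi_\alpha(z)^{-1}\) then coincides with \(\operatorname{Im}(B(z)\operatorname{Log}z)\), with no \(2\pi\)-ambiguity. Write \(z=re^{i\varphi}\) with \(r>0\) and \(|\varphi|\le\pi/2\), so \(\operatorname{Log}z=\log r+i\varphi\). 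Decompose
\[
B(z)=\alpha_2+(\alpha_1-\alpha_2)w(z),\qquad w(z)=\frac{z}{c+z}.
\]
This gives
\[
\operatorname{Im}\bigl(B(z)\operatorname{Log}z\bigr)=\operatorname{Re}B(z)\,\varphi+(\alpha_1-\alpha_2)\operatorname{Im}w(z)\,\log r .
\]

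\textbf{Step 1: the \(\varphi\)-term.} The Möbius map \(w\) sends \(\{\operatorname{Re}z\ge0\}\) into the closed disc \(|w-\tfrac12|\le\tfrac12\), so \(\operatorname{Re}w\in[0,1]\). Hence \(\operatorname{Re}B=(1-\operatorname{Re}w)\alpha_2+\operatorname{Re}w\,\alpha_1\) is a convex combination of \(\alpha_1\) and \(\alpha_2\). Therefore
\[
|\operatorname{Re}B(z)\,\varphi|\le\alpha_M\pi/2 .
\]

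\textbf{Step 2: the \(\log r\)-term, which is the main point.} We have
\[
\operatorname{Im}w=\frac{cr\sin\varphi}{|c+z|^2},\qquad |c+z|^2=c^2+r^2+2cr\cos\varphi\ge c^2+r^2,
\]
using \(\cos\varphi\ge0\). Thus \(|\operatorname{Im}w\log r|\le cr|\log r|/(c^2+r^2)\). Substituting \(r=cx\) gives
\[
\frac{x\,|\log x+\log c|}{1+x^2}\le\frac{x|\log x|}{1+x^2}+|\log c|\,\frac{x}{1+x^2}\le C_{\log}+\tfrac12|\log c| .
\]
Here we use \(x/(1+x^2)\le\tfrac12\). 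We should also check that \(C_{\log}\) is finite: the function \(x|\log x|/(1+x^2)\) tends to \(0\) at both \(0\) and \(\infty\).

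\textbf{Step 3: conclusion.} Combining the two steps,
\[
|\operatorname{Im}(B\operatorname{Log}z)|\le M:=\frac{\pi}{2}\alpha_M+|\alpha_2-\alpha_1|\Bigl(C_{\log}+\tfrac12|\log c|\Bigr)<\pi ,
\]
uniformly for \(\operatorname{Re}z\ge0\), \(z\ne0\). Since \(|\operatorname{Im}(B\operatorname{Log}z)|<\pi\), this number is exactly \(\operatorname{Arg}\bigl(\Psi_\alpha(z)^{-1}\bigr)\). Choosing \(\eta=\min\{\pi-M,\pi/4\}\in(0,\pi/2)\) yields \eqref{eq:scarpi-right-half-plane-angle}.

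The only delicate point is Step 2. One must see that the logarithmic growth of \(\log r\) is tamed by the decay of \(\operatorname{Im}w\) as \(r\to0\) and as \(r\to\infty\), and one must rescale by \(c\) so that the constant takes precisely the form appearing in \eqref{eq:scarpi-explicit-admissibility}. On the imaginary axis the bound \(\cos\varphi\ge0\) still holds, so the estimate is uniform up to the boundary of the half-plane.
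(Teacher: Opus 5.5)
Your proposal is correct and follows essentially the same route as the paper. Like the paper, you split $\operatorname{Im}(B(z)\Log z)=\operatorname{Re}B(z)\,\varphi+\operatorname{Im}B(z)\log r$ and bound $\operatorname{Re}B(z)$ through $\operatorname{Re}\bigl(z/(c+z)\bigr)\in[0,1]$. You then bound $|\operatorname{Im}B(z)|$ by $|\alpha_2-\alpha_1|\,x/(1+x^2)$ using $\cos\varphi\ge0$ and $r=cx$, and identify the principal argument with $\operatorname{Im}(B(z)\Log z)$ because it lies strictly inside $(-\pi,\pi)$; your explicit check that $C_{\log}<\infty$ and the choice $\eta=\min\{\pi-M,\pi/4\}$ are minor details the paper leaves implicit.
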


\begin{proof}
Let \(z=re^{i\varphi}\), with \(r>0\) and \(|\varphi|\le\pi/2\), and set \(x=r/c\).
Since \(0\le\Re(c/(c+z))\le1\), we have \(0<\Re B(z)\le\alpha_M\) and
\[
|\Im B(z)|
=|\alpha_2-\alpha_1|
\frac{x|\sin\varphi|}{1+2x\cos\varphi+x^2}
\le |\alpha_2-\alpha_1|\frac{x}{1+x^2}.
\]
Hence
\[
\begin{aligned}
|\Im(B(z)\Log z)|
&\le\alpha_M|\varphi|
+|\alpha_2-\alpha_1|\frac{x|\log(cx)|}{1+x^2}\\
&\le\frac{\pi}{2}\alpha_M
+|\alpha_2-\alpha_1|\left(C_{\log}+\frac12|\log c|\right)
<\pi.
\end{aligned}
\]
Since \(\Psi_\alpha(z)^{-1}=\exp(B(z)\Log z)\), its principal argument is \(\Im(B(z)\Log z)\).
Choosing \(\eta\in(0,\pi/2)\) smaller than the angular margin proves \eqref{eq:scarpi-right-half-plane-angle}.
\end{proof}

The next result extends this bound to a sector wider than the right half-plane and gives the magnitude and derivative estimates.

\begin{corollary}
\label{cor:scarpi-cq-sector-bounds}
Assume \eqref{eq:scarpi-explicit-admissibility}.
Then there exist $\delta_\Sigma,\eta_\Sigma\in(0,\pi/2)$ and a constant $C_\Sigma\ge1$ depending only on \(\alpha_1,\alpha_2,c\), such that, for every \(z\in\Sigma_{\pi/2+\delta_\Sigma}\),
\begin{equation}
\label{eq:scarpi-sector-angle}
\left|
\operatorname{Arg}\bigl(\Psi_\alpha(z)^{-1}\bigr)
\right|
\le\pi-\eta_\Sigma,
\end{equation}
and
\begin{equation}
\label{eq:scarpi-two-regime-bounds}
\begin{aligned}
C_\Sigma^{-1}|z|^{\alpha_1}
&\le
|\Psi_\alpha(z)^{-1}|
\le
C_\Sigma|z|^{\alpha_1},
&& |z|\ge1,
\\
C_\Sigma^{-1}|z|^{\alpha_2}
&\le
|\Psi_\alpha(z)^{-1}|
\le
C_\Sigma|z|^{\alpha_2},
&& 0<|z|\le1.
\end{aligned}
\end{equation}
Moreover,
\begin{equation}
\label{eq:scarpi-log-derivative-bound}
\left|
z\frac{d}{dz}\Psi_\alpha(z)^{-1}
\right|
\le
C_\Sigma|\Psi_\alpha(z)^{-1}|
\qquad
z\in\Sigma_{\pi/2+\delta_\Sigma}.
\end{equation}
\end{corollary}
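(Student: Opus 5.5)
We write \(\Psi_\alpha(z)^{-1}=\exp(B(z)\Log z)\) throughout. All three assertions then reduce to elementary bounds on the rational function \(B\) and its derivative, combined with the strict inequality \eqref{eq:scarpi-explicit-admissibility}.

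\emph{Angle bound \eqref{eq:scarpi-sector-angle}.} We argue by perturbation from Lemma~\ref{lem:scarpi-global-cq-sector}. Write \(z=re^{i\varphi}\) with \(|\varphi|<\pi/2+\delta\) and set \(x=r/c\). The pole \(z=-c\) of \(B\) lies on the negative axis. For \(|\varphi|\le\pi/2+\delta\) with \(\delta<\pi/2\) we have \(|c+z|^2=c^2(1+2x\cos\varphi+x^2)\ge c^2(1-\sin\delta)(1+x^2)\) up to a harmless constant, since \(1+2x\cos\varphi+x^2\ge(1-|\cos\varphi|)(1+x^2)\). Hence
\[
|\Im B(z)|\le|\alpha_2-\alpha_1|\,\frac{x|\sin\varphi|}{(1-\sin\delta)(1+x^2)},
\qquad
|\Re B(z)-\alpha_*|\ \text{is controlled similarly},
\]
so \(\Re B\) stays in a neighbourhood of \([\min\alpha_i,\alpha_M]\) of size \(O(\delta)\). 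Then
\[
\Im(B\Log z)=\varphi\,\Re B+\log r\,\Im B
\]
is bounded by \((\pi/2+\delta)(\alpha_M+O(\delta))+|\alpha_2-\alpha_1|(1+O(\delta))\bigl(C_{\log}+\tfrac12|\log c|\bigr)\). This bound is continuous in \(\delta\) and equals the left side of \eqref{eq:scarpi-explicit-admissibility} at \(\delta=0\). Choosing \(\delta_\Sigma\) small therefore keeps it below \(\pi-\eta_\Sigma\) for some \(\eta_\Sigma>0\), uniformly in \(r\). The \(O(\delta)\) corrections must be uniform in \(r\in(0,\infty)\). This holds because all \(x\)-dependence enters only through \(x/(1+x^2)\), \(x^2/(1+x^2)\) and \(x\log(cx)/(1+x^2)\), which are bounded.

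\emph{Two-regime magnitude bounds \eqref{eq:scarpi-two-regime-bounds}.} We use \(|\Psi_\alpha(z)^{-1}|=\exp(\Re B\log r-\varphi\Im B)\). The term \(\varphi\Im B\) is bounded uniformly by the estimate above, so it contributes only a constant factor. For the other term, write \(B(z)-\alpha_1=(\alpha_2-\alpha_1)c/(c+z)\). On the sector this gives \(|B(z)-\alpha_1|\le C/(1+r)\), so for \(r\ge1\) we get \(|(\Re B-\alpha_1)\log r|\le C\log r/(1+r)\le C\). Likewise \(B(z)-\alpha_2=(\alpha_1-\alpha_2)z/(c+z)\) gives \(|B-\alpha_2|\le Cr\), so for \(r\le1\) we get \(|(\Re B-\alpha_2)\log r|\le Cr|\log r|\le C\). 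Exponentiating yields both two-sided bounds with a constant \(C_\Sigma\).

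\emph{Log-derivative bound \eqref{eq:scarpi-log-derivative-bound}.} Differentiating,
\[
z\frac{d}{dz}\Psi_\alpha(z)^{-1}=\Psi_\alpha(z)^{-1}\bigl(zB'(z)\Log z+B(z)\bigr).
\]
Here \(B\) is bounded on the sector, and
\[
zB'(z)=\frac{(\alpha_1-\alpha_2)cz}{(c+z)^2},
\]
so \(|zB'(z)\Log z|\le C\,r(|\log r|+\pi)/(1+r)^2\), which is bounded uniformly for \(r\in(0,\infty)\). We take \(C_\Sigma\) to be the largest of the constants obtained in the three steps.

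The main obstacle is the first step. One must check carefully that widening the sector past the imaginary axis only degrades the lower bound on \(|c+z|\) by a factor \(1-\sin\delta\), and that the contribution \(\varphi\Re B\) grows only by \(O(\delta)\). Both are needed to keep the strict margin supplied by \eqref{eq:scarpi-explicit-admissibility} uniformly in \(r\). I would handle this through the explicit factorisation \(1+2x\cos\varphi+x^2\ge(1-|\cos\varphi|)(1+x^2)\) and the continuity argument in \(\delta\) described above.
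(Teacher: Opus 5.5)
Your proof is correct, but it takes a different route from the paper's.

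The paper argues qualitatively. It invokes the asymptotics $(B(z)-\alpha_1)\Log z=O((1+\log|z|)/|z|)$ as $|z|\to\infty$ and $(B(z)-\alpha_2)\Log z=O(|z|(1+|\log|z||))$ as $|z|\to0$ in closed sectors avoiding the negative axis. These give the two-regime magnitude bounds and an angular margin near $0$ and $\infty$, where only $\alpha_i(\pi/2+\delta)<\pi$ is needed. On the remaining compact annulus, the paper extends the right-half-plane margin of Lemma~\ref{lem:scarpi-global-cq-sector} to a slightly wider sector by continuity, and it obtains the magnitude bounds there from the nonvanishing of $\Psi_\alpha(z)^{-1}$.

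You instead re-run the explicit estimate of Lemma~\ref{lem:scarpi-global-cq-sector} with $\delta$-dependent constants. Your key tool is the uniform inequality $1+2x\cos\varphi+x^2\ge(1-|\cos\varphi|)(1+x^2)$; for $\cos\varphi<0$ the difference is exactly $|\cos\varphi|(1-x)^2$. This inequality controls, uniformly in $r\in(0,\infty)$, the quantities $\Im B$, the $O(\sin\delta)$ excursion of $\Re B$ outside $[\min\{\alpha_1,\alpha_2\},\alpha_M]$, $|B-\alpha_1|$, $|B-\alpha_2|$ and $zB'$. Your log-derivative step is the same as the paper's.

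What each approach buys:
\begin{itemize}
\item Your route avoids compactness and the external asymptotics of \cite{YZ2026Scarpi}. It also yields explicit $\delta_\Sigma$, $\eta_\Sigma$, $C_\Sigma$ in terms of the slack in \eqref{eq:scarpi-explicit-admissibility}.
\item The paper's route gives nonexplicit constants. In exchange, it shows that the admissibility condition is only needed on a bounded annulus, since the margin near $0$ and $\infty$ holds for any $\alpha_1,\alpha_2\in(0,1)$.
\end{itemize}

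One step you leave implicit: since $|\Im(B\Log z)|\le\pi-\eta_\Sigma<\pi$, you have $\operatorname{Arg}\exp(B\Log z)=\Im(B\Log z)$. This is what converts your bound into \eqref{eq:scarpi-sector-angle}.
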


\begin{proof}
The proof of \cite{YZ2026Scarpi} and the identity \(B(z)-\alpha_2=(\alpha_1-\alpha_2)z/(c+z)\) give
\[
\begin{aligned}
(B(z)-\alpha_1)\Log z
&=O\!\left(\frac{1+\log|z|}{|z|}\right),
&& |z|\to\infty,\\
(B(z)-\alpha_2)\Log z
&=O\!\left(|z|(1+|\log|z||)\right),
&& |z|\to0,
\end{aligned}
\]
uniformly in each fixed closed sector avoiding the negative real axis.
Since \(\Psi_\alpha(z)^{-1}=\exp(B(z)\Log z)\), these estimates give \eqref{eq:scarpi-two-regime-bounds} outside a compact annulus and an angular margin at both ends of a sector slightly wider than the right half-plane.
On the remaining compact annulus, \eqref{eq:scarpi-right-half-plane-angle} and continuity extend the angular margin to a common enlarged sector.
The nonvanishing of \(\Psi_\alpha(z)^{-1}\) extends the magnitude bounds across the annulus.
This proves \eqref{eq:scarpi-sector-angle} and \eqref{eq:scarpi-two-regime-bounds}.
Finally,
\[
z\frac{d}{dz}\Psi_\alpha(z)^{-1} =\Psi_\alpha(z)^{-1} (B(z)+z B'(z) \Log z), \qquad B'(z) = \frac{c(\alpha_1-\alpha_2)}{(c+z)^2}.
\]
Both \(B(z)\) and \(zB'(z)\Log z\) have finite limits at zero and infinity and are bounded on the intervening compact annulus.
This proves \eqref{eq:scarpi-log-derivative-bound}.
\end{proof}

Combining these estimates with Lemma~\ref{lem:fourier-resolvent} yields the operator bounds.

\begin{corollary}
\label{cor:scarpi-operator-transfer-bounds}
Assume \eqref{eq:scarpi-explicit-admissibility} and let $\Rr(z)=(\Psi_\alpha(z)^{-1}\Ii-\Aa)^{-1}$.
For \(z\in\Sigma_{\pi/2+\delta_\Sigma}\), \(\Psi_\alpha(z)^{-1}\in\rho(\Aa)\), and
\begin{equation}
\label{eq:operator-transfer-P-bounds}
\left\|\Rr(z)\right\|_{\mathcal L(X)}
+\left\|z\frac{d}{dz}
\Rr(z)\right\|_{\mathcal L(X)}
\le C
\begin{cases}
|z|^{-\alpha_1},& |z|\ge1,\\
|z|^{-\alpha_2},& 0<|z|\le1,
\end{cases}
\end{equation}
whereas
\begin{equation}
\label{eq:operator-transfer-S-bounds}
\left\|\Psi_\alpha(z)^{-1}\Rr(z)\right\|_{\mathcal L(X)}
+\left\|z\frac{d}{dz}\left[\Psi_\alpha(z)^{-1}\Rr(z)\right]\right\|_{\mathcal L(X)}\le C.
\end{equation}
The constants also hold for Fourier truncations of \(\Aa\), independently of the grid.
\end{corollary}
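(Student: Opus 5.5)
We combine the angle bound \eqref{eq:scarpi-sector-angle} with Lemma~\ref{lem:fourier-resolvent}, and obtain the $z$-derivatives by the chain rule.
Set $\mu(z)=\Psi_\alpha(z)^{-1}$.
By Corollary~\ref{cor:scarpi-cq-sector-bounds}, for $z\in\Sigma_{\pi/2+\delta_\Sigma}$ we have $\mu(z)\ne0$ and $|\operatorname{Arg}\mu(z)|\le\pi-\eta_\Sigma$.
Lemma~\ref{lem:fourier-resolvent} with $\eta=\eta_\Sigma$ then gives $\mu(z)\in\rho(\Aa)$ and
\[
\|\Rr(z)\|_{\mathcal L(X)}\le\frac{1}{\sin\eta_\Sigma}\,|\mu(z)|^{-1}.
\]
Inserting the two-regime bounds \eqref{eq:scarpi-two-regime-bounds}, in the form $|\mu(z)|^{-1}\le C_\Sigma|z|^{-\alpha_1}$ for $|z|\ge1$ and $|\mu(z)|^{-1}\le C_\Sigma|z|^{-\alpha_2}$ for $|z|\le1$, yields the first term of \eqref{eq:operator-transfer-P-bounds}.
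Multiplying instead by $|\mu(z)|$ gives $\|\mu(z)\Rr(z)\|\le 1/\sin\eta_\Sigma$, which is the first term of \eqref{eq:operator-transfer-S-bounds}.

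For the derivatives, we use that the resolvent is analytic on $\rho(\Aa)$ with $\frac{d}{d\lambda}(\lambda\Ii-\Aa)^{-1}=-(\lambda\Ii-\Aa)^{-2}$.
Since $\mu$ is holomorphic on the sector (by the rational continuation of $B$ and the principal logarithm), the chain rule gives
\[
z\frac{d}{dz}\Rr(z)=-\,z\mu'(z)\,\Rr(z)^2 .
\]
By \eqref{eq:scarpi-log-derivative-bound} we have $|z\mu'(z)|\le C_\Sigma|\mu(z)|$, and therefore
\[
\Bigl\|z\frac{d}{dz}\Rr(z)\Bigr\|\le C_\Sigma\,\|\mu(z)\Rr(z)\|\,\|\Rr(z)\|\le \frac{C_\Sigma}{\sin\eta_\Sigma}\,\|\Rr(z)\|.
\]
This reduces the derivative term to the bound already proved and completes \eqref{eq:operator-transfer-P-bounds}.
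For the second operator, the product rule gives
\[
z\frac{d}{dz}\bigl[\mu\Rr\bigr]=z\mu'\,\Rr+\mu\, z\frac{d}{dz}\Rr=z\mu'(z)\,\Rr(z)\bigl(\Ii-\mu(z)\Rr(z)\bigr).
\]
Each factor is controlled: $\|z\mu'\Rr\|\le C_\Sigma\|\mu\Rr\|\le C$, and $\|\Ii-\mu\Rr\|\le 1+1/\sin\eta_\Sigma$.
Equivalently, one may use the identity $\mu\Rr=\Ii+\Aa\Rr$.
This proves \eqref{eq:operator-transfer-S-bounds}.

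The only delicate point is that all constants must come solely from $\eta_\Sigma$ and $C_\Sigma$, and must not depend on the spectrum of $\Aa$.
This holds because Lemma~\ref{lem:fourier-resolvent} is a pure Fourier-multiplier bound, uniform over the modes $s=d_u|\boldsymbol\kappa_{\boldsymbol k}|^\beta\ge0$.
Each step above is an identity between Fourier multipliers with symbol $(\mu+s)^{-1}$, or a bound valid mode by mode.
The same computation therefore applies verbatim to any Fourier truncation of $\Aa$, which gives the grid-independence.
I expect no real obstacle beyond checking that $\mu$ is holomorphic on the enlarged sector, so that the chain rule applies.
This holds because $-c\notin\Sigma_{\pi/2+\delta_\Sigma}$ and the branch cut is excluded.
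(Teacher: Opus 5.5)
Your proposal is correct and follows the paper's proof. The undifferentiated bounds come from Lemma~\ref{lem:fourier-resolvent} combined with the angle and magnitude bounds of Corollary~\ref{cor:scarpi-cq-sector-bounds}, the derivative bound from $\frac{d}{dz}\Rr=-\bigl(\frac{d}{dz}\Psi_\alpha^{-1}\bigr)\Rr^2$ together with \eqref{eq:scarpi-log-derivative-bound}, and the second estimate from the product rule; your factorization $z\mu'\Rr(\Ii-\mu\Rr)$ and the mode-by-mode argument for grid independence simply make explicit what the paper leaves implicit.
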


\begin{proof}
The non-differentiated bounds follow from Lemma~\ref{lem:fourier-resolvent} and Corollary~\ref{cor:scarpi-cq-sector-bounds}.
The identity
\[
\frac{d}{dz}\Rr(z)
=-\left(\frac{d}{dz}\Psi_\alpha(z)^{-1}\right)
\Rr^2(z)
\]
and \eqref{eq:scarpi-log-derivative-bound} give the differentiated bound in \eqref{eq:operator-transfer-P-bounds}.
The product rule gives \eqref{eq:operator-transfer-S-bounds}.
The constants remain unchanged under Fourier truncation by Lemma~\ref{lem:fourier-resolvent}.
\end{proof}

\section{Well-posedness of the Scarpi-type Volterra problem}
\label{sec:well-posedness}
We consider
\begin{equation}
\label{eq:abstract_semi_scarpi}
{}^{S}D_0^{\alpha(t)}u(t)=\Aa u(t)+\Nn(u(t)),
\qquad u(0)=u_0,
\end{equation}
under Assumption~\ref{ass:scarpi-resolvent} and condition~\eqref{eq:scarpi-explicit-admissibility}. Then, from Proposition \ref{prop:scarpi-kernels}, we can express the solution of \eqref{eq:abstract_semi_scarpi} in a Volterra form.

\begin{proposition}
\label{prop:scarpi-volterra-form}
Let \(u\in C([0,T];D(\Aa))\cap W^{1,1}(0,T;X)\), with
\(u(0)=u_0\). Then \(u\) satisfies \eqref{eq:abstract_semi_scarpi}
almost everywhere on \((0,T)\) if and only if it satisfies the Volterra integral equation
\begin{equation}
\label{eq:volterra-nonlinear}
u(t)=u_0+\int_0^t \psi_\alpha(t-\tau)\left[
\Aa u(\tau)+\Nn(u(\tau))\right]\,d\tau.
\end{equation}
\end{proposition}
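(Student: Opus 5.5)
The plan is to apply the two inversion identities of Proposition~\ref{prop:scarpi-kernels} directly, using $f:=\Aa u+\Nn(u)$ as the forcing term. The preliminary step is to confirm that $f\in L^1(0,T;X)$. Since $u\in C([0,T];D(\Aa))$ and $D(\Aa)$ carries the graph norm, $\Aa u\in C([0,T];X)$. Moreover, $u([0,T])$ is a bounded subset of $X$, because $D(\Aa)\hookrightarrow X$ continuously. As $\Nn$ is Lipschitz on bounded subsets of $X$ by Assumption~\ref{ass:scarpi-resolvent}, $\Nn(u)\in C([0,T];X)$. Hence $f\in C([0,T];X)\subset L^1(0,T;X)$, and the convolution $\psi_\alpha*f$ is well defined.

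\emph{Forward direction.} Assume \eqref{eq:abstract_semi_scarpi} holds a.e., that is, $\phi_\alpha*u'=f$ a.e. on $(0,T)$. Convolve both sides with $\psi_\alpha$. Since $u\in W^{1,1}(0,T;X)$, the first identity of Proposition~\ref{prop:scarpi-kernels} gives ${}^{S}I_0^{\alpha(t)}{}^{S}D_0^{\alpha(t)}u=u-u_0$ a.e. Therefore $u-u_0=\psi_\alpha*f$ a.e., which is \eqref{eq:volterra-nonlinear}. Both sides are continuous: $u$ by hypothesis, and $\psi_\alpha*f$ because $\psi_\alpha\in L^1$ and $f\in C([0,T];X)$. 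The identity therefore holds for every $t\in[0,T]$.

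\emph{Reverse direction.} Assume \eqref{eq:volterra-nonlinear}. Then $\psi_\alpha*f=u-u_0$. This function lies in $W^{1,1}(0,T;X)$ because $u$ does. It also vanishes at $t=0$, since $u(0)=u_0$; alternatively, $\|(\psi_\alpha*f)(t)\|\le \|f\|_\infty\int_0^t|\psi_\alpha|\to0$. The second identity of Proposition~\ref{prop:scarpi-kernels} then yields
\[
f={}^{S}D_0^{\alpha(t)}(\psi_\alpha*f)=\phi_\alpha*(u-u_0)'=\phi_\alpha*u'={}^{S}D_0^{\alpha(t)}u
\]
a.e. on $(0,T)$. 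This is exactly \eqref{eq:abstract_semi_scarpi}.

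The only delicate point is the bookkeeping of regularity hypotheses: $f\in L^1(0,T;X)$, $\psi_\alpha*f\in W^{1,1}(0,T;X)$ with zero initial value, and the passage from an a.e. identity to a pointwise one. All of these are handled by the assumed regularity $u\in C([0,T];D(\Aa))\cap W^{1,1}(0,T;X)$ together with the local Lipschitz continuity of $\Nn$. No resolvent estimates are needed at this stage.
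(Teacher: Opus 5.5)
Your proposal is correct and follows the paper's route. The paper obtains this proposition directly from the two inversion identities of Proposition~\ref{prop:scarpi-kernels} applied with $f=\Aa u+\Nn(u)$. Your argument spells out the regularity bookkeeping the paper leaves implicit: $f\in C([0,T];X)$, $\psi_\alpha*f=u-u_0\in W^{1,1}(0,T;X)$ vanishing at $t=0$, and continuity to upgrade the a.e.\ identity to a pointwise one.
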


We say that a solution $u\in C([0,T];D(\Aa))$ is a Volterra solution if it satisfies \eqref{eq:volterra-nonlinear}. 
\subsection{Linear solution families}
Choose \(\theta\in(\pi/2,\pi/2+\delta_\Sigma)\) as in Corollary~\ref{cor:scarpi-cq-sector-bounds} and \(\kappa>0\).
For \(t>0\), define
\begin{align}
\label{eq:P-family}
P(t)&=\frac{1}{2\pi i}\int_{\Gamma_{\theta,\kappa}}
e^{zt}(\Psi_\alpha(z)^{-1}\Ii-\Aa)^{-1}\,dz,\\
\label{eq:S-family}
S(t)&=\frac{1}{2\pi i}\int_{\Gamma_{\theta,\kappa}}
e^{zt}\frac{\Psi_\alpha(z)^{-1}}{z}
(\Psi_\alpha(z)^{-1}\Ii-\Aa)^{-1}\,dz.
\end{align}
The sectorial estimates established in Section~\ref{sec:preliminaries} guarantee that the integrands are analytic in the annular sectors bounded by these contours, so the integral values are independent of \(\kappa\).
By choosing $\kappa$ sufficiently large, the operators $P$ and $S$ coincide with the families introduced in \cite{YZ2026Scarpi}, because their corresponding symbols and the selected branches of the logarithm agree.

\begin{lemma}
\label{lem:S-resolvent-identity}
For every \(T>0\), $P\in C((0,\infty);\mathcal L(X))\cap L^1((0,T);\mathcal L(X))$ and
\begin{equation}
\label{eq:P-local-bound}
\|P(t)\|_{\mathcal L(X)}\le C_Tt^{\alpha_1-1},
\qquad 0<t\le T,
\end{equation}
where $C_T>0$.
The family \(S\) is norm continuous for \(t>0\), locally uniformly bounded, and strongly continuous at zero with \(S(0)=\Ii\).
For \(\Re z\) sufficiently large,
\begin{equation}
\label{eq:PS-Laplace-transforms}
\widehat P(z)=(\Psi_\alpha(z)^{-1}\Ii-\Aa)^{-1},
\qquad
\widehat S(z)=\frac{\Psi_\alpha(z)^{-1}}{z}
(\Psi_\alpha(z)^{-1}\Ii-\Aa)^{-1}.
\end{equation}
For \(x\in D(\Aa)\) and \(t\ge0\),
\[S(t)x\in D(\Aa),\qquad \Aa S(t)x=S(t)\Aa x,\]
and
\begin{align}
\label{eq:S-P-identity}
S(t)x&=x+\int_0^tP(s)\Aa x\,ds,\\
\label{eq:S-volterra-resolvent}
S(t)x&=x+\int_0^t\psi_\alpha(t-s)\Aa S(s)x\,ds.
\end{align}
Moreover, for every \(T>0\), \(P(t)X\subset D(\Aa)\) and \(S'(t)=\Aa P(t)\) in \(\mathcal L(X)\), with
\[
\|\Aa P(t)\|_{\mathcal L(X)}+\|S'(t)\|_{\mathcal L(X)}
\le C_Tt^{-1},\qquad 0<t\le T.
\]
The linear estimates hold uniformly under Fourier truncation.
\end{lemma}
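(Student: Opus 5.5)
The plan is to work directly from the contour representations \eqref{eq:P-family}--\eqref{eq:S-family} and the sector bounds of Corollary~\ref{cor:scarpi-operator-transfer-bounds}. Those bounds hold on $\Sigma_{\pi/2+\delta_\Sigma}$, and $\theta<\pi/2+\delta_\Sigma$, so both integrands are analytic on the region between any two contours $\Gamma_{\theta,\kappa}$, $\Gamma_{\theta,\kappa'}$. On the rays $z=re^{\pm i\theta}$ we have $|e^{zt}|=e^{rt\cos\theta}$ with $\cos\theta<0$, which gives absolute convergence for $t>0$.

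\emph{Step 1: the local bound for $P$.} For fixed $0<t\le T$ I take $\kappa=1/t$; this is allowed because the integral does not depend on $\kappa$. For $t\le1$ we have $|z|\ge1$ on the whole contour, so \eqref{eq:operator-transfer-P-bounds} gives $\|\Rr(z)\|\le C|z|^{-\alpha_1}$. The rays then contribute
\[
\int_{1/t}^\infty e^{rt\cos\theta}r^{-\alpha_1}\,dr\le Ct^{\alpha_1-1},
\]
and the arc contributes $C\,t^{-1}\cdot t^{\alpha_1}$. For $1\le t\le T$ the contour passes through $|z|<1$, where the bound is $|z|^{-\alpha_2}$. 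That part gives at most $C\max(t^{\alpha_2-1},1)$, which is bounded by $C_T t^{\alpha_1-1}$ on $[1,T]$. This yields \eqref{eq:P-local-bound}, and since $\alpha_1>0$ it also gives $L^1(0,T)$. Norm continuity on $(0,\infty)$ follows by dominated convergence on a fixed contour, using the uniform decay for $t\ge t_0>0$.

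For $S$ I use the same splitting together with \eqref{eq:operator-transfer-S-bounds}. The integrand is then $O(|z|^{-1})$, and the arc at radius $1/t$ gives an $O(1)$ contribution. So $S$ is locally bounded and norm continuous for $t>0$.

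\emph{Step 2: Laplace transforms and the identities.} For $\Re\lambda$ larger than the abscissa, I apply Fubini and use $\int_0^\infty e^{-(\lambda-z)t}\,dt=(\lambda-z)^{-1}$. I then close the contour to the right, where the integrand decays because $\|\Rr(z)\|\to0$. Cauchy's formula gives \eqref{eq:PS-Laplace-transforms}; alternatively one can cite the agreement with \cite{YZ2026Scarpi}.

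Next comes the algebraic identity
\[
\frac{\Psi_\alpha^{-1}}{z}\Rr(z)=\frac1z\bigl[\Ii+\Rr(z)\Aa\bigr]\quad\text{on }D(\Aa).
\]
Reading it in the Laplace domain gives $\widehat S x=x/z+\widehat P\Aa x/z$, and Laplace uniqueness then yields \eqref{eq:S-P-identity}. Commutation of $\Aa$ with $S(t)$ follows because $\Aa$ is closed and commutes with $\Rr(z)$. The resolvent identity also gives $\Rr(z)=\Psi_\alpha(z)\bigl(\Ii+\Aa\Rr(z)\bigr)$. Combining this with $\widehat\psi_\alpha=\Psi_\alpha$ and the convolution theorem gives
\[
\widehat S x=\frac{x}{z}+\Psi_\alpha\,\Aa\,\widehat Sx,
\]
which is \eqref{eq:S-volterra-resolvent}. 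The value $S(0)=\Ii$ and strong continuity at zero come from \eqref{eq:S-P-identity} and the integrability of $P$ on the dense set $D(\Aa)$, combined with the local uniform bound on $S$.

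\emph{Step 3: the $t^{-1}$ bound.} From
\[
\Aa\Rr(z)=\Psi_\alpha(z)^{-1}\Rr(z)-\Ii
\]
and \eqref{eq:operator-transfer-S-bounds}, the operator $\Aa\Rr(z)$ is bounded uniformly on the sector. The constant term $-\Ii$ integrates to zero against $e^{zt}$ for $t>0$, by closing the contour to the left. So $\Aa P(t)$ is given by the contour integral of $e^{zt}\Aa\Rr(z)$. Since $\Aa$ is closed, $P(t)X\subset D(\Aa)$. With $\kappa=1/t$ the estimate gives $\int_{1/t}^\infty e^{rt\cos\theta}\,dr\sim t^{-1}$, plus an arc term of size $t^{-1}$.

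Differentiating \eqref{eq:S-P-identity} first gives $S'(t)x=P(t)\Aa x=\Aa P(t)x$ on $D(\Aa)$. Because $\Aa P(t)$ is bounded for each $t>0$, this extends to $S'(t)=\Aa P(t)$ in $\mathcal L(X)$.

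\emph{Grid uniformity.} All of the constants above come only from Corollary~\ref{cor:scarpi-operator-transfer-bounds}, and that corollary holds uniformly for Fourier truncations. So the linear estimates are uniform in the grid.

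\emph{Main obstacle.} The hardest part is keeping the contour choice $\kappa=1/t$ compatible with the two-regime bounds when $t>1$, and justifying the contour deformations. I expect the latter to be handled by the analyticity in $\Sigma_{\pi/2+\delta_\Sigma}$ together with the fact that $z=0$ is only an integrable singularity, since $\|\Rr(z)\|\lesssim|z|^{-\alpha_2}$ there.
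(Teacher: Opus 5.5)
Your proposal is correct. For the final part it is the paper's argument: graph-norm convergence of the $P$-integral via $\Aa\Rr(z)=\Psi_\alpha(z)^{-1}\Rr(z)-\Ii$, closedness of $\Aa$, $\int_{\Gamma_{\theta,\kappa}}e^{zt}\,dz=0$, and the contour $\Gamma_{\theta,t^{-1}}$ for the $t^{-1}$ bound.

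The two routes differ on everything up to \eqref{eq:S-volterra-resolvent}. The paper does not prove these statements here. It imports them from \cite{YZ2026Scarpi}, after noting that for large $\kappa$ the contour-defined families coincide with the ones constructed there. You derive them directly:
\begin{itemize}
\item the scaling $\kappa=1/t$ with the two-regime bounds of Corollary~\ref{cor:scarpi-operator-transfer-bounds} gives \eqref{eq:P-local-bound} and the local bound on $S$;
\item Fubini together with closing the contour to the right gives \eqref{eq:PS-Laplace-transforms};
\item the identities $\Psi_\alpha^{-1}\Rr=\Ii+\Rr\Aa$ on $D(\Aa)$ and $\Rr=\Psi_\alpha(\Ii+\Aa\Rr)$, with $\widehat\psi_\alpha=\Psi_\alpha$ and Laplace uniqueness, give \eqref{eq:S-P-identity} and \eqref{eq:S-volterra-resolvent}.
\end{itemize}
What your route buys is a self-contained proof. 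In it, grid uniformity of all the linear estimates is visibly a consequence of Corollary~\ref{cor:scarpi-operator-transfer-bounds} alone. The paper's route is shorter, but it leaves implicit that the cited constants depend only on those sector bounds.

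One step needs tightening: the claim $S'(t)=\Aa P(t)$ in $\mathcal L(X)$. The paper differentiates \eqref{eq:S-family} under the integral sign on compact subintervals of $(0,\infty)$. This gives a norm derivative that is literally the contour formula for $\Aa P(t)$.

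Your argument instead differentiates \eqref{eq:S-P-identity} on $D(\Aa)$ and then says the result extends because $\Aa P(t)$ is bounded. As stated, that does not upgrade a strong derivative on a dense subspace to a norm derivative. To repair it:
\begin{itemize}
\item write $S(t)-S(s)=\int_s^t\Aa P(r)\,dr$ on $D(\Aa)$ for $0<s<t$;
\item extend this identity to all of $X$ by density, since both sides are bounded operators;
\item use norm continuity of $r\mapsto\Aa P(r)$ on $(0,\infty)$, which follows from its contour representation by dominated convergence.
\end{itemize}
Alternatively, simply differentiate \eqref{eq:S-family} directly as the paper does.
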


\begin{proof}
The statements through \eqref{eq:S-volterra-resolvent} follow from
\cite{YZ2026Scarpi}.
For the remaining statements, the resolvent identity
\[
\Aa(\Psi_\alpha(z)^{-1}\Ii-\Aa)^{-1}
=\Psi_\alpha(z)^{-1}(\Psi_\alpha(z)^{-1}\Ii-\Aa)^{-1}-\Ii
\]
and \eqref{eq:operator-transfer-S-bounds} show that the integral in \eqref{eq:P-family}, applied to any \(x\in X\), converges in
\(D(\Aa)\) with its graph norm. Closedness of \(\Aa\) and
\(\int_{\Gamma_{\theta,\kappa}}e^{zt}\,dz=0\) give
\[
\Aa P(t)=\frac{1}{2\pi i}\int_{\Gamma_{\theta,\kappa}}
e^{zt}\Psi_\alpha(z)^{-1}(\Psi_\alpha(z)^{-1}\Ii-\Aa)^{-1}\,dz.
\]
Differentiation of \eqref{eq:S-family} on compact time intervals in \((0,\infty)\) gives \(S'(t)=\Aa P(t)\).
On \(\Gamma_{\theta,t^{-1}}\), the uniformly bounded operator
factor gives ray and arc estimates of order \(t^{-1}\).
All these constants are uniform under Fourier truncation by Lemma~\ref{lem:fourier-resolvent}.
\end{proof}

For \(u_0\in X\) and \(f\in C([0,T];X)\), the linear mild solution is
\[
u(t)=S(t)u_0+\int_0^tP(t-s)f(s)\,ds.
\]
This formula defines a continuous \(X\)-valued function. For
\(u\in C([0,T];D(\Aa))\), it is equivalent to
\(u=u_0+\psi_\alpha*(\Aa u+f)\).
Indeed, \eqref{eq:PS-Laplace-transforms} gives this equivalence for each Fourier coefficient by the scalar Volterra resolvent identities and uniqueness of the scalar Volterra equation.
Both sides of the vector-valued Volterra equation belong to
\(C([0,T];X)\), so equality of their Fourier coefficients proves
the identity in \(X\). This also proves the converse implication.

\subsection{Well-posedness and Temporal regularity}
A mild solution of \eqref{eq:abstract_semi_scarpi} is a function
\(u\in C([0,T];X)\) satisfying
\begin{equation}
\label{eq:semilinear-mild-solution}
u(t)=S(t)u_0+\int_0^tP(t-s)\Nn(u(s))\,ds.
\end{equation}

\begin{theorem}
\label{thm:maximal-blowup}
For every \(u_0\in X\), there is a unique maximal mild solution
\[
u\in C([0,T_{\max});X),\qquad 0<T_{\max}\le\infty.
\]
Two mild solutions with the same initial value agree on every common interval of existence. Every mild solution on a closed interval
\([0,T_0]\) extends uniquely beyond \(T_0\), with the convolution in
\eqref{eq:semilinear-mild-solution} still starting at zero.
If \(T_{\max}<\infty\), then
\[
\limsup_{t\uparrow T_{\max}}\|u(t)\|_X=\infty.
\]
\end{theorem}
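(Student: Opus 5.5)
The proof is a standard Banach fixed-point argument for the Volterra-type mild formulation \eqref{eq:semilinear-mild-solution}. It uses the linear bounds of Lemma~\ref{lem:S-resolvent-identity}: $\|S(t)\|_{\mathcal L(X)}\le M_T$ and $\|P(t)\|_{\mathcal L(X)}\le C_Tt^{\alpha_1-1}$ on $(0,T]$.

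\emph{Step 1: local existence.} Fix $T_1>0$ and $R=2M_{T_1}\|u_0\|_X+1$. Let $L_R$ and $K_R=\sup_{\|v\|\le R}\|\Nn(v)\|_X\le \|\Nn(0)\|_X+L_RR$ be the Lipschitz constant and bound of $\Nn$ on the ball $B_R\subset X$. On the closed set $E_\tau=\{v\in C([0,\tau];X):\|v(t)\|_X\le R\}$ define
\[
(\Phi v)(t)=S(t)u_0+\int_0^tP(t-s)\Nn(v(s))\,ds .
\]
Continuity of $\Phi v$ follows from strong continuity of $S$ (with $S(0)=\Ii$) and from $P\in L^1(0,T;\mathcal L(X))$ together with norm continuity on $(0,\infty)$, by the usual splitting of the convolution. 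The bound $\int_0^t\|P(s)\|\,ds\le C_T t^{\alpha_1}/\alpha_1$ gives $\|\Phi v(t)\|\le M\|u_0\|+K_RC\tau^{\alpha_1}/\alpha_1\le R$ and $\|\Phi v-\Phi w\|_\infty\le L_RC\tau^{\alpha_1}/\alpha_1\,\|v-w\|_\infty$. For $\tau$ small, depending only on $\|u_0\|_X$ through $R$, $\Phi$ is therefore a contraction. Uniqueness on any common interval follows from a Gronwall-type argument for weakly singular kernels. Let $\tau^*$ be the first time two mild solutions $u,v$ differ. Both stay in some ball $B_{R'}$ on $[0,\tau^*+\epsilon]$ by continuity, so $\|u-v\|_{L^\infty(\tau^*,\tau^*+\epsilon)}\le L_{R'}C\epsilon^{\alpha_1}\alpha_1^{-1}\|u-v\|_{L^\infty(\tau^*,\tau^*+\epsilon)}$. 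For small $\epsilon$ this forces $u=v$ there, a contradiction.

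\emph{Step 2: continuation.} This is the main obstacle, because the problem is nonlocal in time: no restart with new initial data $u(T_0)$ is available, since $S,P$ are not semigroups. Given a mild solution $u$ on $[0,T_0]$, I will seek an extension $v$ on $[0,T_0+\tau]$ with $v=u$ on $[0,T_0]$ and, for $t\in[T_0,T_0+\tau]$,
\[
v(t)=\underbrace{S(t)u_0+\int_0^{T_0}P(t-s)\Nn(u(s))\,ds}_{=:g(t)}+\int_{T_0}^{t}P(t-s)\Nn(v(s))\,ds .
\]
The history term $g$ lies in $C([T_0,T_0+\tau];X)$, by the same continuity argument with $u$ fixed, and $g(T_0)=u(T_0)$. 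I then run the same contraction on $\{v\in C([T_0,T_0+\tau];X): v(T_0)=u(T_0),\ \|v(t)-g(t)\|\le 1\}$. The radius is $R=\sup_{[T_0,T_0+1]}\|g\|+1$, and the step $\tau$ is chosen from $\int_0^\tau\|P\|\le C\tau^{\alpha_1}/\alpha_1$ with the local constants $L_R,K_R$. Gluing the two pieces gives a mild solution on $[0,T_0+\tau]$ whose convolution starts at zero, and Step 1 gives its uniqueness. Zorn's lemma, or simply taking the union of all solution intervals (consistent by uniqueness), yields the maximal solution on $[0,T_{\max})$.

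\emph{Step 3: blow-up alternative.} Suppose $T_{\max}<\infty$ and $\sup_{[0,T_{\max})}\|u(t)\|_X\le R<\infty$. Then $\|\Nn(u(s))\|\le K_R$. For $t<t'<T_{\max}$ I estimate $u(t')-u(t)$ by splitting into four parts. The first is $(S(t')-S(t))u_0$. The second is $\int_t^{t'}\|P(t'-s)\|K_R\,ds\lesssim (t'-t)^{\alpha_1}$. The third is $\int_0^{t-\delta}\|P(t'-s)-P(t-s)\|K_R\,ds$, which is small because $P$ is uniformly norm continuous on $[\delta,T_{\max}]$. The fourth is $\int_{t-\delta}^{t}(\|P(t'-s)\|+\|P(t-s)\|)K_R\,ds\lesssim\delta^{\alpha_1}$. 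Hence $u$ is uniformly continuous and extends continuously to $[0,T_{\max}]$, the mild equation holds at $T_{\max}$ by dominated convergence, and Step 2 extends it beyond $T_{\max}$. This contradicts maximality. Therefore $\sup\|u\|=\infty$; since $u$ is continuous on every $[0,T]$ with $T<T_{\max}$, the $\limsup$ as $t\uparrow T_{\max}$ is infinite.
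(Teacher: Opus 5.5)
Your proof is correct. It is the standard contraction, full-history continuation (with the frozen history term $S(t)u_0+\int_0^{T_0}P(t-s)\Nn(u(s))\,ds$) and blow-up argument that the paper does not write out but imports from \cite{YZ2026Scarpi}, using the same ingredients it lists: local boundedness and strong continuity of $S$, local integrability of $P$, and the Lipschitz property of $\Nn$ on bounded sets. One point to tighten is the passage to the limit at $T_{\max}$: $(t-s)^{\alpha_1-1}$ has no fixed integrable majorant as $t\uparrow T_{\max}$, so you should justify that limit by $L^1$-translation continuity of $P$ (or the same splitting as in your uniform-continuity estimate) rather than plain dominated convergence; this also shows that local integrability of $P$, as in the paper, suffices without norm continuity.
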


\begin{proof}
Lemma~\ref{lem:S-resolvent-identity} gives the strong continuity and local boundedness of \(S\), together with the local integrability of
\(P\). 
Assumption~\ref{ass:scarpi-resolvent} supplies the Lipschitz condition on bounded subsets of \(X\). The conclusions therefore follow from \cite{YZ2026Scarpi}.
\end{proof}

After establishing the existence of a maximal mild solution, we next give the regularity of the solution and its derivative, which is needed for CQ error analysis.

\begin{theorem}
\label{thm:semilinear-regularity}
Assume that \(\Nn\) is continuously Fr\'echet differentiable and that \(\Nn'\) is Lipschitz on bounded subsets of \(X\).
Let \(u_0\in D(\Aa)\), and let \(u\) be the maximal mild solution
from Theorem~\ref{thm:maximal-blowup}. For every \(T<T_{\max}\),
\[
u\in C([0,T];D(\Aa))\cap W^{1,1}(0,T;X),
\]
its derivative is continuous on \((0,T]\), and
\begin{equation}
\label{eq:semilinear-derivative-bound}
\|u'(t)\|_X\le C_Tt^{\alpha_1-1},\qquad 0<t\le T.
\end{equation}
In particular, \(u\) is H\"older continuous of order \(\alpha_1\) in \(X\), satisfies \eqref{eq:volterra-nonlinear}, and is a strong solution of \eqref{eq:abstract_semi_scarpi} on \((0,T)\).
\end{theorem}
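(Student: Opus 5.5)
Fix \(T<T_{\max}\) and set \(R=\sup_{0\le t\le T}\|u(t)\|_X<\infty\), so that \(\Nn\) and \(\Nn'\) are Lipschitz with a common constant \(L_R\) on the ball of radius \(R\) (enlarged slightly so it also contains the difference quotients below). The plan is to differentiate the mild formulation \eqref{eq:semilinear-mild-solution} in time. Since \(u_0\in D(\Aa)\), identity \eqref{eq:S-P-identity} gives \(S(t)u_0=u_0+\int_0^tP(s)\Aa u_0\,ds\), and hence \(\frac{d}{dt}S(t)u_0=P(t)\Aa u_0\) with \(\|P(t)\Aa u_0\|_X\le C_Tt^{\alpha_1-1}\) by \eqref{eq:P-local-bound}. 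For the convolution term, write \(\int_0^tP(t-s)\Nn(u(s))\,ds=\int_0^tP(s)\Nn(u(t-s))\,ds\). A formal differentiation then yields the linear Volterra equation
\[
v(t)=P(t)\bigl[\Aa u_0+\Nn(u_0)\bigr]+\int_0^tP(t-s)\Nn'(u(s))v(s)\,ds
\]
for the candidate derivative \(v=u'\).

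\emph{Step 1 (the derivative equation).} I would solve this equation for \(v\) in the weighted space \(\{v\in C((0,T];X):\sup_t t^{1-\alpha_1}\|v(t)\|_X<\infty\}\). The forcing term lies in this space by \eqref{eq:P-local-bound}. The operator \(v\mapsto P*(\Nn'(u)v)\) is bounded on the space, because \(\int_0^t(t-s)^{\alpha_1-1}s^{\alpha_1-1}\,ds=B(\alpha_1,\alpha_1)t^{2\alpha_1-1}\). Using a Gronwall-type (Henry) lemma, or equivalently the Neumann series in which the \(k\)-th iterate gains a factor \(\Gamma(\alpha_1)^k/\Gamma(k\alpha_1)\), one obtains a unique solution satisfying \(\|v(t)\|_X\le C_Tt^{\alpha_1-1}\). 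Continuity on \((0,T]\) follows from the norm continuity of \(P\) on \((0,\infty)\).

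\emph{Step 2 (identifying \(v=u'\)).} This is the main obstacle. I would work with the difference quotients \(w_h(t)=(u(t+h)-u(t))/h\), obtained by subtracting the mild formulas at \(t+h\) and \(t\) after the change of variables. Using the Lipschitz property of \(\Nn'\) and the \(\alpha_1\)-Hölder continuity of \(u\) (which follows from the same type of estimate applied first to \(u\) itself), the remainder \(\Nn(u(s+h))-\Nn(u(s))-\Nn'(u(s))(u(s+h)-u(s))\) is \(O(|u(s+h)-u(s)|^2)\). Consequently \(w_h\) satisfies the derivative equation up to errors that vanish in the weighted norm as \(h\to0\), away from \(t=0\), together with a boundary contribution \(\frac1h\int_0^hP(t+h-s)\Nn(u(s))\,ds\to P(t)\Nn(u_0)\). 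The same Gronwall argument then gives \(w_h\to v\) locally uniformly on \((0,T]\). As a fallback, if this becomes delicate, I would carry out the argument mode-by-mode with Fourier truncation, using the grid-uniform constants of Lemma~\ref{lem:S-resolvent-identity}, and then pass to the limit. Once \(u'=v\) is established, \eqref{eq:semilinear-derivative-bound} holds, \(u\in W^{1,1}(0,T;X)\) since \(t^{\alpha_1-1}\) is integrable, and integrating \eqref{eq:semilinear-derivative-bound} gives Hölder continuity of order \(\alpha_1\).

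\emph{Step 3 (\(D(\Aa)\)-regularity and the strong form).} To show \(u\in C([0,T];D(\Aa))\), I would apply \(\Aa\) to the mild formula. The term \(\Aa S(t)u_0=S(t)\Aa u_0\) is continuous. For \(\Aa\int_0^tP(t-s)f(s)\,ds\) with \(f=\Nn(u)\), note that \(f\) is Lipschitz-bounded in time with \(\|f'(s)\|\le Cs^{\alpha_1-1}\). Writing
\[
\int_0^tP(t-s)f(s)\,ds=\int_0^tP(t-s)[f(s)-f(t)]\,ds+\int_0^tP(r)\,dr\,f(t),
\]
I would use \(\Aa\int_0^tP=S(t)-\Ii\), which follows from \eqref{eq:S-P-identity} and closedness, together with \(\|\Aa P(t-s)\|\le C(t-s)^{-1}\) from Lemma~\ref{lem:S-resolvent-identity}. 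Combined with the Hölder bound \(\|f(s)-f(t)\|\le C|t-s|^{\alpha_1}\), this makes the first integral absolutely convergent in \(D(\Aa)\), and continuity in \(t\) follows. Finally, the equivalence stated after Lemma~\ref{lem:S-resolvent-identity} gives \eqref{eq:volterra-nonlinear}, and Proposition~\ref{prop:scarpi-volterra-form} shows that \(u\) is a strong solution of \eqref{eq:abstract_semi_scarpi}.
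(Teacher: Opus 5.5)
Your Steps 1 and 3 are sound. Step 3 is essentially the paper's cancellation identity \eqref{eq:graph-cancellation}. Step 2, which you rightly call the main obstacle, does not close as written.

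The only a priori modulus of continuity you have is $\|u(s+h)-u(s)\|_X\le Ch^{\alpha_1}$. So the quadratic Taylor remainder $\Nn(u(s+h))-\Nn(u(s))-\Nn'(u(s))(u(s+h)-u(s))$, divided by $h$, is only $O(h^{2\alpha_1-1})$. This does not tend to zero when $\alpha_1\le 1/2$, which the theorem allows. Restricting to $t$ away from $0$ does not help, because a better modulus of continuity there is exactly what is being proved. Bounding the remainder instead by $O(h\|w_h(s)\|_X^2)$ presupposes a uniform bound on $w_h$ that you have not derived. Even $\|w_h(s)\|_X\le Cs^{\alpha_1-1}$ would only give $Chs^{2\alpha_1-2}$, which is not integrable at $s=0$ for $\alpha_1\le 1/2$. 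So the claim that $w_h$ satisfies the derivative equation ``up to errors that vanish as $h\to0$'' is not justified in general.

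The missing idea is to keep the difference-quotient equation linear in $w_h$ and to get a uniform bound before passing to the limit. Set $\mathcal M_h(s)=\int_0^1\Nn'\bigl(u(s)+\theta(u(s+h)-u(s))\bigr)\,d\theta$. Then $w_h=F_h+P*(\mathcal M_hw_h)$ holds exactly, where
$F_h(t)=\frac1h\int_t^{t+h}P(r)\Aa u_0\,dr+\frac1h\int_0^hP(t+h-\sigma)\Nn(u(\sigma))\,d\sigma$
satisfies $\|F_h(t)\|_X\le Ct^{\alpha_1-1}$ uniformly in $h$. The argument then runs as follows:
\begin{itemize}
\item Lemma~\ref{lem:weakly-singular-gronwall} gives $\|w_h(t)\|_X\le Ct^{\alpha_1-1}$ uniformly in $h$.
\item Hence $\|(\mathcal M_h(s)-\Nn'(u(s)))w_h(s)\|_X\le Ch^{\alpha_1}s^{\alpha_1-1}$, with one factor controlled by H\"older continuity and the other by the uniform bound.
\item A second Gronwall argument with dominated convergence gives $w_h(t)\to v(t)$ for each $t>0$.
\end{itemize}

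The paper sidesteps difference quotients altogether. It solves the nonlinear equation $v=Pg_0+P*\bigl(\Nn'(u_0+1*v)\,v\bigr)$ by a local contraction. It integrates this using the chain rule, Fubini and \eqref{eq:S-P-identity} to see that $u_0+1*v$ is a mild solution, and identifies it with $u$ by uniqueness. The cost is a full-history continuation argument to reach $T$.

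Your linear equation avoids that continuation, and the same integration trick works for it. For $\tilde u=u_0+1*v$ one finds $\tilde u-u=P*[\Nn(\tilde u)-\Nn(u)]+P*\bigl(1*[(\Nn'(u)-\Nn'(\tilde u))v]\bigr)$. Since $\|v(r)\|_X\le Cr^{\alpha_1-1}$ is integrable, Lemma~\ref{lem:weakly-singular-gronwall} with $a=0$, applied to $\sup_{[0,t]}\|\tilde u-u\|_X$, yields $\tilde u=u$. This identification is cleaner than the difference-quotient route.
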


This result can be directly extended to the case ${}^{S}D_0^{\alpha(t)}u=\Aa u+\Nn(u)+f(t)$. 
The mild formula contains the additional term \(P*f\).  For \(f\in C([0,\infty);X)\), local and maximal well-posedness
follow from the same contraction and full-history continuation, with the continuous prescribed term \(S(\cdot)u_0+P*f\).
If, in addition, $f\in W^{1,1}(0,T;X)$ and $\|f'(t)\|_X\le Ct^{\alpha_1-1}$ almost everywhere,
then \eqref{eq:P-local-bound} yields
$\|(P*f')(t)\|_X\le C_Tt^{\alpha_1-1}$.
The derivative reconstruction and \eqref{eq:graph-cancellation} therefore give the same temporal regularity for $T<T_{\max}$.

\section{BE-CQ time stepping and Fourier pseudospectral solver}
\label{sec:be-cq}
We apply backward Euler convolution quadrature (BE--CQ) to
\eqref{eq:volterra-nonlinear}, using the Laplace symbol
\(\Psi_\alpha\) directly
\cite{Lubich1988I,Lubich1988II,GarrappaGiusti2023}.
The continuous formulation and solution families were established in the preceding section using \cite{YZ2026Scarpi}.

\subsection{BE-CQ scheme}

Let $t_n=nh,n=0,1,\dots,N$ with $T=Nh$. We denote the generating polynomial of the backward differentiation formulas (BDFs) by $\delta(\zeta)$. For the corresponding backward Euler (BE) symbol \(\delta(\zeta)=1-\zeta\), define
\[
\Psi_h(\zeta)
:=\Psi_\alpha\!\left(\frac{1-\zeta}{h}\right)
=\left(\frac{1-\zeta}{h}\right)^{-B((1-\zeta)/h)}
=\sum_{n=0}^{\infty}\omega_n\zeta^n,
\qquad |\zeta|<1,
\]
using the principal logarithm fixed in Section~\ref{sec:preliminaries}.
Since \(\Re((1-\zeta)/h)>0\), the symbol and operator estimates in
Corollaries~\ref{cor:scarpi-cq-sector-bounds} and
\ref{cor:scarpi-operator-transfer-bounds} apply under
\eqref{eq:scarpi-explicit-admissibility}, with constants independent
of \(h\) and the Fourier grid. In particular,
\[
\omega_0=\Psi_\alpha(1/h)=h^{B(1/h)}>0,
\qquad
\omega_0=h^{\alpha_1}\bigl(1+O(h|\log h|)\bigr).
\]

Let $u^0=u_0$, then for \(1\le n\le N\), the BE-CQ discretization of \eqref{eq:volterra-nonlinear} reads
\begin{equation}
\label{eq:pde-cq-volterra}
u^n
=
u_0+\sum_{j=0}^{n}\omega_{n-j}
\bigl(\Aa u^j+\Nn(u^j)\bigr).
\end{equation}
Thus, when advancing from \(t_{n-1}\) to \(t_{n}\), the known history part is
\begin{equation}
\label{eq:pde-history}
H_u^n
:=
u_0+\sum_{j=0}^{n-1}\omega_{n-j}
\bigl(\Aa u^j+\Nn(u^j)\bigr),
\end{equation}
and the current step satisfies
\(
u^{n}
=
H_u^n+\omega_0\bigl(\Aa u^{n}+\Nn(u^{n})\bigr).
\)

\subsection{Cauchy--FFT computation of the BE-CQ weights}
\label{subsec:cauchy-fft}
Since \(\omega_n\) are Taylor coefficients of \(\Psi_h\), Cauchy's formula
on a circle \(|\zeta|=\rho<1\) gives
\[
\omega_n
=
\frac{1}{2\pi i}
\int_{|\zeta|=\rho}
\frac{\Psi_h(\zeta)}{\zeta^{n+1}}\,d\zeta
=
\frac{\rho^{-n}}{2\pi}
\int_0^{2\pi}
\Psi_h(\rho e^{i\theta})e^{-in\theta}\,d\theta .
\]
For an integer \(J_{\rm FFT}>N\), applying the \(J_{\rm FFT}\)-point trapezoidal rule  gives
\begin{equation}
\label{eq:pde-fft-weights}
\check\omega_n
=
\frac{\rho^{-n}}{J_{\rm FFT}}
\sum_{\ell=0}^{J_{\rm FFT}-1}
\Psi_h\!\left(\rho e^{2\pi i\ell/J_{\rm FFT}}\right)
e^{-2\pi i n\ell/J_{\rm FFT}},
\qquad 0\le n\le N.
\end{equation}
All weights $\{\check\omega_n\}_{n=0}^N$ are thus computed via FFT and are real in exact arithmetic by conjugate symmetry. 

This approach essentially adapts the coefficient-recovery method developed in \cite{GarrappaGiusti2023} for exponential-type Scarpi kernels, with the BE symbol replacing the continuous Laplace variable in Lubich's CQ construction. To compute the resulting BE--CQ weights, we directly employ the Cauchy--FFT procedure and the associated error estimates established by Garrappa and Giusti.

\begin{proposition}\label{prop:pde-cauchy-fft}
Let \(0<\rho<\rho_r<1\), and assume that \(\Psi_h\) is analytic in
\(|\zeta|\le \rho_r\). Then for \(0\le n\le J_{\rm FFT}-1\),
\[
\left|\omega_n-\check\omega_n\right|
\le
\frac{C_{\Psi,r}\rho^{-n}}{(\rho_r/\rho)^{J_{\rm FFT}}-1},
\qquad
C_{\Psi,r}:=\max_{|\zeta|=\rho_r}|\Psi_h(\zeta)|.
\]
\end{proposition}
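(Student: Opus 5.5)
The plan is the classical aliasing argument for the trapezoidal rule applied to Taylor coefficients. Since \(\Psi_h\) is analytic in \(|\zeta|\le\rho_r\), its Taylor series \(\Psi_h(\zeta)=\sum_{m\ge0}\omega_m\zeta^m\) converges absolutely and uniformly on \(|\zeta|\le\rho_r\), hence on the circle \(|\zeta|=\rho\). I will substitute this series into the definition \eqref{eq:pde-fft-weights} of \(\check\omega_n\) and interchange the finite sum over \(\ell\) with the series over \(m\), which is justified by absolute convergence. The discrete orthogonality relation
\[
\frac{1}{J_{\rm FFT}}\sum_{\ell=0}^{J_{\rm FFT}-1}e^{2\pi i(m-n)\ell/J_{\rm FFT}}
=\begin{cases}1,& m\equiv n \pmod{J_{\rm FFT}},\\ 0,&\text{otherwise},\end{cases}
\]
then yields the aliasing identity
\[
\check\omega_n=\sum_{k\ge0}\omega_{n+kJ_{\rm FFT}}\rho^{kJ_{\rm FFT}},
\qquad\text{so}\qquad
\check\omega_n-\omega_n=\sum_{k\ge1}\omega_{n+kJ_{\rm FFT}}\rho^{kJ_{\rm FFT}} .
\]
Here the restriction \(0\le n\le J_{\rm FFT}-1\) is used: it ensures that the only index \(m\ge0\) congruent to \(n\) with \(m\le n\) is \(m=n\) itself, so no negative indices appear.

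Next I apply the Cauchy estimate on the larger circle \(|\zeta|=\rho_r\), which is legitimate by the analyticity assumption:
\[
|\omega_m|\le C_{\Psi,r}\rho_r^{-m},\qquad m\ge0.
\]
Inserting this into the aliasing sum gives
\[
|\check\omega_n-\omega_n|\le C_{\Psi,r}\rho_r^{-n}\sum_{k\ge1}(\rho/\rho_r)^{kJ_{\rm FFT}}
= C_{\Psi,r}\rho_r^{-n}\frac{(\rho/\rho_r)^{J_{\rm FFT}}}{1-(\rho/\rho_r)^{J_{\rm FFT}}}
= \frac{C_{\Psi,r}\rho_r^{-n}}{(\rho_r/\rho)^{J_{\rm FFT}}-1}.
\]
Since \(\rho<\rho_r\), we have \(\rho_r^{-n}\le\rho^{-n}\), and this gives the stated bound. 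The geometric series converges precisely because \(\rho<\rho_r\).

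There is no real obstacle here. The only points needing care are the justification of the interchange of summations, which follows from absolute convergence of the Taylor series on \(|\zeta|=\rho<\rho_r\), and checking the index range in the aliasing identity. The analyticity hypothesis holds in our setting for any \(\rho_r<1\), because \(\Re((1-\zeta)/h)>0\) for \(|\zeta|<1\) and \(\Psi_\alpha\) is analytic off \((-\infty,0]\). The proposition itself, however, only requires the stated assumption, so this observation is not needed for the proof.
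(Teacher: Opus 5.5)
Your proof is correct and complete. The aliasing identity $\check\omega_n-\omega_n=\sum_{k\ge1}\omega_{n+kJ_{\rm FFT}}\rho^{kJ_{\rm FFT}}$, with $n\le J_{\rm FFT}-1$ excluding negative aliases, combined with Cauchy's estimate $|\omega_m|\le C_{\Psi,r}\rho_r^{-m}$, gives the bound directly.

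The paper gives no argument of its own here. It imports the error estimate for the Cauchy--FFT recovery from Garrappa and Giusti. The $\rho^{-n}$ in the stated bound is what the usual periodic trapezoidal-rule estimate produces if you treat $\Psi_h(\rho e^{i\theta})e^{-in\theta}$ as a periodic integrand, shift the contour to $|\zeta|=\rho_r$, and discard the factor $(\rho/\rho_r)^n$ there.

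Your route keeps that factor. You therefore get the sharper bound $C_{\Psi,r}\rho_r^{-n}/\bigl((\rho_r/\rho)^{J_{\rm FFT}}-1\bigr)$, from which the stated one follows because $\rho<\rho_r$. So your argument makes the section self-contained at no cost. The sharper form would also make the sufficient condition on $J_{\rm FFT}$ slightly less conservative.
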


\paragraph{Choice of $\rho$ and $J_{\rm FFT}$}
Given a tolerance \(\xi\), Proposition \ref{prop:pde-cauchy-fft} implies that,
with \(J_{\rm FFT}>N\), it is sufficient to choose
\[
J_{\rm FFT}
\ge
\frac{\log\!\left(C_{\Psi,r}\rho^{-N}+\xi\right)-\log\xi}
{\log \rho_r-\log\rho}
\]
to keep the contour quadrature error below \(\xi\) for \(0\le n\le N\). In
practice \(C_{\Psi,r}\) is estimated by sampling \(\Psi_h\) on \(|\zeta|=\rho_r\).
The radius \(\rho\) is chosen to balance coefficient amplification against floating
point round-off. Suppose that \(\varepsilon_{\rm mach}\) is machine precision and
\(0<F_s<1\) is a safety
factor, a common choice is
\[
\rho^{-N}C_{\Psi,r}\varepsilon_{\rm mach}=F_s\xi,
\qquad
\rho
=
\exp\!\left[
-\frac1N
\log\!\left(
\frac{F_s\xi}{C_{\Psi,r}\varepsilon_{\rm mach}}
\right)
\right],
\]
provided \(0<\rho<\rho_r<1\). For the numerical settings used below
and \(N\ge1\), if the balance ratio
\(F_s\xi/(C_{\Psi,r}\varepsilon_{\rm mach})\) is nonfinite or at most one,
the implementation first sets \(\rho=e^{-0.5/N}\).
It then clips \(\rho\) to \([10^{-14},0.999999]\).
If \(\rho\ge\rho_r\), it sets
\(\rho_r=\min\{(1+\rho)/2,0.999999\}\), reduces \(\rho\) to
\(0.8\rho_r\) if still necessary, and recomputes the sampled bound. The final weights are checked by recomputing them with
\(2J_{\rm FFT}\) quadrature points and requiring
\[
\frac{\max_{0\le n\le N}
 |\check\omega_n-\check\omega_n^{(2J_{\rm FFT})}|}
 {\max_{0\le n\le N}|\check\omega_n^{(2J_{\rm FFT})}|}
\le \xi_{\rm check}.
\]
Here \(\xi_{\rm check}>0\) is the prescribed a posteriori verification tolerance.
The exact Taylor coefficient remains denoted by \(\omega_n\), while \(\check\omega_n\) denotes the approximation produced by the \(J_{\rm FFT}\)-point Cauchy quadrature.

\subsection{Discrete Scarpi resolvent stability and temporal consistency}
\label{subsec:discrete-scarpi-stability}

Using \eqref{eq:PS-Laplace-transforms}, define the operator coefficients
\(
P_h^j,S_h^j\in\mathcal L(X)
\)
through
\begin{equation}
    \label{eq:discrete-PS-family}
    \begin{aligned}
\sum_{j=0}^{\infty}P_h^j\zeta^j
=\left[
\Psi_h(\zeta)^{-1}\Ii-\Aa
\right]^{-1},
\quad
\sum_{j=0}^{\infty}S_h^j\zeta^j
=\Psi_h(\zeta)^{-1}
\left[
\Psi_h(\zeta)^{-1}\Ii-\Aa
\right]^{-1}.
\end{aligned}
\end{equation}
These are exactly the BE substitutions \(z=(1-\zeta)/h\) in the continuous
transfer functions \(\widehat P(z)\) and \(z\widehat S(z)\), respectively.

\begin{lemma}
\label{lem:discrete-scarpi-resolvent}
Assume
condition~\eqref{eq:scarpi-explicit-admissibility}.  For fixed \(h_0,T>0\),
\(0<h\le h_0\), and \(t_j\le T\),
\begin{equation}
\label{eq:discrete-P-S-bound}
\|P_h^j\|_{\mathcal L(X)}
\le Ch\left(t_{j+1}^{\alpha_1-1}+t_{j+1}^{\alpha_2-1}\right),
\qquad
\|S_h^j\|_{\mathcal L(X)}
\le\frac{C}{j+1}.
\end{equation}
The constants are independent of \(h,j\) and the Fourier grid.
Consequently, if
\(t_n\le T\),
\begin{equation}
\label{eq:discrete-resolvent-sums}
\sum_{j=0}^{n}\|P_h^j\|_{\mathcal L(X)}\le C_T,
\qquad
\sum_{j=0}^{n}\|S_h^j\|_{\mathcal L(X)}
\le C\bigl(1+\log(n+1)\bigr).
\end{equation}
\end{lemma}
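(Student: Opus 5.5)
We represent the coefficients by Cauchy's formula and deform the circle into a contour adapted to the BE map. For $0<\rho<1$,
\[
P_h^j=\frac{1}{2\pi i}\int_{|\zeta|=\rho}\zeta^{-j-1}\Rr\!\left(\frac{1-\zeta}{h}\right)d\zeta,
\qquad
S_h^j=\frac{1}{2\pi i}\int_{|\zeta|=\rho}\zeta^{-j-1}\,\Psi_h(\zeta)^{-1}\Rr\!\left(\frac{1-\zeta}{h}\right)d\zeta .
\]
Substitute $\zeta=e^{-zh}$, so that $(1-\zeta)/h=(1-e^{-zh})/h=:\delta_h(z)$. The standard Lubich argument then applies. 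For $z$ on a truncated contour $\Gamma^h=\{z\in\Gamma_{\theta,\kappa}:|\Im z|\le\pi/h\}$ with $\theta$ close to $\pi/2$, the point $\delta_h(z)$ lies in $\Sigma_{\pi/2+\delta_\Sigma}$, and $c_1|z|\le|\delta_h(z)|\le c_2|z|$. The horizontal segments at $\Im z=\pm\pi/h$ cancel by periodicity. This gives
\[
P_h^j=\frac{h}{2\pi i}\int_{\Gamma^h}e^{t_jz}\,\Rr(\delta_h(z))\,e^{zh}\,dz,
\]
and the analogous formula for $S_h^j$. All operator bounds come from Corollary~\ref{cor:scarpi-operator-transfer-bounds}, evaluated at $\delta_h(z)$. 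They are therefore uniform in $h$ and in the Fourier grid.

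\textbf{Bound for $P_h^j$.} Take $\kappa=1/t_{j+1}$ and use $|e^{zh}|\le C$ on the arc. On the arc $|z|=1/t_{j+1}$ the integrand is bounded by $C\max\{|z|^{-\alpha_1},|z|^{-\alpha_2}\}$. We use the two-regime bound: $|z|^{-\alpha_1}$ if $|\delta_h(z)|\ge1$, and $|z|^{-\alpha_2}$ otherwise. The arc length is $\sim t_{j+1}^{-1}$, so the arc contributes $Ch(t_{j+1}^{\alpha_1-1}+t_{j+1}^{\alpha_2-1})$. On the rays, $|e^{t_jz}|\le e^{-c\,t_j r}$ and $|e^{zh}|\le e^{-chr}$, so $|e^{t_{j+1}z}|$ decays like $e^{-c t_{j+1}r}$. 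Integrating $r^{-\alpha_i}e^{-ct_{j+1}r}$ over $r\ge 1/t_{j+1}$ gives $Ct_{j+1}^{\alpha_i-1}$.

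\textbf{Bound for $S_h^j$.} The symbol is uniformly bounded by \eqref{eq:operator-transfer-S-bounds}, so the same contour gives only $O(h\cdot t_{j+1}^{-1})=O(1/(j+1))$. Concretely, the arc contributes $h\cdot C\cdot t_{j+1}^{-1}$, and the rays contribute $h\int_{1/t_{j+1}}^\infty e^{-ct_{j+1}r}\,dr\le Ch/t_{j+1}$. The case $j=0$ is handled separately by $\|S_h^0\|=\|\Psi_\alpha(1/h)^{-1}\Rr(1/h)\|\le C$. The $z\frac{d}{dz}$ bounds are not needed here; they would be needed only for a sharper $1/(j+1)^{1+\epsilon}$ rate.

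\textbf{The sums.} Use $h\,t_{j+1}^{\alpha_i-1}=h^{\alpha_i}(j+1)^{\alpha_i-1}$. Summing over $j\le n$ gives $h^{\alpha_i}(n+1)^{\alpha_i}/\alpha_i\le C t_{n+1}^{\alpha_i}\le C_T$. The bound $\sum_{j\le n} C/(j+1)\le C(1+\log(n+1))$ is immediate.

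The main obstacle is the contour step: checking that $\delta_h$ maps $\Gamma^h$ into the sector $\Sigma_{\pi/2+\delta_\Sigma}$ uniformly in $h$, and that it stays away from $0$ in the way the two-regime estimates require. Near $\Im z=\pm\pi/h$ the real part of $\delta_h(z)$ grows, which is harmless. For small $|z|h$ one has $\delta_h(z)=z(1+O(zh))$, so choosing $\theta$ slightly larger than $\pi/2$ with margin below $\pi/2+\delta_\Sigma$ works. I would first verify this elementary geometric lemma for $\Arg\delta_h$ and then run the estimates above. If $t_j$ exceeds the range where $\kappa=1/t_{j+1}$ keeps the small-$|z|$ regime controlled, the fixed bound $t_j\le T$ absorbs any growth $e^{\kappa' T}$ into $C$.
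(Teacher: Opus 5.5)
Your proposal is correct and is essentially the paper's argument. The paper cites Lubich's operator-valued BE--CQ contour estimate for \(P_h^j\), and for \(S_h^j\) it deforms the Cauchy contour around \(\zeta=1\) with radius \((j+1)^{-1}\), which under \(\zeta=e^{-zh}\) is exactly your arc \(|z|=1/t_{j+1}\) with the bounded symbol from \eqref{eq:operator-transfer-S-bounds}; one small slip is that \(\zeta^{-j-1}\,d\zeta=-h\,e^{t_jz}\,dz\), so there is no extra factor \(e^{zh}\), and on the rays you should instead use \(t_j\ge t_{j+1}/2\) for \(j\ge1\) and treat \(j=0\) directly via \(P_h^0=\Rr(1/h)\), \(S_h^0=\Psi_\alpha(1/h)^{-1}\Rr(1/h)\), after which your estimates go through unchanged.
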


\begin{proof}
Corollary~\ref{cor:scarpi-operator-transfer-bounds} gives the
sectorial bounds and derivative estimates for \(\widehat P(z)\) and
\(z\widehat S(z)\), uniformly under Fourier truncation.
The operator-valued BE--CQ coefficient estimate
\cite{Lubich1988I,Lubich1988II,CuestaLubichPalencia2006}
gives the bound for \(P_h^j\).
For the bounded symbol \(z\widehat S(z)\), deforming the Cauchy
coefficient contour around \(\zeta=1\) with radius \((j+1)^{-1}\)
gives \(C/(j+1)\) for \(j\ge1\), and \(j=0\) follows directly
from the symbol bound. These estimates give
\eqref{eq:discrete-P-S-bound}. Summation proves
\eqref{eq:discrete-resolvent-sums}.
\end{proof}

\begin{theorem}
\label{thm:fully-discrete-stability}
Let \(U^n,V^n\in D(\Aa)\) and \(r_{\rm ext}^n\in X\) for \(0\le n\le N\), with
\(U^0=V^0\) and \(r_{\rm ext}^0=0\), and suppose
\begin{align*}
U^n&=U^0+\sum_{j=0}^{n}\omega_{n-j}
\bigl(\Aa U^j+\Nn(U^j)\bigr),\\
V^n&=V^0+\sum_{j=0}^{n}\omega_{n-j}
\bigl(\Aa V^j+\Nn(V^j)\bigr)+r_{\rm ext}^n,
\qquad 1\le n\le N.
\end{align*}
Assume condition~\eqref{eq:scarpi-explicit-admissibility} and suppose that both $\{U^n\}$ and $\{V^n\}$ remain in a fixed bounded set, independent of $h$, on which \(\Nn:X\to X\) is Lipschitz with constant $L_\Nn$.  Then, for all sufficiently small \(h\)
\[
\max_{0\le n\le N}\|V^n-U^n\|
\le C_T\bigl(1+\log(N+1)\bigr)
\max_{0\le n\le N}\|r_{\rm ext}^n\|,
\]
where \(C_T\) is independent of \(h\).
\end{theorem}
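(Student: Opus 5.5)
Set \(E^n=V^n-U^n\) and \(g^n=\Nn(V^n)-\Nn(U^n)\), so that \(E^0=0\), \(\|g^n\|\le L_\Nn\|E^n\|\), and subtracting the two schemes gives the linear discrete Volterra relation
\[
E^n=\sum_{j=0}^{n}\omega_{n-j}\bigl(\Aa E^j+g^j\bigr)+r_{\rm ext}^n,\qquad 0\le n\le N .
\]
The plan is to solve this convolution equation explicitly with the discrete resolvent families of \eqref{eq:discrete-PS-family} and then close the estimate with a discrete Gronwall argument.

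\textbf{Step 1: representation.} Multiply by \(\zeta^n\) and sum, writing \(\widehat E(\zeta)=\sum_n E^n\zeta^n\) and similarly for \(g,r\). Then \(\widehat E=\Psi_h(\Aa\widehat E+\widehat g)+\widehat r\). Applying \(\Psi_h^{-1}\) and rearranging gives
\[
(\Psi_h^{-1}\Ii-\Aa)\widehat E=\widehat g+\Psi_h^{-1}\widehat r ,
\]
hence
\[
\widehat E=[\Psi_h^{-1}\Ii-\Aa]^{-1}\widehat g+\Psi_h^{-1}[\Psi_h^{-1}\Ii-\Aa]^{-1}\widehat r .
\]
In coefficients this reads
\[
E^n=\sum_{j=0}^nP_h^{n-j}g^j+\sum_{j=0}^nS_h^{n-j}r_{\rm ext}^j .
\]
Since all sequences are finite (set them to zero after \(N\)) and the symbols are analytic in \(|\zeta|<1\) by Corollary~\ref{cor:scarpi-operator-transfer-bounds}, the manipulation holds at the level of formal power series, equivalently for truncated sequences. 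The resolvent is applied only to elements of \(X\), and \(E^j\in D(\Aa)\) by hypothesis.

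\textbf{Step 2: bounds.} By Lemma~\ref{lem:discrete-scarpi-resolvent},
\[
\Bigl\|\sum_j S_h^{n-j}r_{\rm ext}^j\Bigr\|\le C\bigl(1+\log(n+1)\bigr)R,\qquad R=\max_j\|r_{\rm ext}^j\| .
\]
For the nonlinear part, using \(\|P_h^{k}\|\le Ch(t_{k+1}^{\alpha_1-1}+t_{k+1}^{\alpha_2-1})\), we get
\[
\|E^n\|\le CL_\Nn h\sum_{j=0}^{n}\bigl(t_{n-j+1}^{\alpha_1-1}+t_{n-j+1}^{\alpha_2-1}\bigr)\|E^j\|+C(1+\log(N+1))R .
\]
The \(j=n\) term carries the coefficient \(\|P_h^0\|\le C(h^{\alpha_1}+h^{\alpha_2})\). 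For \(h\) sufficiently small this is at most \(1/2\), so it can be absorbed into the left side. This is where the smallness assumption on \(h\) enters, and it also guarantees the implicit step is well posed.

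\textbf{Step 3: discrete Gronwall (main obstacle).} What remains is a weakly singular discrete Gronwall inequality with kernel \(h\,t_{n-j}^{\alpha-1}\) (\(\alpha=\min(\alpha_1,\alpha_2)\), since \(t\le T\)) and a constant forcing \(F=C(1+\log(N+1))R\). The standard weakly singular discrete Gronwall lemma (Dixon--McKee type, or via the discrete Mittag-Leffler majorant) gives \(\|E^n\|\le C_TF\) with \(C_T\) depending only on \(T\), \(\alpha\), \(L_\Nn\). To keep the argument self-contained, I would iterate the inequality finitely many times: after \(m\) convolutions with \(\alpha m\ge1\) the kernel becomes bounded, and the classical Gronwall bound then applies. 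Each convolution of \(h t^{a-1}\) with \(h t^{b-1}\) is bounded by \(C\,h\,t^{a+b-1}\) via the discrete beta-function estimate. Because the forcing is constant in \(n\), the logarithmic factor passes through unchanged.

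Since all constants depend only on the symbol bounds, and these are grid-uniform by Lemma~\ref{lem:fourier-resolvent}, the resulting \(C_T\) is independent of \(h\) and of the Fourier truncation.
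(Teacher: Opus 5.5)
Your proof is correct, and Steps 1--2 coincide with the paper's argument. Both use the resolvent representation $E^n=\sum_{j}P_h^{n-j}g^j+\sum_{j}S_h^{n-j}r_{\rm ext}^j$, the logarithmic bound from Lemma~\ref{lem:discrete-scarpi-resolvent}, and the absorption of the $j=n$ term via $\|P_h^0\|\le C(h^{\alpha_1}+h^{\alpha_2})$.

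The difference is in how the weakly singular discrete inequality is closed. The paper uses no discrete Gronwall lemma. It picks a window length $\ell_T$, independent of $h$, with $2L_\Nn C(\ell_T^{\alpha_1}+\ell_T^{\alpha_2})\le 1/2$. On each window it absorbs the recent history and treats the earlier history as already-controlled forcing, using $\sum_j\|P_h^j\|\le C_T$. A finite induction over $\lceil T/\ell_T\rceil+1$ windows then finishes the proof.

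The two closings buy different things:
\begin{itemize}
\item The paper's block argument needs only that the $\ell^1$-mass of $\{P_h^k\}$ over short lag windows is small. It is therefore elementary and works with both powers $t^{\alpha_1-1},t^{\alpha_2-1}$ directly, without reducing to $\alpha=\min\{\alpha_1,\alpha_2\}$.
\item Your Dixon--McKee route uses the pointwise power bound. The iterated-kernel version you sketch is sound: the discrete beta estimate holds up to a constant, and the $m$-fold kernel is bounded once $m\alpha\ge1$. This route gives an explicit Mittag-Leffler-type dependence of $C_T$ on $L_\Nn$ and $T$. It also handles weighted forcing such as $C_Th\,t_n^{\alpha_1-1}$ directly, which the paper needs in Theorem~\ref{thm:temporal-consistency} and treats by rerunning the block induction.
\end{itemize}

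One small precision in Step 1: if you extend the sequences by zero after $N$, the generating-function identity holds only modulo $\zeta^{N+1}$. All the operations are causal, so this is all you need. Your side remark that absorbing the current term makes the implicit step well posed is not needed for this theorem, since $U^n$ and $V^n$ are given.
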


\begin{proof}
Subtracting the recursions and using
\eqref{eq:discrete-PS-family} gives
\[
V^n-U^n
=\sum_{j=1}^{n}P_h^{n-j}
\bigl(\Nn(V^j)-\Nn(U^j)\bigr)
+\sum_{j=1}^{n}S_h^{n-j}r_{\rm ext}^j.
\]
Lemma~\ref{lem:discrete-scarpi-resolvent} yields
\[
\max_{n\le N}\left\|\sum_{j=1}^{n}S_h^{n-j}r_{\rm ext}^j\right\|
\le C(1+\log(N+1))\max_{j\le N}\|r_{\rm ext}^j\|.
\]
Since \(\|P_h^0\|\le C(h^{\alpha_1}+h^{\alpha_2})\), choose \(h\)
so that \(L_\Nn\|P_h^0\|\le1/2\). Absorbing the current term gives
\[
\|V^n-U^n\|
\le 2L_\Nn\sum_{j=1}^{n-1}\|P_h^{n-j}\|\,\|V^j-U^j\|
+2\left\|\sum_{j=1}^{n}S_h^{n-j}r_{\rm ext}^j\right\|.
\]
For \(h\le\ell_T\), the positive-lag sum on an interval of length
\(\ell_T\) is bounded by
\(C(\ell_T^{\alpha_1}+\ell_T^{\alpha_2})\).
Choose \(\ell_T>0\), independently of \(h\), so that
\(2L_\Nn C(\ell_T^{\alpha_1}+\ell_T^{\alpha_2})\le1/2\).
On each interval, absorb its recent history and use the already
controlled preceding history. A finite induction over at most
\(\lceil T/\ell_T\rceil+1\) intervals, together with
\eqref{eq:discrete-resolvent-sums}, proves the claim.
\end{proof}

\paragraph{Temporal consistency}

Theorem~\ref{thm:semilinear-regularity} supplies the regularity
assumptions below on every \([0,T]\) with \(T<T_{\max}\), under its hypotheses. 
The BE--CQ scheme is first-order accurate away from the initial time, as proved below.

The high-frequency expansion
\(\Psi_\alpha(z)=z^{-\alpha_1}
[1+O((1+\log|z|)/|z|)]\), followed by contour inversion, gives
\(\psi_\alpha(t)\sim t^{\alpha_1-1}/\Gamma(\alpha_1)\) as
\(t\downarrow0\). For a Volterra solution continuous in \(D(\Aa)\),
\[
u(t)=u_0+
\frac{t^{\alpha_1}}{\Gamma(1+\alpha_1)}
\bigl(\Aa u_0+\Nn(u_0)\bigr)+o(t^{\alpha_1}).
\]
The derivative bound in Theorem~\ref{thm:semilinear-regularity}
is of order \(t^{\alpha_1-1}\). The error estimate below is
\(O(h^{\alpha_1})\) at \(t_1=h\) and \(O(h)\) for
\(t_n\ge t_*>0\).

\begin{theorem}
\label{thm:temporal-consistency}
Assume condition~\eqref{eq:scarpi-explicit-admissibility}. Let
\(u\) be a Volterra solution of \eqref{eq:volterra-nonlinear} such that
\[
u\in C([0,T];D(\Aa))\cap W^{1,1}(0,T;X)
\]
and
\[
\|u'(t)\|_X
\le C t^{\alpha_1-1},
\qquad
0<t\le T.
\]
Let
\(\{u^n\}_{n=0}^N\subset D(\Aa)\) be a solution of the
full-history recursion~\eqref{eq:pde-cq-volterra}, with
\(u^0=u_0\).
Suppose that \(\Nn:X\to X\) is Lipschitz on a fixed bounded set containing the exact and discrete solutions, independently of \(h\).  Then, for all sufficiently small \(h\) and every fixed
\(t_*>0\),
\[
\max_{t_*\le t_n\le T}
\|u^n-u(t_n)\|_X
\le C_* h,
\]
where \(C_*\) is independent of \(h\).
\end{theorem}

\begin{proof}
The linear mild--Volterra equivalence in the section 3 and
\eqref{eq:S-P-identity} give
\[
u(t)-u_0
=
\int^t_0 P(t-s) g(s) ds,
\qquad
g(t):=\Aa u_0+\Nn(u(t)).
\]

For \(1\le n\le N\), the solution \(u\) satisfies
\begin{equation}
\label{eq:exact-samples-perturbed-cq}
u(t_n)=u_0
+
\sum_{j=0}^n\omega_{n-j}
\bigl[\Aa u(t_j)+\Nn(u(t_j))\bigr] + r_{\rm cons}^n,
\end{equation}
with \(r_{\rm cons}^0:=0\). Thus, we define the BE-CQ consistency error as
\[
\varepsilon_{\rm cons}^n
:=
-\sum_{j=1}^nS_h^{n-j}r_{\rm cons}^j .
\]
The generating functions in \eqref{eq:discrete-PS-family} satisfy
\(\sum_{j\ge0}P_h^j\zeta^j
=\Psi_h(\zeta)\sum_{j\ge0}S_h^j\zeta^j\).
Because the time-step recursion does not apply at $n=0$, explicitly enforcing the initial condition yields
\begin{equation}
\label{eq:resolved-consistency-identity}
\varepsilon_{\rm cons}^n
=
\sum_{j=0}^nP_h^{\,n-j}g(t_j)
-
\int_0^{t_n}P(t_n-s)g(s)\,ds
-
\omega_0S_h^n g(0).
\end{equation}

On a fixed right-shifted subsector of
\(\Sigma_{\pi/2+\delta_\Sigma}\),
Corollary~\ref{cor:scarpi-operator-transfer-bounds} gives
\(\|\widehat P(z)\|\le C|z|^{-\alpha_1}\).
The BE--CQ estimate \cite{Lubich1988I}, applied to
the constant function \(g(0)\), gives
\begin{equation}
\label{eq:CQ-error-constant}
\left\|
\sum_{j=0}^{n}P_h^j g(0)
-
\int_0^{t_n}P(s) g(0)\,ds
\right\|_X
\le
C h t_n^{\alpha_1-1}\|g(0)\|_X,
\qquad 1\le n\le N .
\end{equation}
The proof of that theorem gives the same constant-input estimate
at non-grid times \(h\le t\le T\), with the sum over \(jh\le t\).
For \(0<t<h\), the corresponding operator error is bounded by
\(\|P_h^0\|+\int_0^t\|P(s)\|\,ds
\le C(h^{\alpha_1}+t^{\alpha_1})
\le Ch t^{\alpha_1-1}\).

Since \(X\) is Hilbert, absolute continuity of \(u\) and
Lipschitz continuity of \(\Nn\) on its range imply
\(g\in W^{1,1}(0,T;X)\) and
\(\|g'(t)\|_X\le Ct^{\alpha_1-1}\).
Use \(g(t)-g(0)=(1*g')(t)\) and the convolution associativity identities
\cite{Lubich1988I}.
The constant-input estimate, including non-grid times, then gives
\begin{align}
&\left\|
\sum_{j=0}^nP_h^{\,n-j}[g(t_j)-g(0)]
-
\int_0^{t_n}P(t_n-s)[g(s)-g(0)]\,ds
\right\|_X
\nonumber\\
&\qquad\le
Ch
\int_0^{t_n}
(t_n-s)^{\alpha_1-1}
s^{\alpha_1-1}\,ds
=
C h \frac{\Gamma(\alpha_1)^2}{\Gamma(2\alpha_1)}t_n^{2\alpha_1-1},
\label{eq:CQ-error-remainder}
\end{align}

The remaining starting term in
\eqref{eq:resolved-consistency-identity} is also of the required order.
Indeed, by
\(\omega_0=O(h^{\alpha_1})\) and
Lemma~\ref{lem:discrete-scarpi-resolvent},
\[
\|\omega_0S_h^ng(0)\|_X
\le
C\frac{h^{\alpha_1}}{n+1}\|g(0)\|_X
\le
C h t_n^{\alpha_1-1}\|g(0)\|_X,
\qquad n\ge1,
\]
where we used \(h\le t_n\).
Combining this estimate with
\eqref{eq:CQ-error-constant} and
\eqref{eq:CQ-error-remainder} yields
\[
\|\varepsilon_{\rm cons}^n\|_X
\le
C h
\left(
t_n^{\alpha_1-1}\|g(0)\|_X
+
t_n^{2\alpha_1-1}
\right)
\le
C_T h t_n^{\alpha_1-1},
\qquad
1\le n\le N .
\]

Comparing \eqref{eq:exact-samples-perturbed-cq} with \eqref{eq:pde-cq-volterra} and using \eqref{eq:discrete-PS-family} gives
\[ u^n-u(t_n) = \sum_{j=1}^nP_h^{\,n-j}
\bigl[\Nn(u^j)-\Nn(u(t_j))\bigr]
+\varepsilon_{\rm cons}^n .\]
Using the Lipschitz continuity of $\Nn$ and Lemma~\ref{lem:discrete-scarpi-resolvent},
after absorption of the current term for small \(h\), we derive 
\[
\|u^n-u(t_n)\|_X
\le C L_\Nn h\sum_{j=1}^{n-1}
\left(t_{n-j+1}^{\alpha_1-1}+t_{n-j+1}^{\alpha_2-1}\right)
\|u^j-u(t_j)\|_X + C_T h t_n^{\alpha_1-1}.
\]
The discrete convolution estimate
\[
h\sum_{j=1}^{n-1}
t_{n-j+1}^{\alpha_i-1}t_j^{\alpha_1-1}
\le \int_0^{t_n}
(t_n-s)^{\alpha_i-1}s^{\alpha_1-1}\,ds
=\frac{\Gamma(\alpha_i)\Gamma(\alpha_1)}
{\Gamma(\alpha_i+\alpha_1)}
t_n^{\alpha_i+\alpha_1-1}
\]
for \(i\in\{1,2\}\) shows that the convolution preserves the weight
\(t_n^{\alpha_1-1}\) on finite time intervals. 
By choosing $\ell>0$ (independent of $h$) such that $C L_\Nn(\ell^{\alpha_1}+\ell^{\alpha_2})<\frac12$, and applying the finite-block induction from Theorem~\ref{thm:fully-discrete-stability} sequentially across interval blocks starting with $[0,\ell]$, we obtain
\[\|u^n-u(t_n)\|_X
\le
C_T h\,t_n^{\alpha_1-1},
\qquad
1\le n\le N.\]
Restricting to \(t_n\ge t_*>0\) proves the stated estimate.
\end{proof}
\begin{remark}
   The same proof applies to the case $g(t)=\Aa u_0+\Nn(u(t))+f(t)$ under the assumptions of Section~3. The stability theorem remains valid when both sequences employ identical forcing terms, as these cancel exactly in the difference equation.
\end{remark}

\subsection{Fourier discretization and Newton--GMRES solver}\label{subsec:fourier_newton_solver}
Let \(X_{\boldsymbol M}\) denote the periodic grid space on a uniform
Fourier grid with \(M_{\rm grid}=\prod_{i=1}^dM_i\) nodes.  We use the normalized discrete inner product
\[
\langle U,V\rangle_{\boldsymbol M}
=
\frac{|\Omega|}{M_{\rm grid}}
\sum_{\boldsymbol j}U_{\boldsymbol j}\overline{V_{\boldsymbol j}},
\qquad
\|U\|_{\boldsymbol M}
=
\langle U,U\rangle_{\boldsymbol M}^{1/2}.
\]
The Fourier pseudospectral realization \(\Aa_M:X_{\boldsymbol M}\to X_{\boldsymbol M}\) of
\(\Aa=-d_u(-\Delta)^{\beta/2}\) is defined by
\[
[\FT_h(\Aa_M U)]_{\boldsymbol k}
=
-d_u|\boldsymbol\kappa_{\boldsymbol k}|^\beta
(\FT_hU)_{\boldsymbol k}.
\]
Define the nodal reaction by \((\Nn_M(U))_{\boldsymbol j}=\Nn(U_{\boldsymbol j})\).
The Fourier resolvent estimates also hold for \(\Aa_M\) in
\(\|\cdot\|_{\boldsymbol M}\) with grid-independent constants.
Thus the preceding time-discrete estimates apply on \(X_{\boldsymbol M}\)
when the relevant Lipschitz constants are uniform on the grids.
Let \(H^n\) be the grid version of \eqref{eq:pde-history}, with
\(u_0,u^j\) replaced by \(U^0,U^j\).  At each time step, solve
\begin{equation}
\label{eq:fully-discrete-implicit}
G_n(U^n)=0,
\qquad
G_n(V):=V-H^n-\omega_0\bigl(\Aa_M V+\Nn_M(V)\bigr).
\end{equation}

On the invariant real grid subspace, assume that \(\Nn_M\) satisfies the mesh-independent one-sided Lipschitz condition
\begin{equation}
\label{eq:one-sided-reaction}
\Re\langle\Nn_M(V)-\Nn_M(W),V-W\rangle_{\boldsymbol M}
\le
\lambda_{\Nn}\|V-W\|_{\boldsymbol M}^2.
\end{equation}
Since \(-\Aa_M\) is nonnegative and self-adjoint,
\[
\Re\langle G_n(V)-G_n(W),V-W\rangle_{\boldsymbol M}
\ge (1-\omega_0\lambda_{\Nn})\|V-W\|_{\boldsymbol M}^2.
\]
If \(\omega_0\lambda_{\Nn}<1\), the continuous map \(G_n\) is
strongly monotone and coercive, so the finite-dimensional current
equation has a unique solution. This condition holds uniformly on the
grids for small \(h\), and for every \(h>0\) if
\(\lambda_{\Nn}\le0\).

Starting from \(U^{n,(0)}=U^{n-1}\), an inexact Newton correction
\(s_k\) satisfies
\begin{equation}
\label{eq:inexact-newton-equation}
\begin{aligned}
\bigl[I-\omega_0\Aa_M-\omega_0\Nn_M'(U^{n,(k)})\bigr]s_k
&=-G_n(U^{n,(k)})+\mu_k^{\rm lin},\\
\|\mu_k^{\rm lin}\|_{\boldsymbol M}&\le\eta_k\|G_n(U^{n,(k)})\|_{\boldsymbol M},
\end{aligned}
\end{equation}
where \(0\le\eta_k\le\eta_{\max}<1\), with \(\eta_{\max}\) fixed.  The update is
\[
U^{n,(k+1)}=U^{n,(k)}+\lambda_ks_k,
\qquad 0<\lambda_k\le1.
\]

Use Armijo backtracking with a parameter in \((0,1/2)\), and define
\[
\phi_k(\lambda)
:=
\frac12
\left\|
G_n\bigl(U^{n,(k)}+\lambda s_k\bigr)
\right\|_{\boldsymbol M}^2.
\]
Then
\(
\phi_k'(0)\le
-(1-\eta_{\max})\|G_n(U^{n,(k)})\|_{\boldsymbol M}^2,
\)
so the correction is a descent direction whenever \(G_n(U^{n,(k)})\ne0\).
Moreover, if $1-\omega_0\lambda_{\Nn}>0$, then for every grid vector \(Z\), differentiating \eqref{eq:one-sided-reaction} in the direction \(Z\) implies
\[
\Re \langle G_n'(V)Z,Z\rangle_{\boldsymbol M}
\ge(1-\omega_0\lambda_{\Nn})\|Z\|_{\boldsymbol M}^2,
\qquad
\|G_n'(V)^{-1}\|\le(1-\omega_0\lambda_{\Nn})^{-1}.
\]

If \(\Nn_M'\) is locally Lipschitz, strong monotonicity also makes every residual level set bounded.  
Standard damped inexact Newton theory therefore gives convergence to the unique current solution and superlinear local convergence when \(\eta_k\to0\) \cite{KnollKeyes2004}.

\begin{example}
    For the Nagumo reaction
\(\Nn(u)=u(1-u)(u-a)\), \(0<a<1\),
\[
\Nn'(u)=-3u^2+2(1+a)u-a
\le \frac{1-a+a^2}{3}.
\]
Thus \eqref{eq:one-sided-reaction} holds globally with
\(\lambda_{\Nn}=(1-a+a^2)/3\).
\end{example}
\begin{example}
    For the FitzHugh--Nagumo case
\(
\Nn(u,v)
=\begin{pmatrix}u(1-u)(u-a)-v\\
r_v(b u-\gamma v)
\end{pmatrix},
\)
with \(0<a<1\) and \(b,r_v,\gamma>0\), the symmetric part of its Jacobian satisfies
\[
\frac{\Nn'(u,v)+\Nn'(u,v)^\top}{2}
\preceq
\begin{pmatrix}
(1-a+a^2)/3 &(r_vb-1)/2\\
(r_vb-1)/2 &-r_v\gamma
\end{pmatrix}.
\]
The largest eigenvalue of this matrix is therefore an admissible one-sided Lipschitz constant $\lambda_{\Nn}$. 
The Lipschitz constants in the preceding stability estimates are mesh-independent whenever the grid values remain in a common bounded pointwise range.
\end{example}

\paragraph{Reaction-dependent preconditioning}
Write \(J_k=G_n'(U^{n,(k)})\), and let \(\overline J_k\) be a spatially constant scalar, diagonal, or small component-block
approximation of the discrete reaction Jacobian \(\Nn_M'(U^{n,(k)})\). Define
\[
\mathcal Q_h=I-\omega_0\Aa_M,
\qquad
\mathcal P_{h,k}=\mathcal Q_h-\omega_0\overline J_k.
\]
The choice \(\overline J_k=0\) gives diffusion-only preconditioning. For uniformly nonsingular Fourier-mode matrices, 
applying \(\mathcal P_{h,k}^{-1}\) requires FFTs and one scalar division or small block solve per mode, at cost
\(\mathcal O(M_{\rm grid}\log M_{\rm grid})\).

Right-preconditioned GMRES \cite{SaadSchultz} computes
\(s_k=\mathcal P_{h,k}^{-1}y_k\) until the true linear residual
\(J_ks_k+G_n(U^{n,(k)})\) satisfies
\eqref{eq:inexact-newton-equation}.
Since
\(\|v\|_{\boldsymbol M}
=(|\Omega|/M_{\rm grid})^{1/2}\|v\|_2\),
relative Euclidean and relative \(\boldsymbol M\)-norm residuals coincide. Neither the Newton matrix nor the reaction Jacobian is assembled.

For \(s=\mathcal P_{h,k}^{-1}y\), the splitting
\(J_k=\mathcal P_{h,k}
-\omega_0(\Nn_M'(U^{n,(k)})-\overline J_k)\)
gives
\begin{equation}
\label{eq:reaction-aware-reused-action}
J_k\mathcal P_{h,k}^{-1}y
=
y-\omega_0
\bigl(\Nn_M'(U^{n,(k)})-\overline J_k\bigr)s.
\end{equation}
Each Krylov product therefore uses one preconditioner application and a pointwise reaction correction, without a separate application of \(\Aa_M\).
For \(\overline J_k=0\), the bound
\(\|\mathcal Q_h^{-1}\|\le1\) shows that the preconditioned operator is a mesh-uniform \(\mathcal O(\omega_0)\)
perturbation of the identity if \(\Nn_M'\) is uniformly bounded on the numerical range.

The identity \(\mathcal Q_hs=y+\omega_0\overline J_ks\) also gives
\begin{equation}
\label{eq:reaction-aware-armijo-reuse}
\begin{aligned}
G_n(U^{n,(k)}+\lambda s)
={}&G_n(U^{n,(k)})
+\lambda\bigl(y+\omega_0\overline J_ks\bigr)\\
&-\omega_0\bigl[
\Nn_M(U^{n,(k)}+\lambda s)-\Nn_M(U^{n,(k)})
\bigr].
\end{aligned}
\end{equation}
Thus Armijo trials require only pointwise reaction evaluations and vector operations. 
The nonlinear stopping test is verified with the true residual.

For scalar reactions, set
\(\overline J_k=(d_{\min}+d_{\max})/2\), where
\(d_{\min}\) and \(d_{\max}\) are the extrema of the diagonal entries of
\(\Nn_M'(U^{n,(k)})\).
For systems such as FitzHugh--Nagumo, \(\overline J_k\) retains the constant couplings and second-row derivatives.
If a shifted Fourier-mode matrix fails the nonsingularity check, use \(\overline J_k=0\).

\begin{remark}[Spatial discretization]
Let \(I_{\boldsymbol M}\) denote the periodic trigonometric interpolation operator and set \(M=\min_i M_i\). 
We implicitly identify the grid vectors with their trigonometric interpolants. When applying the discrete operators \(\Aa_M\) or \(\Nn_M\), the term \(I_{\boldsymbol M}v\) simply represents the corresponding nodal values. 
For any \(v\in H_{\rm per}^s(\Omega)\) with \(s>d/2+\beta\), the standard Fourier approximation estimates \cite{ShenTangWang2011} provide
\[
\|v-I_{\boldsymbol M}v\|_{L^2}
+
M^{-\beta}
\|I_{\boldsymbol M}\Aa v-\Aa_M I_{\boldsymbol M}v\|_{\boldsymbol M}
\le
CM^{-s}\|v\|_{H^s}.
\]
We assume that the exact solution maintains this \(H^s\)-regularity uniformly on \([0,T]\) and the spatially semidiscrete solutions satisfy the temporal regularity assumptions above uniformly in \(\boldsymbol M\). 
Furthermore, the exact, semidiscrete, and discrete values remain in a common bounded pointwise range on which the reaction is uniformly Lipschitz. 
Because \(I_{\boldsymbol M}\Nn(v)=\Nn_M(I_{\boldsymbol M}v)\) holds exactly at the grid nodes, the spatial defect \(I_{\boldsymbol M}\Aa u-\Aa_M I_{\boldsymbol M}u\) acts exclusively as a forcing term. This defect is effectively controlled by the grid-uniform \(L^1\) bound of the spatially semidiscrete resolvent family \(P\) associated with \(\Aa_M\). 
Finally, comparing the exact and semidiscrete trajectories and applying the weighted temporal error estimate from the discrete analogue of Theorem~\ref{thm:temporal-consistency}, we conclude that for the initial condition \(U^0=I_{\boldsymbol M}u_0\),
\[
\|U^n-u(t_n)\|_{L^2}
\le
C\left(
h\,t_n^{\alpha_1-1}
+
M^{-(s-\beta)}
\right).
\]
\end{remark}

\section{Blockwise SOE with adaptive short--long memory splitting}
\label{sec:adaptive-block-soe}
Direct evaluation of the grid version of the history term in \eqref{eq:pde-cq-volterra} requires \(\mathcal O(N^2)\) vector operations and the storage of \(\mathcal O(N)\) vectors over \(N\) time steps. Fast CQ and SOE methods reduce this computational cost via blockwise integration or exponential recurrences \cite{SchadleLopezFernandezLubich2006,jiang2017fast,zhu2019fast}. We directly fit the CQ weights extracted from \eqref{eq:pde-fft-weights}. The most recent recovered weights are applied without any SOE approximation, whereas distant history terms are approximated using local SOE fits constrained by a prescribed history-defect tolerance. This compression strategy modifies only the history evaluation, leaving the recovered current-step coefficient \(\check\omega_0\) and the nonlinear solver unchanged.

\subsection{Short-memory splitting and blockwise compression}
Let \(U_{\rm e}^n\) denote the exact-weight, full-history Fourier--BE--CQ
solution and \(U_{\rm num}^n\) the computed solution with recovered weights,
SOE compression, and inexact Newton--GMRES solves, with
\(U_{\rm e}^0=U_{\rm num}^0=U^0\). Set
\[F_{\rm e}^j:=\Aa_M U_{\rm e}^j+\Nn_M(U_{\rm e}^j),
\qquad
F_{\rm num}^j:=\Aa_M U_{\rm num}^j+\Nn_M(U_{\rm num}^j),
\qquad 0\le j\le N.\]
For an additive source \(f(t)\), evaluated at the grid nodes, add \(f(t_j)\) to both
\(F_{\rm e}^j\) and \(F_{\rm num}^j\), and replace
\(\Aa_M V+\Nn_M(V)\) by \(\Aa_M V+\Nn_M(V)+f(t_n)\) in the
current-step equation and its residual. The same source values are used
in both schemes.

Choose \(0\le J_0<N\). Use the recovered weights \(\check\omega_\ell\) directly for \(\ell\le J_0\), and consider SOE compression only for \(\ell>J_0\).
Let \(\mathcal P=\mathcal P_{\rm cmp}\cup\mathcal P_{\rm ucmp}\)
be a disjoint partition of the integer lags \(\{J_0+1,\ldots,N\}\)
into compressed and uncompressed blocks. The computed history term is
\[
\widetilde H^n = U^0+\underbrace{\sum_{\ell=1}^{\min\{n,J_0\}}
\check\omega_\ell F_{\rm num}^{n-\ell}}_{H_{\rm short}^n}
+\sum_{I\in\mathcal P_{\rm cmp}}\widetilde H_I^n
+\sum_{I\in\mathcal P_{\rm ucmp}}H_I^n,
\qquad 1\le n\le N.
\]
where \(H_{\rm short}^n\) is the short-memory contribution.
The far-tail interval starts as one root block and is bisected adaptively.
The terms \(\widetilde H_I^n\) use SOE fits, whereas \(H_I^n\) use
\(\check\omega_\ell\) directly. Thus uncompressed terms have no SOE approximation error but still contain the coefficient-recovery error.

For a block $I=[I_a,I_b]$, define its width and recovered absolute CQ-weight mass by
\[
|I|=I_b-I_a+1,
\qquad
m_I=\sum_{\ell\in I}|\check\omega_\ell|.
\]
The initial block is assigned the history-compression tolerance
\(\tau_{\rm hist}>0\), and each block inherits a local tolerance \(\tau_I\).
When a block \(I\) with \(|I|>1\) is bisected at
\[
K=\left\lfloor\frac{I_a+I_b}{2}\right\rfloor,
\qquad
I_{\rm L}=[I_a,K],
\qquad
I_{\rm R}=[K+1,I_b],
\]
the tolerances of the two subblocks are assigned by
\begin{equation}
\label{eq:absoe-budget-split}
\tau_{I_{\rm L}}=\tau_I\frac{m_{I_{\rm L}}}{m_I},
\qquad
\tau_{I_{\rm R}}=\tau_I-\tau_{I_{\rm L}},
\qquad m_I>0.
\end{equation}
If \(m_I=0\), block widths are used in place of coefficient masses.
Each bisection preserves the sum of the local tolerances, and the tolerances assigned
to uncompressed terminal blocks are discarded. Consequently, $\sum_{I\in\mathcal P_{\rm cmp}}\tau_I\le\tau_{\rm hist}$.
Together with the local acceptance condition \eqref{eq:absoe-local-accept}, this gives
\[
F_{\rm bd}\sum_{I\in\mathcal P_{\rm cmp}}\rho_I
\le\sum_{I\in\mathcal P_{\rm cmp}}\tau_I
\le\tau_{\rm hist}.
\]

\subsection{Adaptive blockwise SOE approximation}
On \(I=[I_a,I_b]\), approximate the recovered CQ coefficients by
\[
\check\omega_\ell
 \approx
 \omega_\ell^{I,\mathrm{SOE}}
 :=
 \sum_{q=1}^{R_I}a_{I,q}z_{I,q}^{\,\ell},
 \qquad I_a\le\ell\le I_b,
\]
with prescribed positive decay parameters \(b_{I,q}\) and \(z_{I,q}=e^{-b_{I,q}}\in(0,1)\).
For a trial rank \(R_I\), $b_{I,q}\in\left[
 \frac{\chi_-}{I_b},
 \frac{\chi_+}{I_a}
 \right]$
with \(0<\chi_-<\chi_+\).
For \(R_I>1\), the nodes are logarithmically equispaced, including both endpoints:
\[
 b_{I,q}=\left(\frac{\chi_-}{I_b}\right)^{1-\frac{q-1}{R_I-1}}
          \left(\frac{\chi_+}{I_a}\right)^{\frac{q-1}{R_I-1}},
 \qquad q=1,\ldots,R_I.
\]
For \(R_I=1\), the geometric mean of the two
endpoints is used. This scales the exponential decay lengths to the lag interval of the block.

Let \(A^I_{\ell q}=e^{-\ell b_{I,q}}\) and
\(\check{\boldsymbol\omega}_I=(\check\omega_{I_a},\ldots,\check\omega_{I_b})^\top\).
For \(m_I>0\), define the fitting weights by
\[
 \vartheta_\ell^I
 =
 \frac{1}{
 \max\left\{
 |\check\omega_\ell|,
 10^{-10}\max_{i\in I}|\check\omega_i|
 \right\}},
 \qquad \ell\in I.
\]
For \(m_I=0\), set \(\vartheta_\ell^I=10^{10}\). The same regularized
solve gives \(\boldsymbol a_I=0\) and \(\rho_I=0\).
The coefficient vector is the
Tikhonov-regularized weighted least-squares solution
\[
 \boldsymbol a_I
 =
 \arg\min_{\boldsymbol a\in\mathbb R^{R_I}}
 \left\{
  \left\|
  (W_I)^{1/2}
 (A^I\boldsymbol a-\check{\boldsymbol\omega}_I)
 \right\|_2^2
 +
 \lambda_{\rm reg}\|\boldsymbol a\|_2^2
 \right\},\quad \lambda_{\rm reg}>0,
\]
where
\(
 W_I=\operatorname{diag}(\vartheta_{I_a}^I,\ldots,\vartheta_{I_b}^I)
\).
The hybrid weights balance the fit. Acceptance uses the unweighted
\(\ell^1\)-residual
\[
\rho_I
:=
\sum_{\ell=I_a}^{I_b}
\left|
\check\omega_\ell-\omega_\ell^{I,\mathrm{SOE}}
\right|.
\]
The prescribed bound \(F_{\rm bd}>0\) converts the coefficient
residual into a history-defect bound provided that
\begin{equation}
\label{eq:absoe-F-bound}
\max_{0\le j\le N}\|F_{\rm num}^j\|_{\boldsymbol M}
\le F_{\rm bd}.
\end{equation}
The periodic Fourier discretization satisfies
\(
\|\Aa_M V\|_{\boldsymbol M}
\le C\|V\|_{H_{\boldsymbol M}^{\beta}}.
\)
A uniform bound on this discrete Sobolev norm controls the diffusion term, while a uniform pointwise bound controls the polynomial reaction term.
Together with a bound on any additive source, these bounds yield a mesh-independent bound of the form \eqref{eq:absoe-F-bound}.
In the computation, \eqref{eq:absoe-F-bound} is checked directly on the computed trajectory.

A trial block is accepted when
\begin{equation}
\label{eq:absoe-local-accept}
F_{\rm bd}\rho_I\le\tau_I.
\end{equation}
For each block, the integer ranks \(R_{\min},R_{\min}+1,\ldots,R_{\max}\) are tested in increasing order, and the first rank satisfying
\eqref{eq:absoe-local-accept} is retained. If no tested rank passes this condition, the block is bisected when its width exceeds the prescribed
bisection cutoff, which is at least one lag. Otherwise it is retained without compression.

\paragraph{Compression error bound and right-hand-side check}
Let \(F_{\rm bd}^{(0)}>0\) be an initial estimate obtained without a full-history solution. For \(k\ge0\), construct the partition and local ranks
using \eqref{eq:absoe-budget-split} and \eqref{eq:absoe-local-accept}, with \(F_{\rm bd}=F_{\rm bd}^{(k)}\). The resulting defect bound is
\begin{equation}
\label{eq:fbd-design-certificate}
\mathcal C_{\rm des}^{(k)}
:=
F_{\rm bd}^{(k)}
\sum_{I\in\mathcal P_{\rm cmp}^{(k)}}\rho_I^{(k)}
\le \tau_{\rm hist}.
\end{equation}
After computing the resulting blockwise SOE trajectory, define its maximum right-hand-side norm and test
\begin{equation}
\label{eq:fbd-trajectory-certificate}
F_*^{{\rm num},(k)}
:=\max_{0\le j\le N}\|F_{\rm num}^{j,(k)}\|_{\boldsymbol M},
\qquad
F_*^{{\rm num},(k)}\le F_{\rm bd}^{(k)}.
\end{equation}
Together, \eqref{eq:fbd-design-certificate} and
\eqref{eq:fbd-trajectory-certificate} imply the sharper a posteriori bound
\[
\mathcal C_{\rm post}^{(k)}
:=
F_*^{{\rm num},(k)}
\sum_{I\in\mathcal P_{\rm cmp}^{(k)}}\rho_I^{(k)}
\le \tau_{\rm hist}.
\]
The computed history-compression defect in \eqref{eq:absoe-history-residual} is also checked against
\(\mathcal C_{\rm post}^{(k)}\) and \(\tau_{\rm hist}\).
If \(F_*^{{\rm num},(k)}>F_{\rm bd}^{(k)}\), the update
\begin{equation}
\label{eq:fbd-restart-update}
F_{\rm bd}^{(k+1)}
=2\max\{F_{\rm bd}^{(k)},F_*^{{\rm num},(k)}\}
\end{equation}
is followed by a new local block construction and time integration from
\(t=0\). 
A computed defect exceeding \(\mathcal C_{\rm post}^{(k)}\)
requires a separate check of the recurrence and residual evaluation.
The solution is accepted only after all three checks are satisfied.
If the restart limit is reached without satisfying the checks, the run is reported as unsuccessful. The prescribed restart limit and the accumulated fitting and time-stepping costs are reported in Section~\ref{sec:numerics}.
The successive-doubling weight-recovery check is handled separately, since changing \(F_{\rm bd}\) does not improve the recovered weights.

Set \(F_{\rm num}^j=0\) for \(j<0\).  For
\(I=[I_a,I_b]\in\mathcal P_{\rm cmp}\) and \(1\le q\le R_I\), define
\[
 Y_{I,q}^{n}
 =
 \sum_{\ell\in I\cap\{1,\ldots,n\}}
 z_{I,q}^{\,\ell}F_{\rm num}^{n-\ell},
 \qquad Y_{I,q}^0=0,
\]
for $1\le n\le N$. Direct re-indexing gives the recurrence
\begin{equation}
\label{eq:absoe-block-recurrence}
Y_{I,q}^{n}
 =
 z_{I,q}Y_{I,q}^{n-1}
 +z_{I,q}^{{I_a}}F_{\rm num}^{n-{I_a}}
 -z_{I,q}^{{I_b}+1}F_{\rm num}^{n-1-{I_b}}.
\end{equation}
Hence the compressed contribution of block \(I\) is $ \widetilde H_I^n = \sum_{q=1}^{R_I}a_{I,q}Y_{I,q}^{n}.$
For an uncompressed block \(I\in\mathcal P_{\rm ucmp}\),
\[
H_I^n = \sum_{\ell\in I\cap\{1,\ldots,n\}} \check\omega_\ell F_{\rm num}^{n-\ell}.
\]
At time \(t_n\), the history uses only \(F_{\rm num}^j\) with
\(0\le j<n\). The current-step equation is
\begin{equation}
\label{eq:absoe-current-equation}
V
=
\widetilde H^n
+
\check\omega_0\bigl(\Aa_M V+\Nn_M(V)\bigr),
\qquad 1\le n\le N.
\end{equation}
The computed value \(U_{\rm num}^n\) is obtained by solving
\eqref{eq:absoe-current-equation} inexactly.

\subsection{Stability transfer and error estimate}

For the final computed scheme, the blockwise SOE compression residual is
\begin{equation}
\label{eq:absoe-history-residual}
 r_{\rm hist}^{n}
 :=
 \sum_{I\in\mathcal P_{\rm cmp}}
  \sum_{\ell\in I\cap\{1,\ldots,n\}}
  \left(
 \omega_\ell^{I,\mathrm{SOE}}-\check\omega_\ell
  \right)
 F_{\rm num}^{n-\ell},
 \qquad 1\le n\le N,
\end{equation}
with \(r_{\rm hist}^0=0\). Suppress the accepted-attempt index and set
\(\widehat\varepsilon_{\rm hist}:=
\max_{0\le n\le N}\|r_{\rm hist}^n\|_{\boldsymbol M}\).
The triangle inequality gives
\[
\widehat\varepsilon_{\rm hist}
\le F_*^{\rm num}\sum_{I\in\mathcal P_{\rm cmp}}\rho_I
\le F_{\rm bd}\sum_{I\in\mathcal P_{\rm cmp}}\rho_I
\le\sum_{I\in\mathcal P_{\rm cmp}}\tau_I
\le\tau_{\rm hist}.
\]
This bound uses the right-hand-side bound, the local acceptance condition
\eqref{eq:absoe-local-accept}, and preservation of the sum of the local tolerances in \eqref{eq:absoe-budget-split}. Uncompressed terms contribute no SOE defect.
The ratio \(\frac{F_*^{\rm num}}{F_{\rm bd}}\le1\) gives \(\widehat\varepsilon_{\rm hist}\le \frac{F_*^{\rm num}}{F_{\rm bd}}\tau_{\rm hist}\). Measuring
the computed right-hand-side norm sharpens the prescribed bound without replacing either acceptance condition.

For the computed sequence, the Cauchy--FFT coefficient-recovery residual sequence is
$r^n_{\rm weight} = \sum_{j=0}^{n}
(\check\omega_{n-j}-\omega_{n-j})F_{\rm num}^j$ for $1\le n\le N$,
with $r^0_{\rm weight}=0$. By taking maximum over the time interval, we define the weight-recovery defect by
\[
\varepsilon_{\rm weight}
:= \max_{0\le n\le N}
\left\| r^n_{\rm weight}\right\|_{\boldsymbol M}.
\]
The exact coefficients \(\omega_\ell\) are unavailable.
For the last two recovery levels \(L\) and \(2L\), compute
\[
 \widehat\varepsilon_{\rm weight}
 :=
 F_*^{\rm num}
 \sum_{\ell=0}^{N}
 \left|
 \check\omega_\ell^{(2L)}
 -\check\omega_\ell^{(L)}
 \right|.
\]
This diagnostic compares two recovered coefficient sets and does not by itself bound \(\varepsilon_{\rm weight}\). 
The coefficient-recovery stopping test remains the one specified earlier. 
At the current step, set
\[
\widetilde G_n(V)
 :=
 V-\widetilde H^n
 -\check\omega_0\bigl(\Aa_M V+\Nn_M(V)\bigr),
 \qquad 1\le n\le N,
\]
and define \(r^n_{\rm alg}=\widetilde G_n(U_{\rm num}^{n})\)
for \(1\le n\le N\), with \(r^0_{\rm alg}=0\). The algebraic defect is
\[ \varepsilon_{\rm alg}
:=
\max_{0\le n\le N}
\|r^n_{\rm alg}\|_{\boldsymbol M}.\]

\begin{theorem}
\label{thm:absoe-compression-transfer}
Assume that the hypotheses of the grid version of Theorem~\ref{thm:fully-discrete-stability}
hold for \(U_{\rm e}\) and \(U_{\rm num}\), uniformly with respect to the spatial grid. 
Suppose that \eqref{eq:absoe-F-bound} holds, and that every compressed block satisfies \eqref{eq:absoe-local-accept}.
Then
\[
\max_{0\le n\le N}
\|U_{\rm num}^n-U_{\rm e}^n\|_{\boldsymbol M}
\le
C_T(1+\log(N+1))
\left(
\tau_{\rm hist}
+\varepsilon_{\rm weight}
+\varepsilon_{\rm alg}
\right),
\]
where \(C_T\) is independent of \(h\) and the spatial grid.
\end{theorem}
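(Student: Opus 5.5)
The plan is to write the computed trajectory \(U_{\rm num}\) as a perturbed exact-weight, full-history recursion and then apply the grid version of Theorem~\ref{thm:fully-discrete-stability} with \(U=U_{\rm e}\), \(V=U_{\rm num}\).

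The first step is to unpack the current-step residual. By definition \(r_{\rm alg}^n=\widetilde G_n(U_{\rm num}^n)\), so
\[
U_{\rm num}^n=\widetilde H^n+\check\omega_0F_{\rm num}^n+r_{\rm alg}^n .
\]
Next I expand \(\widetilde H^n\). For \(\ell\le J_0\) and for uncompressed blocks, the recovered weights \(\check\omega_\ell\) are used directly. For compressed blocks, the recurrence \eqref{eq:absoe-block-recurrence} gives \(\widetilde H_I^n=\sum_{\ell\in I\cap\{1,\dots,n\}}\omega_\ell^{I,\rm SOE}F_{\rm num}^{n-\ell}\). I will verify this by summing the recurrence from \(Y^0_{I,q}=0\), using the convention \(F^j_{\rm num}=0\) for \(j<0\).

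Adding and subtracting \(\check\omega_\ell F_{\rm num}^{n-\ell}\) on compressed lags, and then \(\omega_\ell F_{\rm num}^{n-\ell}\) on all lags \(0\le\ell\le n\), gives
\[
U_{\rm num}^n=U^0+\sum_{j=0}^n\omega_{n-j}F_{\rm num}^j+r_{\rm ext}^n,\qquad r_{\rm ext}^n:=r_{\rm hist}^n+r_{\rm weight}^n+r_{\rm alg}^n .
\]
Here \(r_{\rm hist}^n\) is exactly \eqref{eq:absoe-history-residual}, and \(r_{\rm weight}^n\) matches the paper's definition, including the \(\ell=0\) term \((\check\omega_0-\omega_0)F^n_{\rm num}\). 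All three residuals vanish at \(n=0\), and \(U_{\rm e}\) solves the unperturbed recursion with the same \(U^0\). With the additive source, the same values \(f(t_j)\) appear in both \(F\)'s and therefore cancel, as in the remark after Theorem~\ref{thm:temporal-consistency}.

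It remains to bound \(\max_n\|r_{\rm ext}^n\|_{\boldsymbol M}\). For the history part, the triangle inequality, \eqref{eq:absoe-F-bound}, \eqref{eq:absoe-local-accept} and the tolerance-preservation identity from \eqref{eq:absoe-budget-split} give
\[
\|r_{\rm hist}^n\|_{\boldsymbol M}\le F_{\rm bd}\sum_{I\in\mathcal P_{\rm cmp}}\rho_I\le\tau_{\rm hist}.
\]
I need to check that each lag \(\ell\) appears in at most one compressed block, so that the block residuals \(\rho_I\) are not double counted. This holds because \(\mathcal P\) is a disjoint partition. The other two parts satisfy \(\|r_{\rm weight}^n\|\le\varepsilon_{\rm weight}\) and \(\|r_{\rm alg}^n\|\le\varepsilon_{\rm alg}\) by definition. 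Theorem~\ref{thm:fully-discrete-stability}, in its grid form with resolvent constants uniform in the grid by Lemma~\ref{lem:discrete-scarpi-resolvent}, then yields the claimed bound with \(C_T\) independent of \(h\) and the grid.

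The main obstacle is applying Theorem~\ref{thm:fully-discrete-stability} legitimately. Both sequences must stay in a fixed bounded set on which \(\Nn_M\) is Lipschitz uniformly in the grid, and \(h\) must be small enough that \(L_\Nn\|P_h^0\|\le1/2\) can be absorbed. The theorem's hypothesis assumes exactly these conditions, so this step reduces to citing it.

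One subtlety needs care. The stability recursion is written with the exact weights \(\omega\), whereas the computed scheme uses \(\check\omega_0\) in the implicit term. I will handle this by folding the mismatch into \(r_{\rm weight}^n\), which is why that definition includes \(j=n\), rather than changing the resolvent families \(P_h^j,S_h^j\).
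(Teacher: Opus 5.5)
Your proposal is correct and follows the paper's proof. You write \(U_{\rm num}\) as the exact-weight full-history recursion perturbed by \(r_{\rm ext}^n=r_{\rm hist}^n+r_{\rm weight}^n+r_{\rm alg}^n\), with the \(\check\omega_0\) mismatch absorbed into the \(j=n\) term of \(r_{\rm weight}^n\), bound \(\|r_{\rm hist}^n\|_{\boldsymbol M}\le F_{\rm bd}\sum_{I\in\mathcal P_{\rm cmp}}\rho_I\le\tau_{\rm hist}\) via \eqref{eq:absoe-F-bound}, \eqref{eq:absoe-local-accept} and tolerance preservation, and then invoke the grid version of Theorem~\ref{thm:fully-discrete-stability}; the summation of the block recurrence and the disjointness check are details the paper leaves implicit.
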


\begin{proof}
Relative to the exact-weight BE--CQ recursion, the computed solution satisfies
\[
U_{\rm num}^n
=
U^0+
\sum_{j=0}^n
\omega_{n-j}
F_{\rm num}^j+r_{\rm ext}^n,
\qquad 1\le n\le N,
\]
where
\(r_{\rm ext}^n=r_{\rm hist}^n+r_{\rm weight}^n+r_{\rm alg}^n\)
and $r_{\rm ext}^0=0.$
The history-defect bound above and the residual definitions give
\[
\max_{0\le n\le N}\|r_{\rm ext}^n\|_{\boldsymbol M}
\le
\tau_{\rm hist}
+\varepsilon_{\rm weight}
+\varepsilon_{\rm alg}.
\]
Applying the grid version of Theorem~\ref{thm:fully-discrete-stability} to this residual proves the result.
\end{proof}

Combining Theorem~\ref{thm:absoe-compression-transfer} with the grid version of Theorem~\ref{thm:temporal-consistency} shows that positive-time first-order accuracy is preserved if
\[
\tau_{\rm hist}+\varepsilon_{\rm weight}+\varepsilon_{\rm alg}
=\mathcal O\!\left(\frac{h}{1+\log(N+1)}\right).
\]
In convergence tests, \(\tau_{\rm hist}\) is scaled accordingly.

\subsection{Nonlinear solution, complexity, and storage}
Apply the damped inexact Newton--GMRES method of Section~\ref{subsec:fourier_newton_solver} to \(\widetilde G_n\), with \(H^n,\omega_0\) replaced by \(\widetilde H^n,\check\omega_0\).
The corresponding current-step conditions are evaluated with
\(\check\omega_0\).
The outer iteration is terminated when
\[
\|\widetilde G_n(V_k)\|_{\boldsymbol M}
\le
{\rm atol}
+
{\rm rtol}\max\{1,\|\widetilde H^n\|_{\boldsymbol M}\}.
\]
The resulting maximum nonlinear residual is the algebraic defect $\varepsilon_{\rm alg}$ entering the stability-transfer estimate.

\paragraph{Computational complexity and storage}

Define the effective history size by
\begin{equation}
\label{eq:absoe-effective-history-size}
K_{\rm work}
=J_0+\sum_{I\in\mathcal P_{\rm ucmp}}|I|
+\sum_{I\in\mathcal P_{\rm cmp}}R_I.
\end{equation}
The recurrence-based history evaluation for one trajectory requires $\mathcal O(NK_{\rm work})$ vector operations. It reduces the \(\mathcal O(N^2)\) direct-history
work when \(K_{\rm work}\ll N\).
This count excludes coefficient recovery, fitting, local block construction,
restarts, and any direct evaluation of history defects for diagnostics.
The costs of all attempts are accounted for separately.

The exponential sums require
\(\mathcal O(\sum_{I\in\mathcal P_{\rm cmp}}R_I)\) grid vectors.
The fixed-lag recurrence \eqref{eq:absoe-block-recurrence} also uses
\(F_{\rm num}^{n-1-I_b}\), so the current implementation retains the
past right-hand-side values and still requires \(\mathcal O(N)\) stored
vectors. The full \(U_{\rm num}\)-trajectory is stored for diagnostics
and output, not for the recurrence. Thus
\(K_{\rm work}\) measures the number of terms used in history evaluation and the associated arithmetic cost,
rather than the asymptotic total storage.

\section{Numerical results and simulations}
\label{sec:numerics}
The experiments assess temporal accuracy, the perturbation introduced by history compression, and computational cost.
They also examine the history-defect bounds and verify the compression and right-hand-side acceptance conditions.

We compare full-history BE--CQ (BE--FH) with the adaptive blockwise SOE method (Block--BE--SOE). Both runs use the same recovered CQ weights, time steps, and nonlinear solver tolerances. 
The maximum trajectory discrepancy is
\[
D_\infty:=
\max_{0\le n\le N}
\|\boldsymbol U_{\rm SOE}^n-\boldsymbol U_{\rm FH}^n\|,
\]
where \(\|\cdot\|\) denotes the Euclidean norm for the
stiff reaction system and the discrete \(L^2\) norm for the PDEs.

All experiments use \(({\rm rtol},{\rm atol})=(10^{-10},10^{-12})\) and at most 30 Newton iterations per time step.
The forcing parameters are selected adaptively in \([10^{-6},0.1]\), with \(\eta_0=0.1\) at each step.
For the PDEs, GMRES uses restart length 30 and at most 300 Krylov iterations per linear solve.
For the stiff reaction system, it is unrestarted with at most 20 iterations.
Armijo backtracking uses parameter \(10^{-4}\) and at most 40 halvings of a unit step.

Following Section~\ref{subsec:cauchy-fft}, the Cauchy–FFT recovery begins with $\rho_r=0.9995$, which is increased to $(1+\rho)/2$ if necessary to maintain $\rho<\rho_r<1$. 
The recovery parameters include $F_s=10^{-2}$, a target accuracy of $10^{-12}$, $4096$ circle samples, and an initial FFT length of at least $16(N+1)$. 
The successive-doubling tolerance is set to $10^{-11}$ for at most four doublings, and a failed doubling check stops the computation.
The local SOE fits use hybrid weighted least squares with
\(\lambda_{\rm reg}=10^{-14}\) and candidate nodes in
\([10^{-1}/I_b,1/I_a]\). For each PDE system, the independent initial predictor is
\[
F_{\rm bd}^{(0)}
=
\max\left\{F_0,\|\Aa_M U^0+\Nn_M(U^0)\|_{\boldsymbol M}\right\}.
\]

\subsection{Stiff reaction system}

We first consider the 20-component system
\[
{}^{S}D_0^{\alpha(t)}\boldsymbol u(t)
=
-\Lambda\boldsymbol u(t)-\boldsymbol u(t)^3
+\boldsymbol f(t),
\qquad 0<t\le200,
\]
where
\(\Lambda=\operatorname{diag}(10^{2(j-1)/19})_{j=1}^{20}\),
\((\alpha_1,\alpha_2,c)=(0.4,0.9,10^{-2})\),
and the powers are componentwise.
The manufactured exact solution is
\(\boldsymbol u_*(t)=\boldsymbol A e^{-0.05t}\),
with \(A_j=0.4+0.20(j-1)\) and
\(\boldsymbol u(0)=\boldsymbol A\).
The corresponding forcing is
\[
 \boldsymbol f(t)=-0.05\boldsymbol A q_{0.05}(t)
 +\Lambda\boldsymbol u_*(t)+\boldsymbol u_*(t)^3,
 \qquad
 q_\nu=\LT^{-1}\!\left\{\frac{z^{B(z)-1}}{z+\nu}\right\}.
\]

A 64-node midpoint rule on the modified Talbot contour \cite{DingfelderWeideman2015} is used for \(q_{0.05}\), with its value at \(t=0\) taken as the right limit \(q_{0.05}(0)=0\). 
We take \(h=2^{-3},\ldots,2^{-8}\), so that \(N=1600,\ldots,51200\), and set
\[
 \begin{gathered}
 J_0=32,\qquad
 \tau_{\rm hist}=\frac{0.05h}{1+\log(N+1)},\qquad F_0 = 1.
 \end{gathered}
\]
By this choice of \(\tau_{\rm hist}\), the history-compression term has the first-order scale required by Theorem~\ref{thm:absoe-compression-transfer}. 
Block--BE--SOE uses \(1\le R_I\le30\), a three-lag bisection cutoff, and at most three restarts.
All terminal blocks are compressed, so \(K_{\rm work}=32+\sum_I R_I\).

We measure the Block--BE--SOE error by
\[
 E_2^{\rm B}
 =\left(h\sum_{n=1}^{N}
 \|\boldsymbol U_{\rm B}^n-\boldsymbol u_*(t_n)\|_2^2\right)^{1/2}.
\]

The reported computational times \(\mathcal T_{\rm FH}\) and
\(\mathcal T_{\rm B}\) include weight recovery and implicit time stepping, with SOE fitting additionally included for Block--BE--SOE, each measured over three runs.
Repeated fitting and time stepping are included in their respective phase totals. 
The diagonal reaction Jacobian is preconditioned exactly,
so the preconditioned linear operator is the identity.

\begin{table}[!htbp]
\centering
\scriptsize
\setlength{\tabcolsep}{2.5pt}
\caption{Block--BE--SOE and BE--FH  for the stiff reaction system: temporal accuracy, trajectory
discrepancies, normalized history-error bounds and measured defects, and CPU time (seconds).}
\label{tab:num-convergence}
\resizebox{\linewidth}{!}{%
\begin{tabular}{@{}ccccccc@{}}
\toprule
\(N\)
& \(E_2^{\rm B}\) (rate)
& \(D_\infty\)
& \(\mathcal C_{\rm des}/\tau_{\rm hist}\)
& \(\mathcal C_{\rm post}/\tau_{\rm hist}\)
& \(\widehat\varepsilon_{\rm hist}/\tau_{\rm hist}\)
& \(\mathcal T_{\rm FH}/\mathcal T_{\rm B}\) (speedup)\\
\midrule
1600 & \(1.739{\times}10^{-3}\) (--) & \(1.06{\times}10^{-6}\) & 0.6431 & 0.3212 & 0.0347 & 0.107/0.151 (0.707)\\
3200 & \(8.791{\times}10^{-4}\) (0.984) & \(2.68{\times}10^{-7}\) & 0.4459 & 0.2229 & 0.0175 & 0.348/0.208 (1.676)\\
6400 & \(4.434{\times}10^{-4}\) (0.987) & \(4.79{\times}10^{-8}\) & 0.4234 & 0.2116 & 0.0064 & 0.887/0.371 (2.388)\\
12800 & \(2.232{\times}10^{-4}\) (0.990) & \(4.96{\times}10^{-8}\) & 0.6366 & 0.3183 & 0.0143 & 4.451/0.746 (5.970)\\
25600 & \(1.122{\times}10^{-4}\) (0.993) & \(4.63{\times}10^{-8}\) & 0.8966 & 0.4483 & 0.0285 & 17.224/1.408 (12.230)\\
51200 & \(5.630{\times}10^{-5}\) (0.994) & \(2.05{\times}10^{-8}\) & 0.8863 & 0.4431 & 0.0258 & 63.197/2.925 (21.607)\\
\bottomrule
\end{tabular}}
\end{table}

\begin{figure}[!htbp]
\centering
\includegraphics[width=0.37\linewidth]{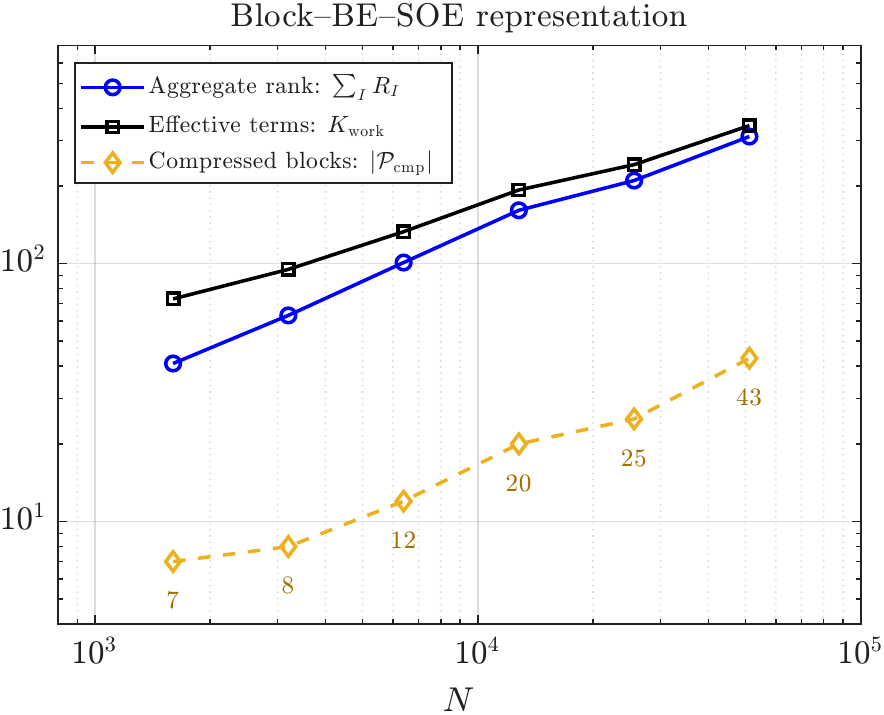}
\hspace{0.01\linewidth}
\includegraphics[width=0.37\linewidth]{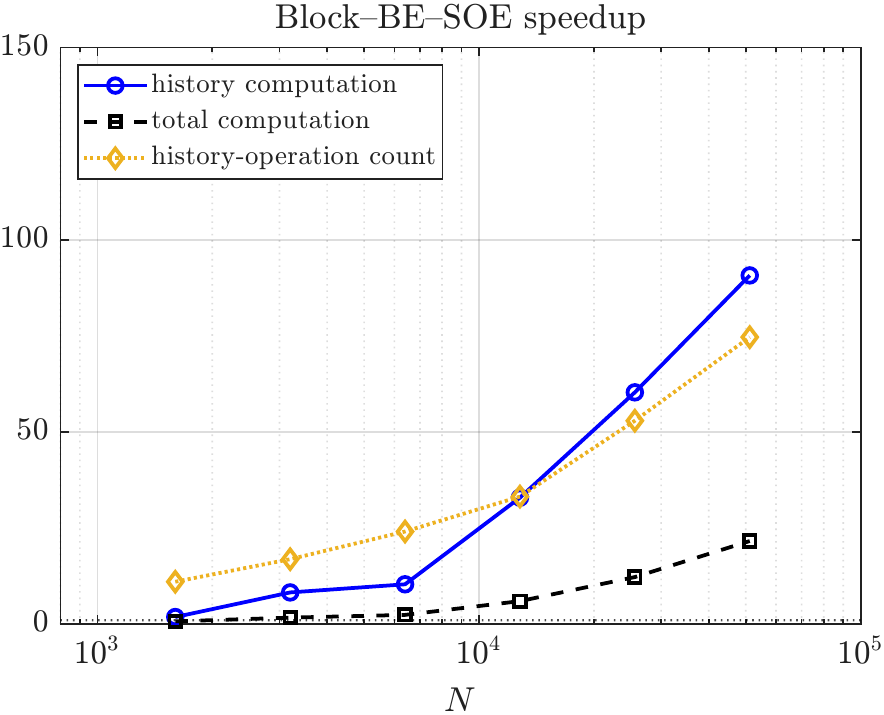}
\caption{Stiff reaction system at $T=200$.
Left: effective history size $K_{\rm work}$, total SOE rank,
and compressed-block count.
Right: BE--FH/Block--BE--SOE ratios for history time,
computational time, and history-operation count.}
\label{fig:num-block-eh}
\end{figure}

Table~\ref{tab:num-convergence} shows first-order convergence
of Block--BE--SOE, with
\(\widehat\varepsilon_{\rm hist}\le\mathcal C_{\rm post}
\le\mathcal C_{\rm des}\le\tau_{\rm hist}\) on all six grids.
The compression perturbation in the temporal \(L^2\) norm
remains below \(0.9\%\) of the BE--FH error.
As \(N\) increases 32-fold, \(K_{\rm work}\) grows from \(73\)
to \(343\).
Block--BE--SOE is faster from \(N=3200\), reaching a total
speedup of \(21.61\) at \(N=51200\).
The reduced effective history size lowers arithmetic work,
while the fixed-lag recurrences still require
\(\mathcal O(N)\) stored right-hand-side vectors.

The evaluation comparison in Figure~\ref{fig:num-block-eh} uses the same right-hand-side sequences for both methods to measure the history computation alone. The history speedup reaches a factor of roughly \(90\) on the finest mesh. Because the effective history size grows much more slowly than the total step number \(N\), the exponential recurrences yield increasing computational savings over direct summation. These specific savings translate directly into a significant reduction in the total computational time, including weight recovery, exponential fitting, and the implicit time stepping.

\paragraph{Comparison with single-tail SOE}
The unpartitioned approach, denoted as Single--BE--SOE, fits the entire tail $[J_0+1,N]$ using candidate decay parameters in $[10^{-4},2]$, tested ranks $1\le R\le300$, and an effective history size of $J_0+R$.
Applying the same fitting and acceptance procedures, we compare its performance against Block–BE–SOE. To systematically evaluate the tradeoff between direct summation and tail compression, we vary $J_0\in\{32,64,128\}$ at $N=12800$ and $25600$, keeping $\tau_{\rm hist}$ fixed for each $N$.

\begin{figure}[!htbp]
\centering
\makebox[\linewidth][c]{%
\includegraphics[width=0.33\linewidth]{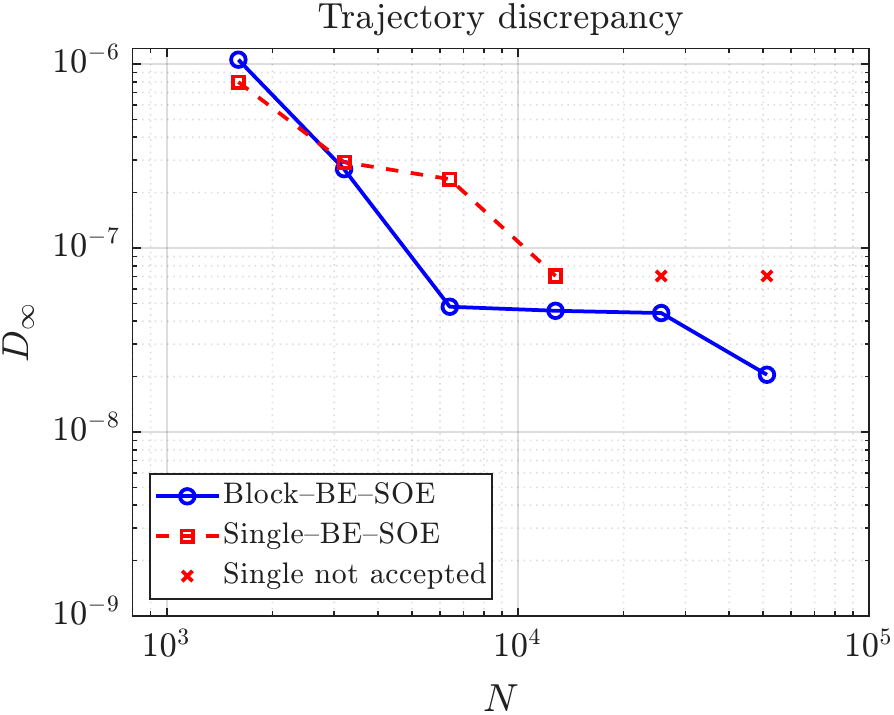}%
\hspace{0.02\linewidth}%
\includegraphics[width=0.33\linewidth]{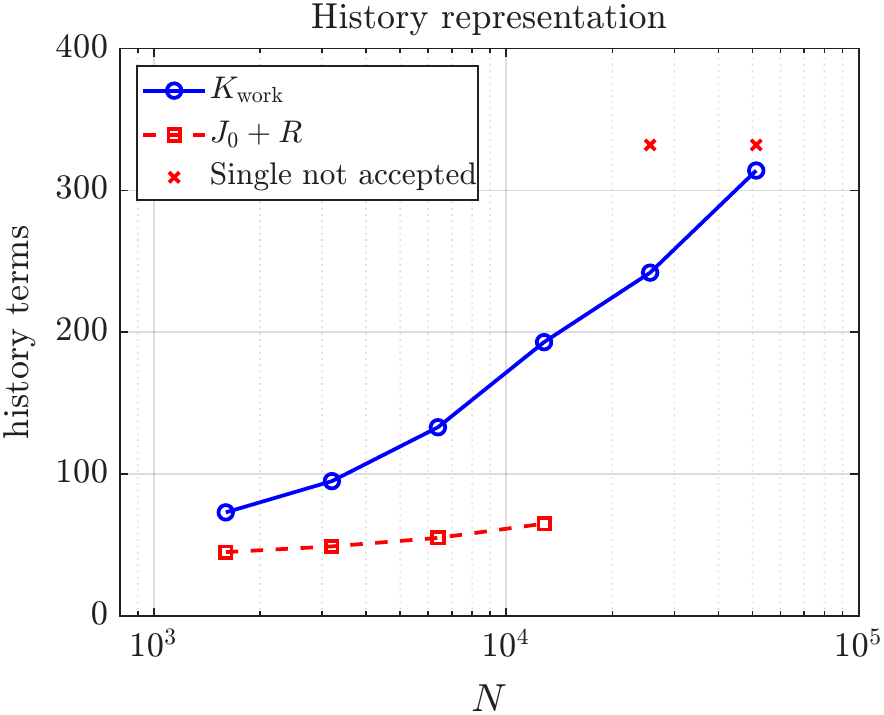}%
\hspace{0.02\linewidth}%
\includegraphics[width=0.325\linewidth]{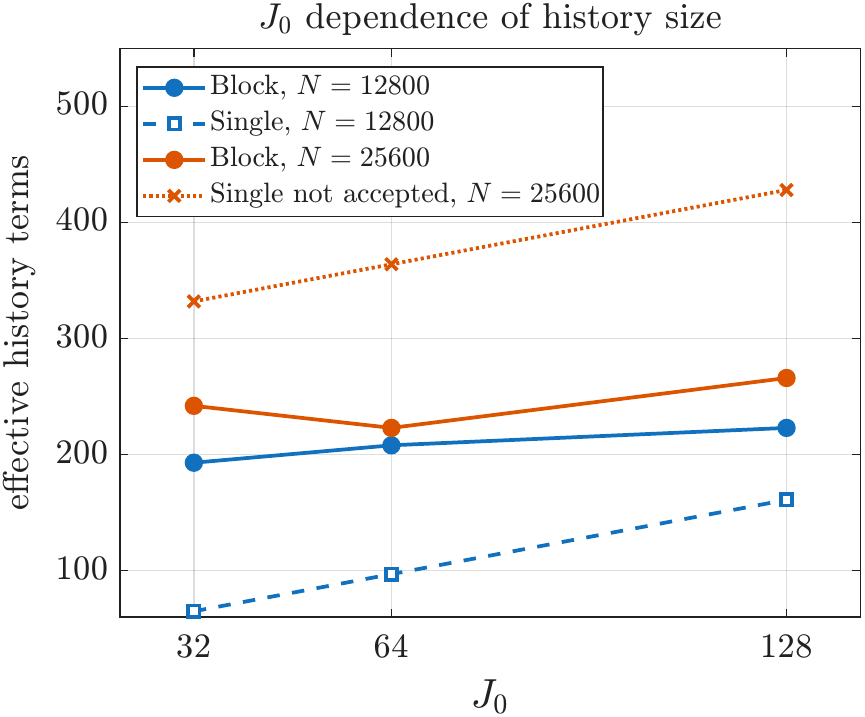}%
}
\caption{Blockwise and single-tail SOE compression for the stiff reaction system. Left: maximum trajectory discrepancy \(D_\infty\)
relative to BE--FH. Middle: effective history sizes \(K_{\rm work}\)
and \(J_0+R\). Both panels use \(J_0=32\).
Right: effective history size versus \(J_0\) at
\(N=12800\) and \(25600\).}
\label{fig:num-block-single}
\end{figure}

As shown in Figure~\ref{fig:num-block-single}, the single-tail SOE requires fewer history terms on the four coarser grids, whereas the blockwise compression successfully satisfies the prescribed tolerance across all six grids. 
At \(N=25600\) and \(51200\), none of the tested single-tail fits meets the coefficient-residual condition.
In the right panel, both history sizes increase with \(J_0\) at \(N=12800\).
At \(N=25600\), the blockwise \(K_{\rm work}\) is smallest at
\(J_0=64\) among the tested cutoffs, whereas no tested single-tail fit is accepted at any of the three cutoffs.
A larger \(J_0\) reduces per-step history work only when the rank saving exceeds the number of additional directly summed lags.

\subsection{Two-dimensional Nagumo equation}

The Nagumo equation is a canonical bistable reaction--diffusion model with
stable states \(u=0\) and \(u=1\), separated by the unstable threshold
\(u=a\) \cite{Nagumo1962}. Its fractional generalization is 
\[
 {}^{S}D_0^{\alpha(t)}u
 =-d_u(-\Delta)^{\beta/2}u
 +r_u u(1-u)(u-a),\qquad (x,y,t)\in(0,1)\times(0,1)\times(0,T],
\]
with
\(
 u_0(x,y)=\tfrac12
 \exp\!\left[-100\bigl((x-\tfrac12)^2+(y-\tfrac12)^2\bigr)\right].
\)
The parameters are
\((\alpha_1,\alpha_2,c)=(0.6,0.9,0.1)\),
\(d_u=2.5\times10^{-4}\), \(r_u=15\), \(a=0.35\), \(T=30\), and
\(h=2^{-8}\).  A \(256\times 256\) Fourier grid is used for
\(\beta=1.6,1.8,2\).
The SOE parameters are \(J_0=32\),
\(4\le R_I\le30\), a block-bisection cutoff of \(2\) lags, and
\(\tau_{\rm hist}=10^{-3}h\) with \(F_0 = 0.5\).

Figure~\ref{fig:num-ac-performance} tests compression accuracy
and cost.  At \(\beta=2\), Block--BE--SOE closely reproduces the BE--FH trajectory over \(0\le t\le30\), with
\(D_\infty=4.694\times10^{-9}\), while reducing the
reported computational time by a factor of \(6.68\).
The 15 compressed blocks give \(K_{\rm work}=155\) for \(N=7680\).
The computational time decreases from \(3489.407\) s to \(522.471\) s, including \(0.353\) s of precomputation for the block method.
Thus the history savings remain substantial after precomputation is included.
Both methods averaged approximately \(3\)
Newton and \(8\) total GMRES iterations per time step,
with maxima of \(3\) and \(9\), respectively.
\begin{figure}[htbp]
\centering
\includegraphics[width=0.39\linewidth]{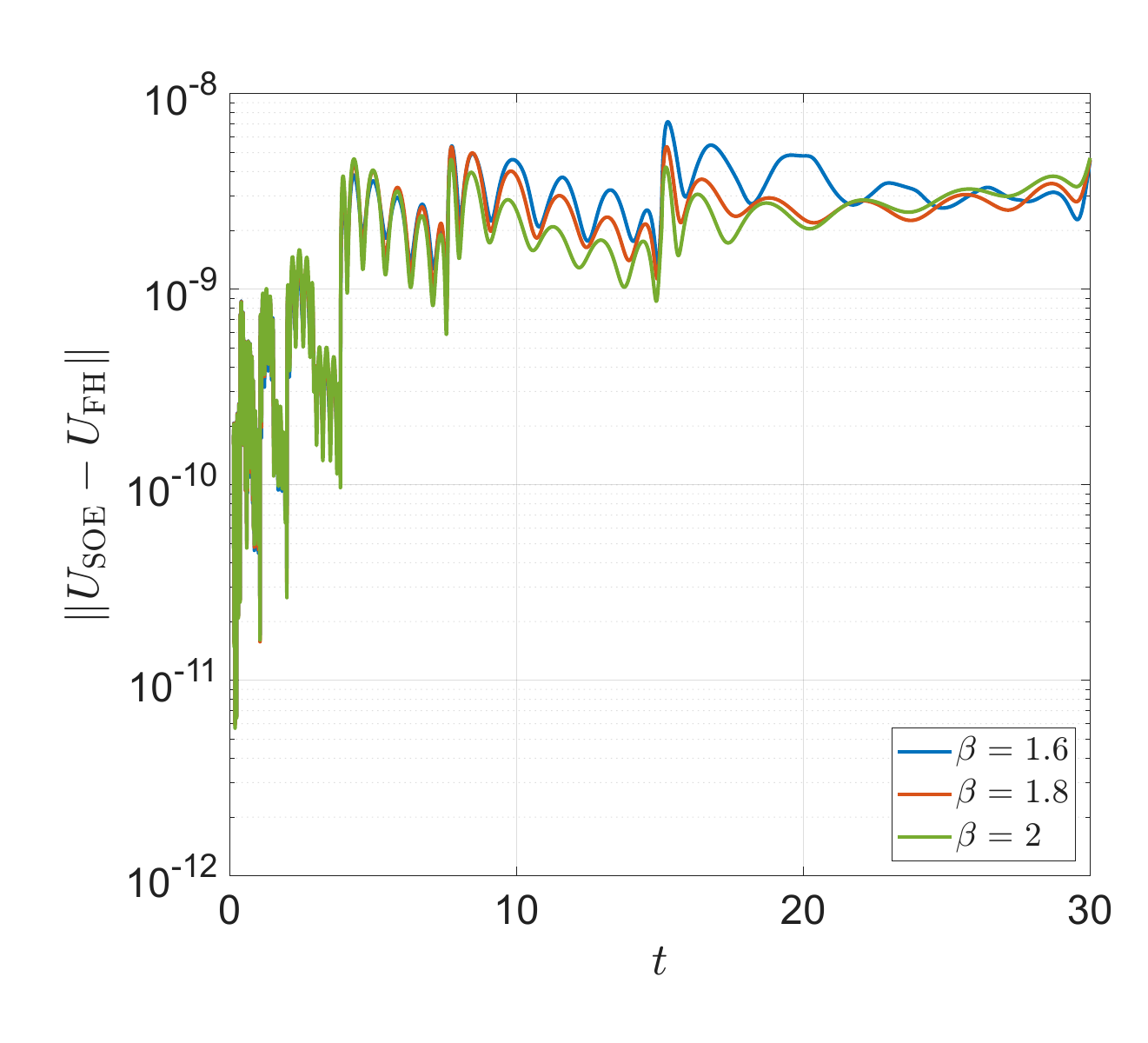}%
\hspace{0.03\linewidth}%
\includegraphics[width=0.39\linewidth]{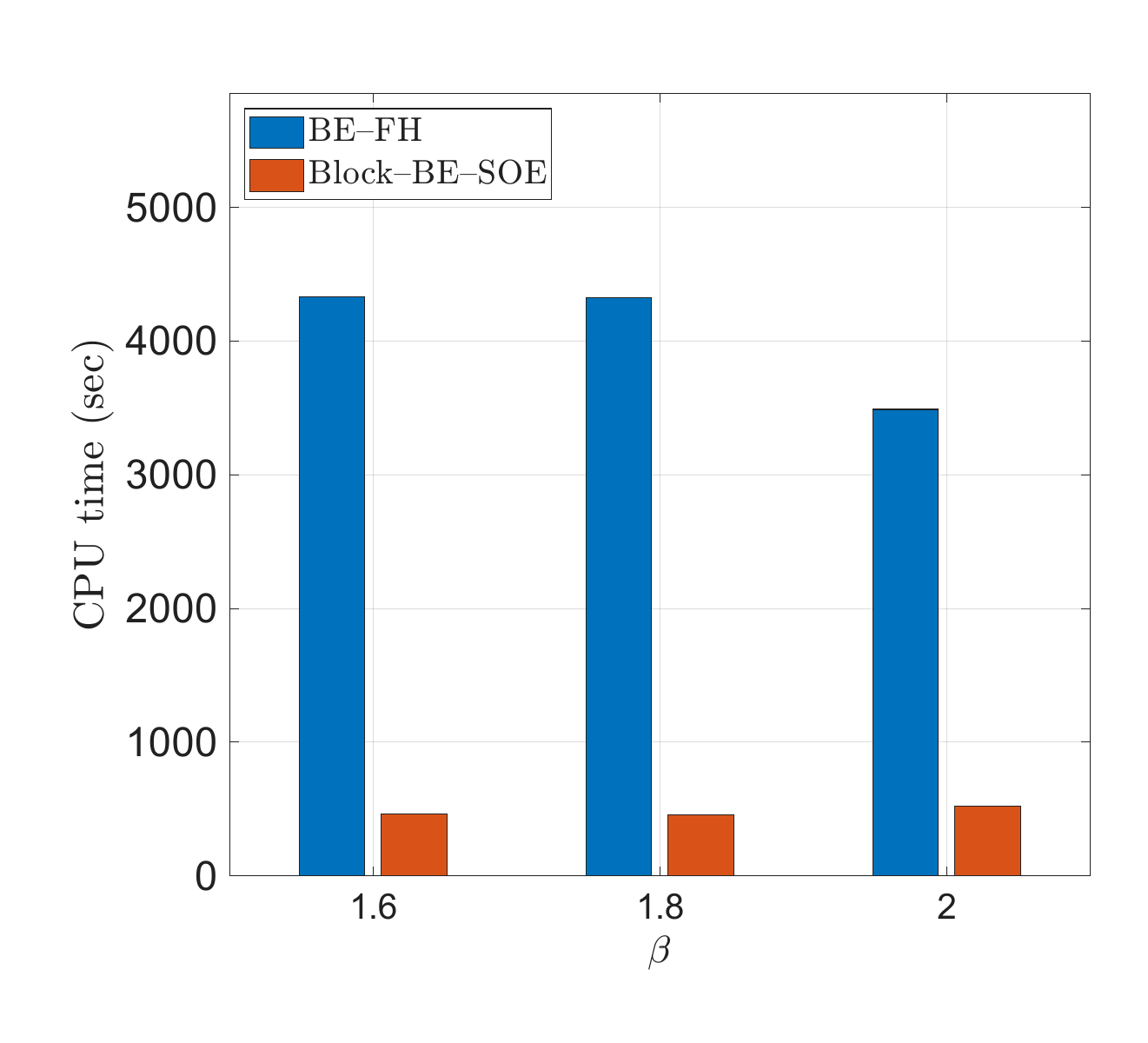}
\caption{Nagumo equation at \(\beta=1.6,1.8,2\) and \(T=30\).
Left: discrete \(L^2\) discrepancy between Block--BE--SOE and BE--FH.
Right: CPU time (s) including Block--BE--SOE precomputation.}
\label{fig:num-ac-performance}
\end{figure}

Figure~\ref{fig:num-ac-patterns} isolates the effect of the spatial order $\beta$. As $\beta$ decreases, the advancing plateau becomes sharper and more localized. This occurs because the fractional order modifies the Fourier diffusion multiplier $d_u\vert{}\kappa\vert{}^\beta$ nonuniformly across wavenumbers.
\begin{figure}[htbp]
\centering
\includegraphics[width=0.30\linewidth]{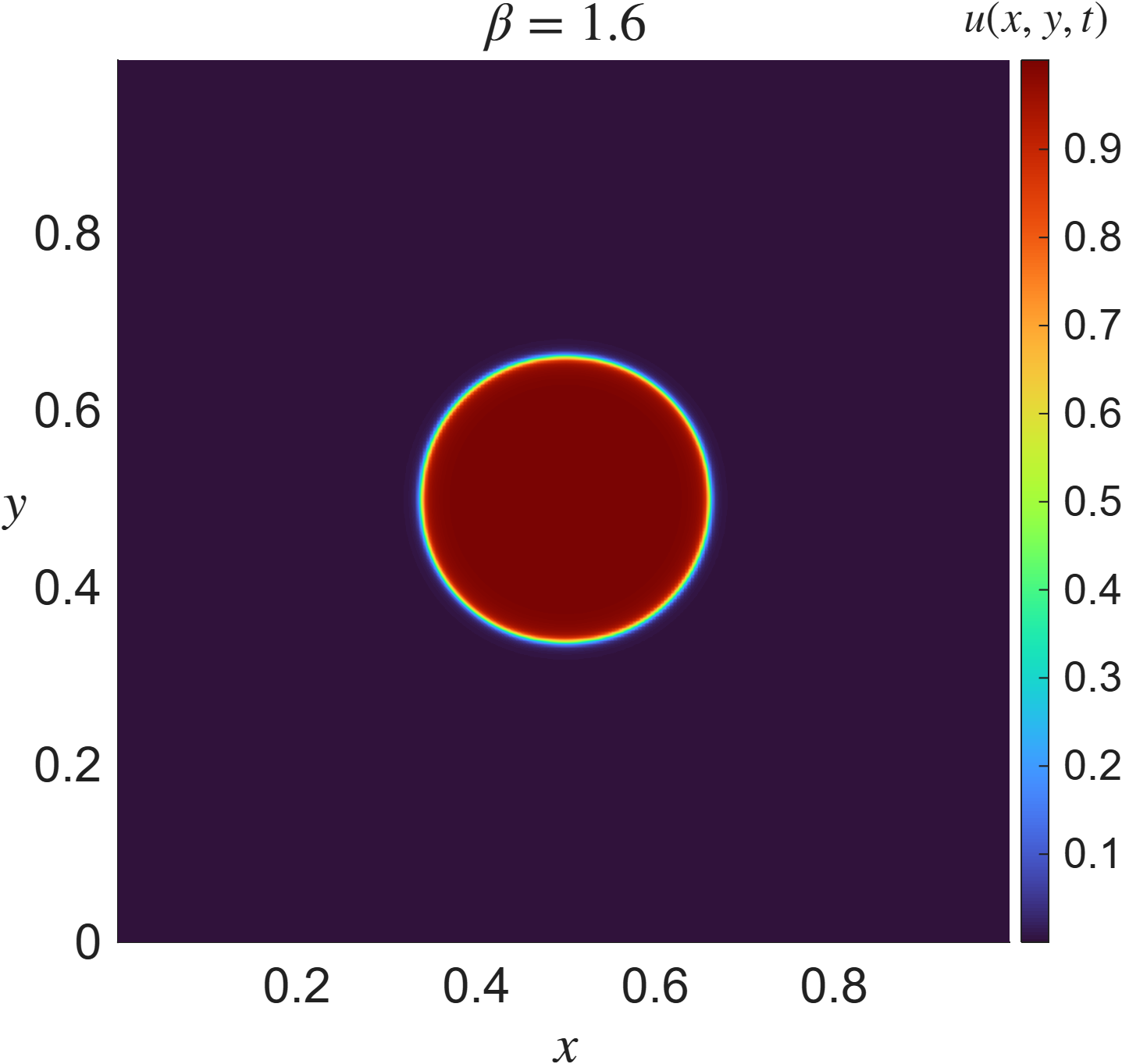}
\hfill
\includegraphics[width=0.30\linewidth]{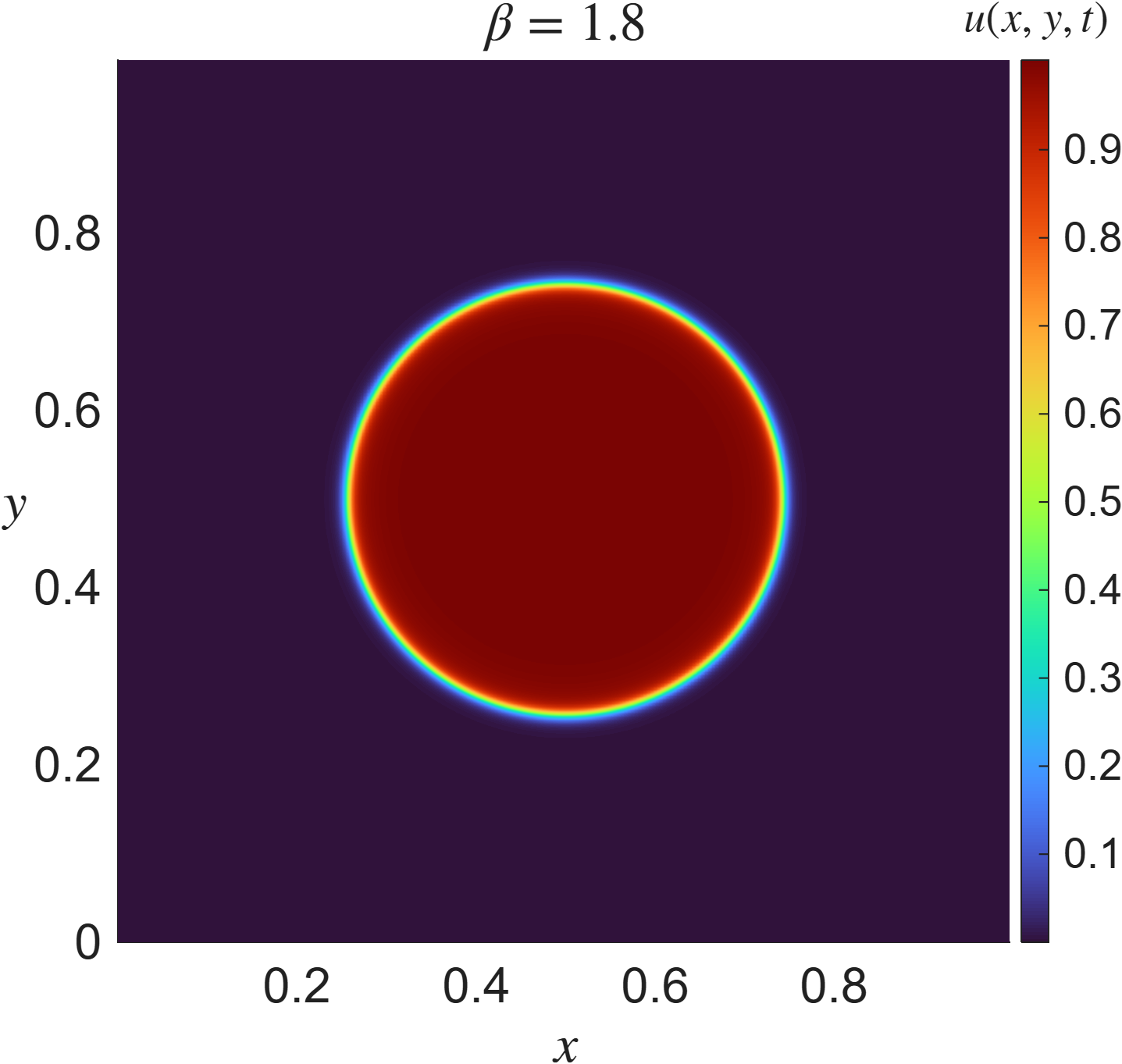}
\hfill
\includegraphics[width=0.30\linewidth]{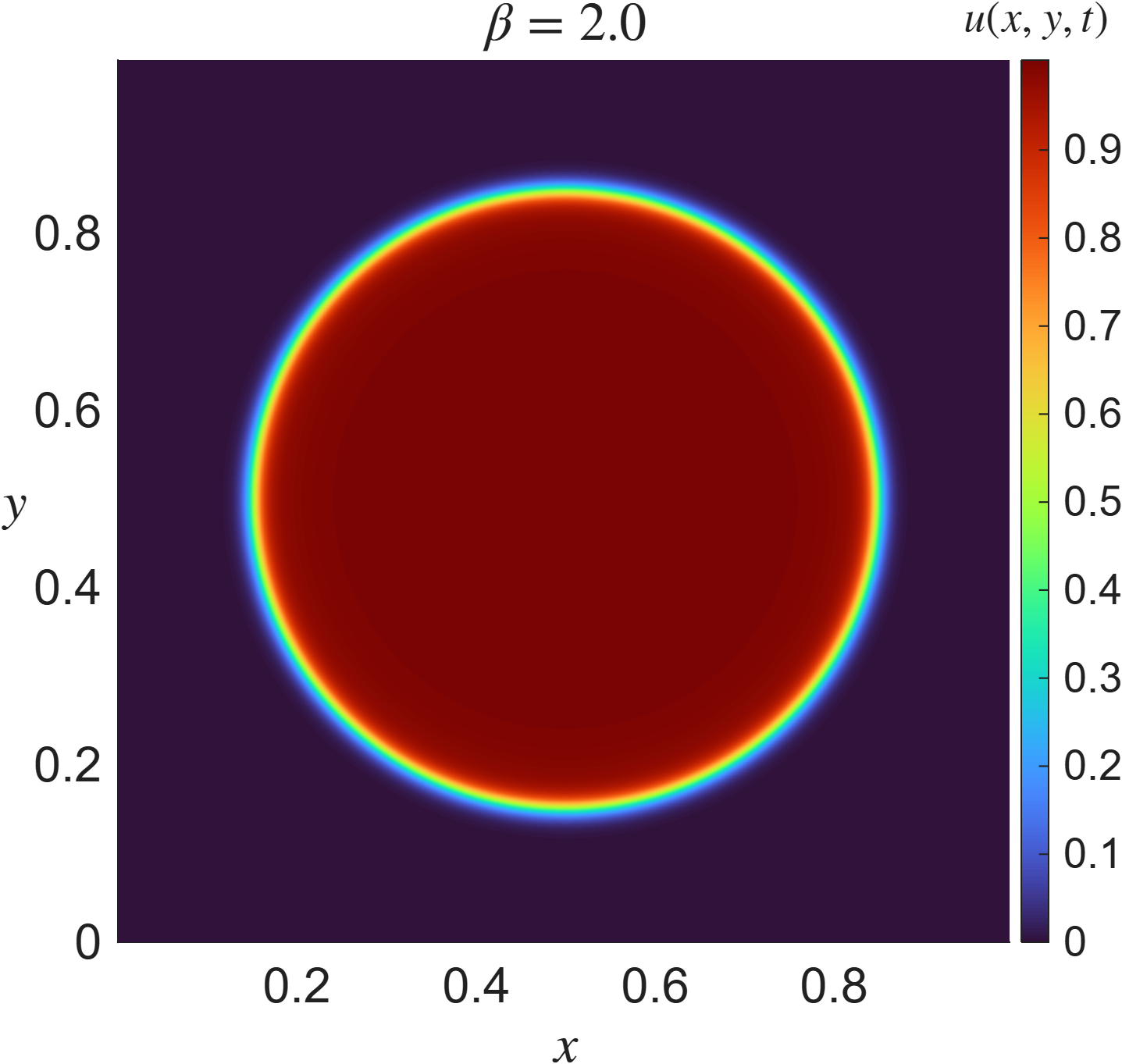}
\caption{Nagumo solutions \(u\) at \(T=30\) computed by Block--BE--SOE for \(\beta=1.6,1.8,2\).}
\label{fig:num-ac-patterns}
\end{figure}

\subsection{Two-dimensional FitzHugh--Nagumo system}

The FitzHugh--Nagumo model is a two-variable reduction of excitable membrane
dynamics, coupling a fast activator to a slower recovery variable
\cite{FitzHugh1961,Nagumo1962}.  The system is 
\[
\left\{\begin{aligned}
 {}^{S}D_0^{\alpha(t)}u
 &=-d_u(-\Delta)^{\beta/2}u+u(u-a)(1-u)-v\\
 {}^{S}D_0^{\alpha(t)}v
 &=-d_v(-\Delta)^{\beta/2}v
 +r_v(bu-\gamma v)
\end{aligned}\right.
,\quad(x,y,t)\in(0,1)\times(0,1)\times(0,T],
\]
where
\((\alpha_1,\alpha_2,c)=(0.6,0.9,0.1)\),
\((d_u,d_v,r_v)=(10^{-5}, 10^{-8}, 10^{-2})\), and
\((b,\gamma,a)=(0.5,1,0.1)\).  
On a $128\times128$ spatial grid, we set the time-step to $h=2^{-3}$ and integrate the system up to $T=1000$, which corresponds to $N=8000$ time steps.
We initialize $u=1$ in the lower-left quarter and $v=0.1$ in the upper half, while both are zero elsewhere. We then apply a single smoothing filter with a normalized periodic $3\times3$ Gaussian kernel of standard deviation $0.5$ grid spacings.
This standard perturbation reliably generates curved excitation fronts and spiral patterns in spatially fractional FitzHugh–Nagumo systems, see \cite{ZhangLiChenZhang2021}. Finally, the SOE parameters are chosen as $J_0=64$, $5\le R_I\le20$, a block-bisection cutoff of $2$ lags, and $\tau_{\rm hist}=10^{-3}h$ with $F_0 = 0.1$.

Figure~\ref{fig:num-fhn-performance} compares BE--FH and Block--BE--SOE over \(8000\) time steps to evaluate the accumulated effect of history compression. For the fractional orders \(\beta=1.6, 1.8\), 
and \(2\), the joint discrete spatial \(L^2\) discrepancies between the two trajectories are \(1.156\times10^{-5}\,,\, 4.270\times10^{-5}\), and \(2.174\times10^{-5}\), respectively. 
At \(\beta=2\), the block method requires only \(110\) effective history terms and reduces the total computational time from \(2169.598\) to \(215.703\) seconds, which achieves a \(10.06\)-fold speedup that fully accounts for \(0.053\) seconds of precomputation. 
The accepted local fits and verified right-hand-side bounds guarantee \(\widehat\varepsilon_{\rm hist}\le\tau_{\rm hist}\) across all three runs (Table~\ref{tab:posteriori defect}), providing the exact history-defect control required by Theorem~\ref{thm:absoe-compression-transfer}.
Across the three values of \(\beta\), both methods averaged approximately \(3\) Newton and \(7\)--\(8\) total GMRES iterations per time step, with maxima of \(3\) and \(9\), respectively.

\begin{figure}[htbp]
\centering
\includegraphics[width=0.39\linewidth]{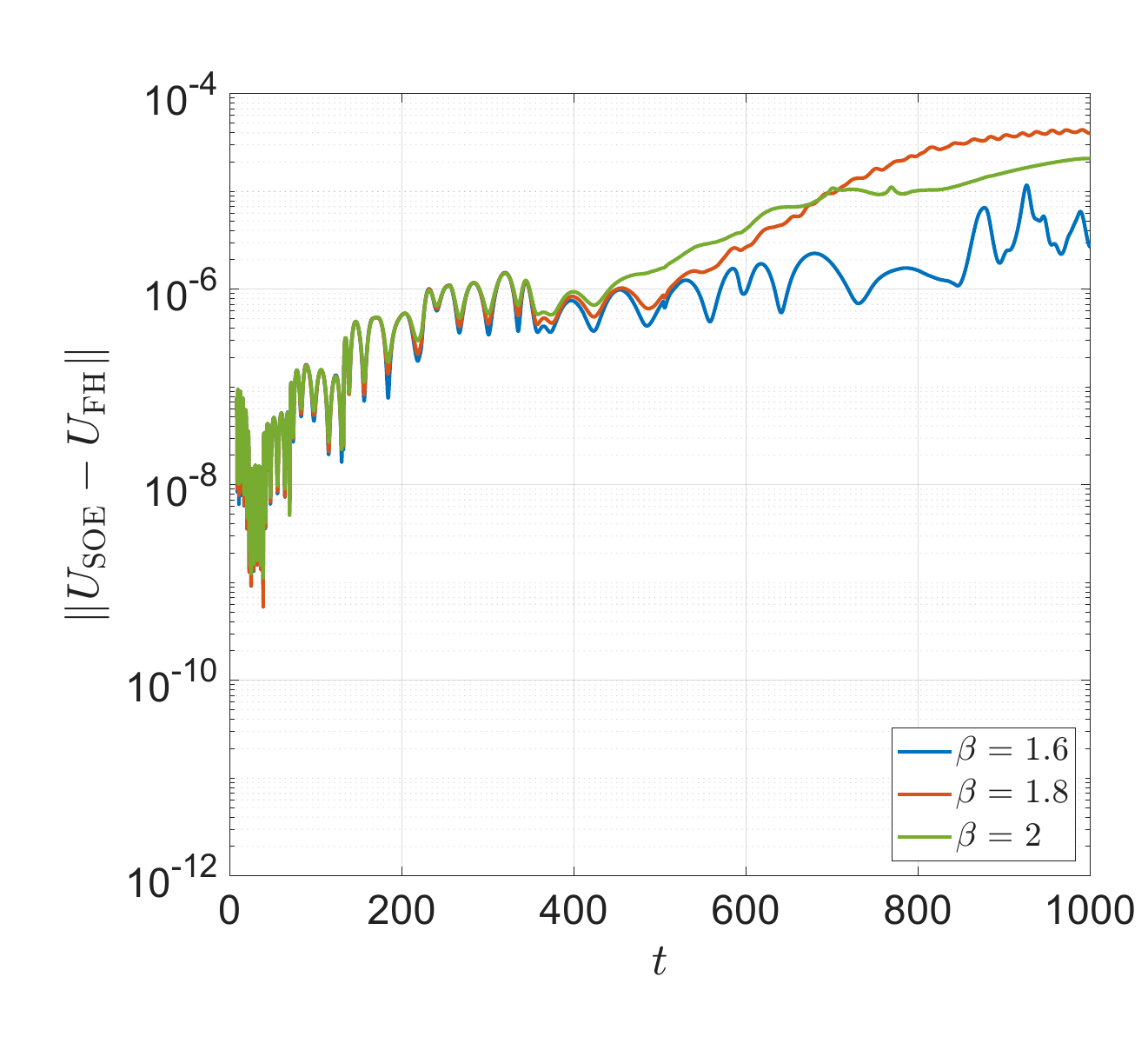}%
\hspace{0.03\linewidth}%
\includegraphics[width=0.39\linewidth]{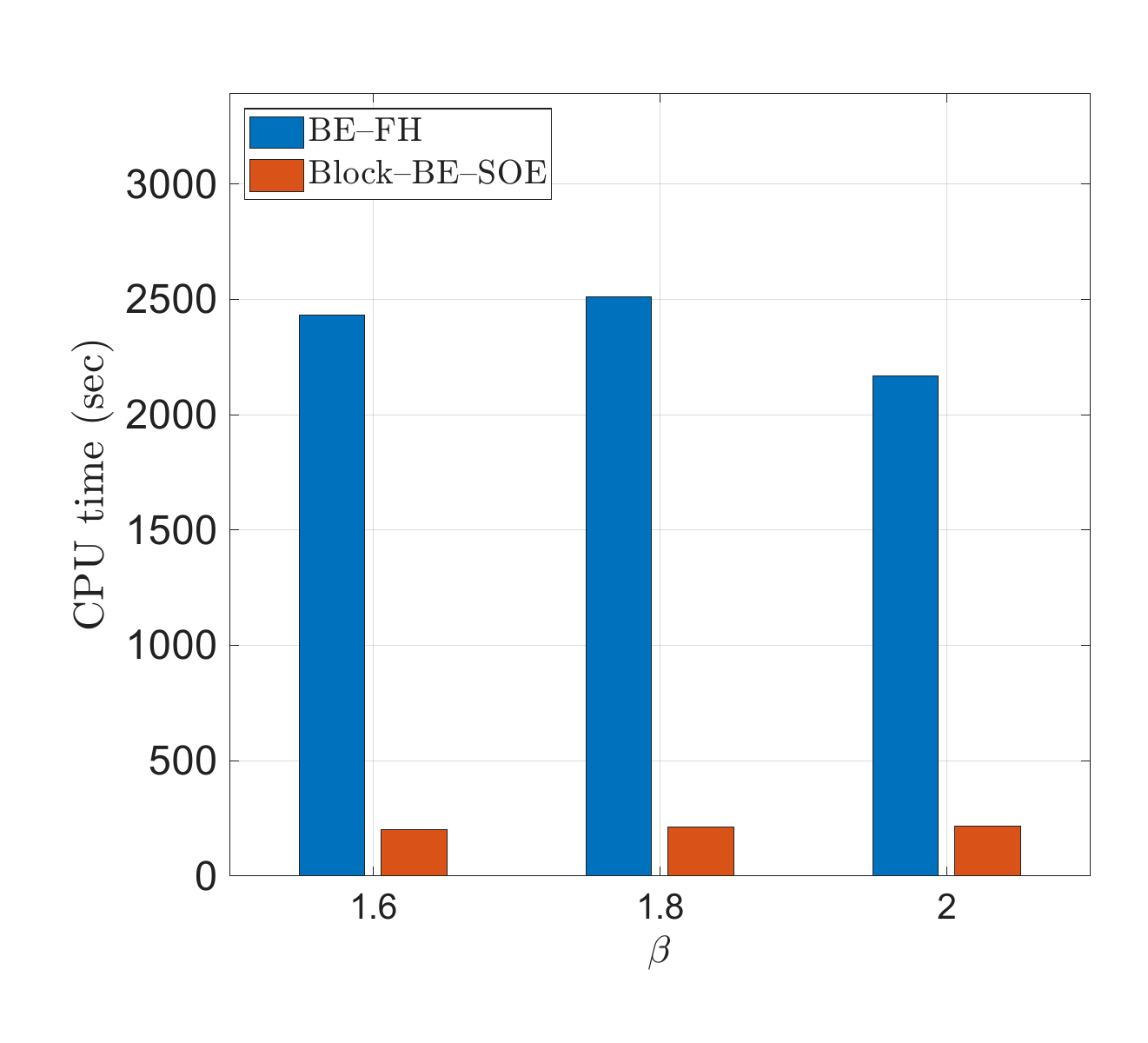}
\caption{FitzHugh--Nagumo system at \(\beta=1.6,1.8,2\) and
\(T=1000\).
Left: joint discrete \(L^2\) discrepancy for both components.
Right: CPU time (s) including Block--BE--SOE precomputation.}
\label{fig:num-fhn-performance}
\end{figure}

At \(T=1000\), the recovery field is concentrated in narrow bands for \(\beta=1.6\), whereas curved fronts and spiral segments occupy more of the periodic cell for \(\beta=1.8\) and \(2\), see Figure~\ref{fig:num-fhn-patterns}.
The fronts are broader at \(\beta=2\) than at \(\beta=1.8\). 
Such spiral structures arise through the interaction of excitation and slow recovery, with the free ends of excitation fronts curling into recovered regions
\cite{HastingsSussman2022}.
With the temporal memory law fixed, decreasing \(\beta\) weakens the relative damping of high spatial frequencies, consistent with the narrower fronts also observed in space-fractional FitzHugh--Nagumo models with classical
time derivatives \cite{BuenoOrovioKayBurrage2014,ZhangLiChenZhang2021}.

\begin{figure}[htbp]
\centering
\includegraphics[width=0.30\linewidth]{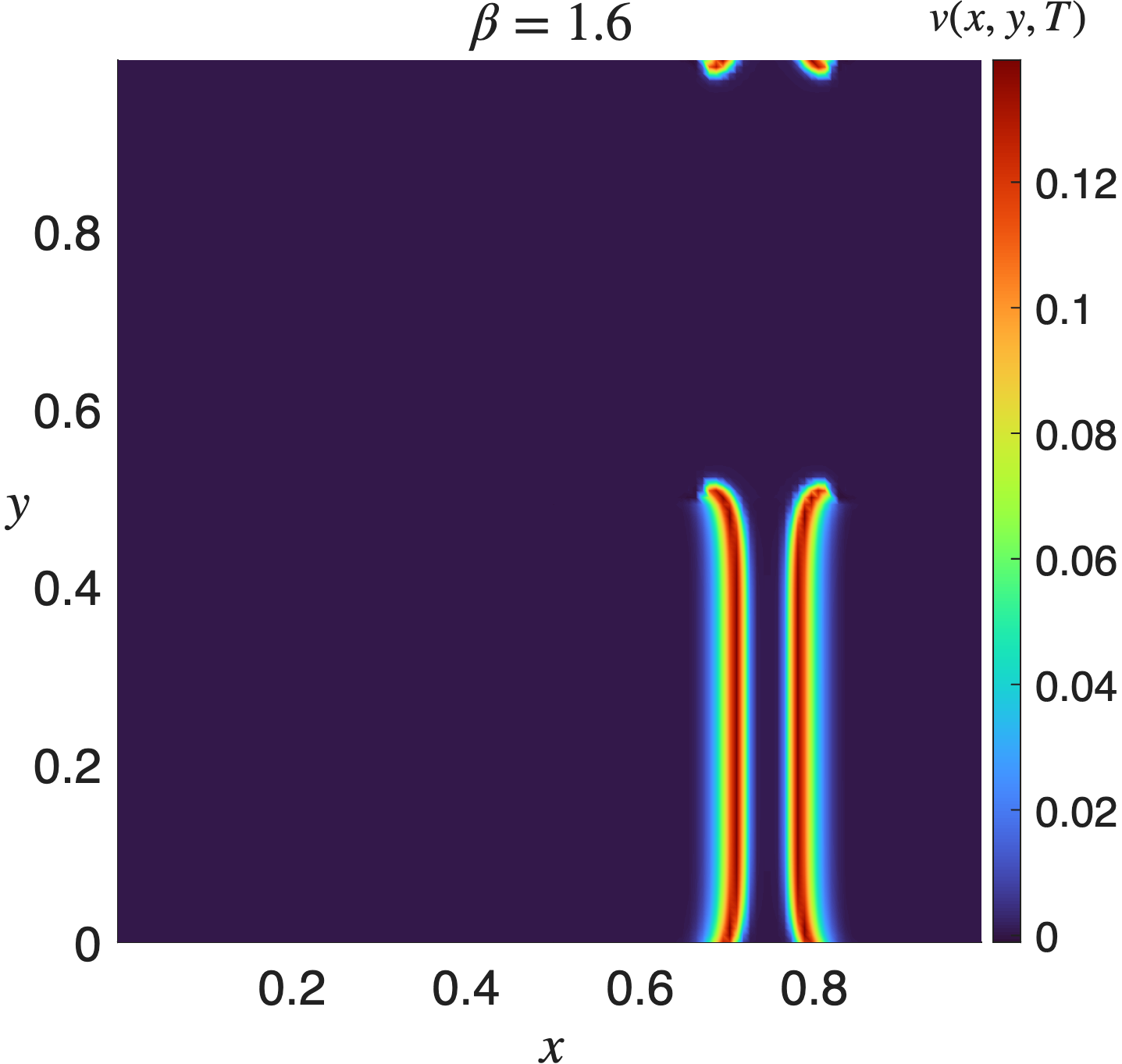}
\hfill
\includegraphics[width=0.30\linewidth]{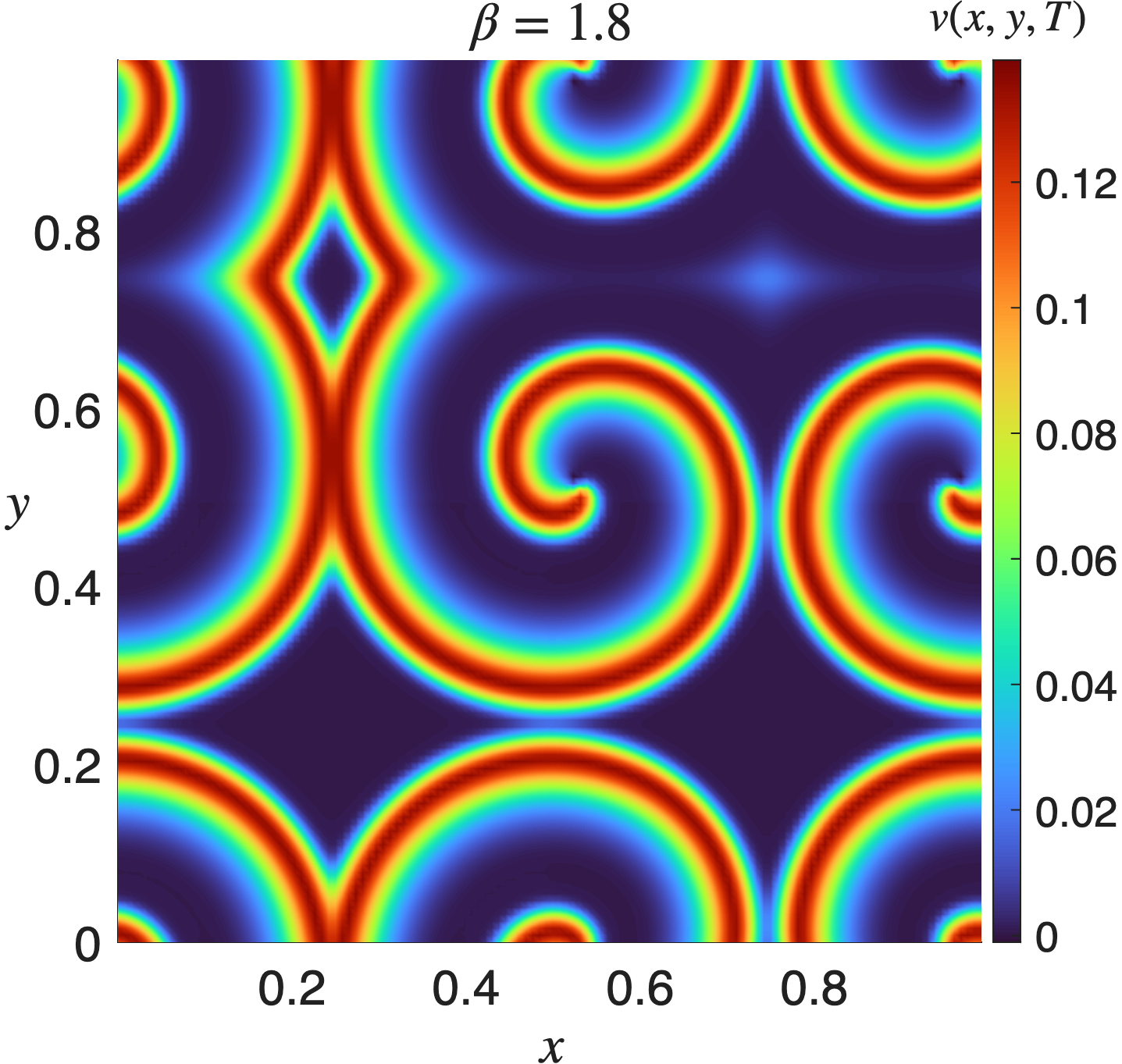}
\hfill
\includegraphics[width=0.30\linewidth]{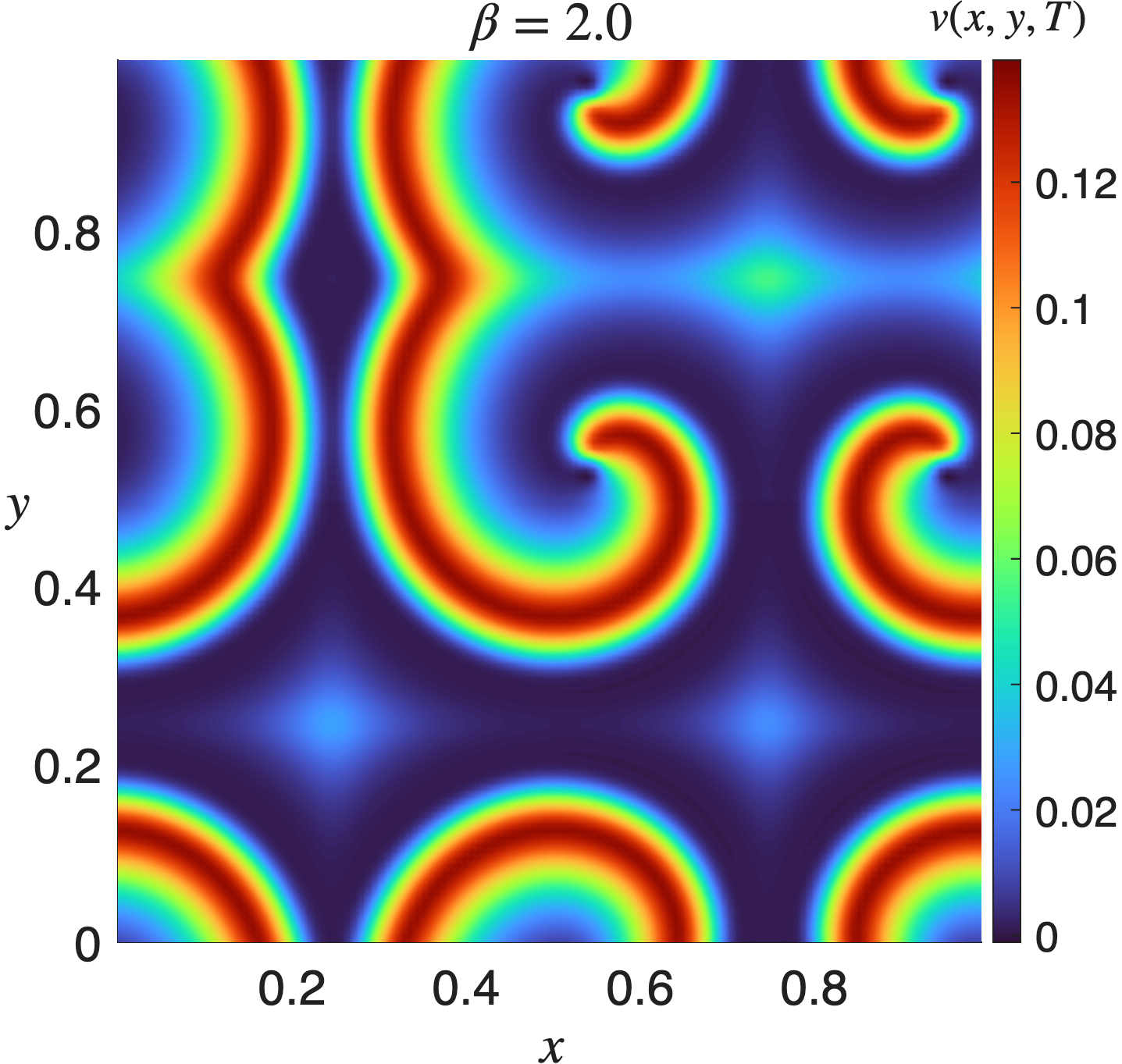}
\caption{Spatial patterns of the FitzHugh--Nagumo recovery field \(v\) at \(T=1000\) for \(\beta=1.6,1.8,2\) (left to right), computed by Block--BE--SOE.}
\label{fig:num-fhn-patterns}
\end{figure}

\begin{table}[!htbp]
\centering

\scriptsize
\setlength{\tabcolsep}{2.2pt}
\caption{PDE compression diagnostics: right-hand-side ratios, normalized bounds and history defects, discrepancies from BE--FH, and restart counts.}
\label{tab:posteriori defect}
\resizebox{\linewidth}{!}{%
\begin{tabular}{@{}lcccccc@{}}
\toprule
Experiment
& \(F_*^{\rm num}/F_{\rm bd}\)
& \(\mathcal C_{\rm des}/\tau_{\rm hist}\)
& \(\mathcal C_{\rm post}/\tau_{\rm hist}\)
& \(\widehat\varepsilon_{\rm hist}/\tau_{\rm hist}\)
& \(D_\infty\)
& restart \\
\midrule
Nagumo, \(\beta=1.6\)
& \(0.2159\) & \(0.8758\) & \(0.1891\)
& \(7.9841\times10^{-3}\) & \(7.1765\times10^{-9}\) & 0\\
Nagumo, \(\beta=1.8\)
& \(0.2051\) & \(0.8758\) & \(0.1797\)
& \(9.6729\times10^{-3}\) & \(5.3478\times10^{-9}\) & 0\\
Nagumo, \(\beta=2\)
& \(0.2378\) & \(0.8758\) & \(0.2083\)
& \(1.1370\times10^{-2}\) & \(4.6938\times10^{-9}\) & 0\\
FitzHugh--Nagumo, \(\beta=1.6\)
& \(0.7627\) & \(0.5358\) & \(0.4086\)
& \(1.3069\times10^{-2}\) & \(1.1563\times10^{-5}\) & 0\\
FitzHugh--Nagumo, \(\beta=1.8\)
& \(0.7456\) & \(0.5358\) & \(0.3995\)
& \(1.6746\times10^{-2}\) & \(4.2705\times10^{-5}\) & 0\\
FitzHugh--Nagumo, \(\beta=2\)
& \(0.7365\) & \(0.5358\) & \(0.3946\)
& \(1.9100\times10^{-2}\) & \(2.1744\times10^{-5}\) & 0\\
\bottomrule
\end{tabular}}
\end{table}

Table~\ref{tab:posteriori defect} verifies the history-defect
bound for all six PDE runs.
The identity
\(\mathcal C_{\rm post}=\frac{F_*^{\rm num}}{F_{\rm bd}}\mathcal C_{\rm des}\)
separates the improvement obtained from the measured right-hand-side norm from the conservatism of the coefficient-residual bound.
The measured history defects are approximately \(3.2\%\)--\(5.5\%\) of \(\mathcal C_{\rm post}\).
Thus the bound controls the observed defects but is not sharp in these tests.

\section{Discussion}

We established the local well-posedness and temporal regularity of reaction–diffusion equations with variable memory based on the Scarpi–Volterra formulation. 
For the numerical approximation, we presented a fully implicit BE–CQ Fourier pseudospectral discretization and designed a blockwise SOE compression approach that controls the history defect through inherited local tolerances and a checked right-hand-side bound. 
We showed that the accelerated scheme retains positive-time first-order accuracy, provided the derived tolerance conditions for weight recovery, compression, and nonlinear solves are satisfied. 
In implementation, reaction-dependent preconditioning and Fourier-transform reuse avoid redundant diffusion transforms without changing the implicit equation. 
Numerical results show close agreement with full-history trajectories and substantially reduced computational costs on finer grids.

Because the compression acts directly on the discrete convolution weights, this construction suggests natural extensions beyond BE--CQ to other convolution-type Volterra discretizations, including those for kernels lacking explicit time-domain formulas. 
Such extensions require accurately computable weights that admit low-rank local SOE approximations, together with stability estimates for the underlying scheme. Although the error analysis accounts for weak initial regularity, the present BE--CQ scheme includes no starting corrections to compensate for reduced accuracy near \(t=0\). 
Higher-order BDF--CQ requires suitable corrections and corresponding stability and error estimates, with the relevant power and possible power--logarithmic terms identified from short-time expansions.
Future work will combine such corrected schemes with defect-controlled SOE compression and investigate hierarchical or semi-infinite-tail recurrences to reduce the \(\mathcal O(N)\) storage footprint of past right-hand-side values  required by the current fixed-block implementation.

\appendix
\section{Proof of Theorem~\ref{thm:semilinear-regularity}}
\label{app:proof-semilinear-regularity}

\begin{lemma}
\label{lem:weakly-singular-gronwall}
Let \(0<\gamma<1\), \(a,b\ge0\), and let \(y\ge0\) belong to \(L^1(0,T)\).
If
\[
y(t)\le a\, t^{\gamma-1}
+b\int_0^t(t-s)^{\gamma-1}y(s)\,ds
\quad\text{for a.e. }t\in(0,T),
\]
then \(y(t)\le C_T a\, t^{\gamma-1}\) almost everywhere, where \(C_T\) depends on \(b,\gamma,T\).
\end{lemma}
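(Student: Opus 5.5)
We will prove the lemma by iterating the inequality and summing the resulting series, which is the standard Henry-type argument for weakly singular Gronwall inequalities. Write \(K y(t)=\int_0^t(t-s)^{\gamma-1}y(s)\,ds\). This operator is positive and maps \(L^1(0,T)\) into itself. Iterating the hypothesis \(m\) times therefore gives
\[
y\le a\sum_{k=0}^{m-1}b^kK^k[t^{\gamma-1}]+b^mK^my
\quad\text{a.e. on }(0,T).
\]
Positivity of \(K\) lets the inequality be applied inside each convolution. The Beta-function identity \(\int_0^t(t-s)^{\mu-1}s^{\nu-1}\,ds=B(\mu,\nu)t^{\mu+\nu-1}\) computes the iterates exactly:
\[
K^k[t^{\gamma-1}]=\frac{\Gamma(\gamma)^{k+1}}{\Gamma((k+1)\gamma)}\,t^{(k+1)\gamma-1}.
\]
For \(t\le T\) this is bounded by \(\frac{\Gamma(\gamma)^{k+1}}{\Gamma((k+1)\gamma)}T^{k\gamma}\,t^{\gamma-1}\).

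The resulting series \(\sum_k (b\Gamma(\gamma)T^\gamma)^k\Gamma(\gamma)/\Gamma((k+1)\gamma)\) converges. This is a Mittag-Leffler type sum, \(\Gamma(\gamma)E_{\gamma,\gamma}(b\Gamma(\gamma)T^\gamma)\), and Stirling's formula shows that \(\Gamma((k+1)\gamma)\) grows superexponentially. Its value is the constant \(C_T\), which depends only on \(b,\gamma,T\).

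The main step is showing that the remainder \(b^mK^my\) tends to zero. The kernel \(K^m\) is convolution with \(\frac{\Gamma(\gamma)^m}{\Gamma(m\gamma)}t^{m\gamma-1}\). Once \(m\gamma\ge1\), this kernel is bounded on \([0,T]\) by \(\frac{\Gamma(\gamma)^m}{\Gamma(m\gamma)}T^{m\gamma-1}\). This gives
\[
|b^mK^my(t)|\le \frac{(b\Gamma(\gamma))^mT^{m\gamma-1}}{\Gamma(m\gamma)}\|y\|_{L^1(0,T)},
\]
which tends to \(0\) uniformly in \(t\) as \(m\to\infty\), again by the superexponential growth of the Gamma function. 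This is the only place where the hypothesis \(y\in L^1\) is used. Care is needed only to justify the a.e. iteration: convolutions of nonnegative \(L^1\) functions are defined a.e. and preserve a.e. inequalities. Letting \(m\to\infty\) gives \(y(t)\le C_Ta\,t^{\gamma-1}\) a.e. on \((0,T)\).
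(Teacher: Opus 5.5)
Your proposal is correct and follows the paper's proof essentially step for step. You iterate the inequality, evaluate the iterated kernels via $(t^{\gamma-1})^{*j}=\Gamma(\gamma)^j t^{j\gamma-1}/\Gamma(j\gamma)$, bound the remainder for $m\gamma\ge1$ by $b^m\Gamma(\gamma)^mT^{m\gamma-1}\|y\|_{L^1(0,T)}/\Gamma(m\gamma)\to0$, and sum the series after factoring out $a\,t^{\gamma-1}$; writing the constant as $\Gamma(\gamma)E_{\gamma,\gamma}(b\Gamma(\gamma)T^\gamma)$ only makes explicit the uniform bound the paper leaves implicit.
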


\begin{proof}
We iterate the inequality and use \(
(t^{\gamma-1})^{*j}
=\frac{\Gamma(\gamma)^j}{\Gamma(j\gamma)}t^{j\gamma-1},
\) where $(t^{\gamma-1})^{*j}$ denotes the $j$-fold convolution of $t^{\gamma-1}$ with itself.
We deduce that for \(m\gamma\ge1\) the remainder after \(m\) iterations is bounded by
\[
\frac{b^m\Gamma(\gamma)^mT^{m\gamma-1}}{\Gamma(m\gamma)}
\|y\|_{L^1(0,T)}\longrightarrow0,\qquad m\to\infty.
\]
After factoring out \(a\,t^{\gamma-1}\), the resulting series is bounded uniformly on \([0,T]\).
\end{proof}

\begin{proof}[Proof of Theorem~\ref{thm:semilinear-regularity}]

Set \(g_0=\Aa u_0+\Nn(u_0)\).
On the space of continuous functions \(v:(0,T_*]\to X\) with finite norm
\[
\|v\|_{\alpha_1,T_*}
=\sup_{0<t\le T_*}t^{1-\alpha_1}\|v(t)\|_X,
\]
consider
\begin{equation}
\label{eq:derivative-reconstruction}
v(t)=P(t)g_0+
\int_0^tP(t-s)\Nn'\!\left(u_0+\int_0^s v(r)\,dr\right)v(s)\,ds.
\end{equation}
For \(v\) in a fixed ball, \(\|\int_0^s v(r)\,dr\|\le s^{\alpha_1}\|v\|_{\alpha_1,T_*}/\alpha_1\), so the reconstructed values lie in a fixed bounded subset of \(X\).
The estimate \eqref{eq:P-local-bound} and
\begin{equation}
      \label{eq:beta-integral}
t^{1-\alpha_1}\int_0^t
(t-s)^{\alpha_1-1}s^{\alpha_1-1}\,ds
=\frac{\Gamma(\alpha_1)^2}{\Gamma(2\alpha_1)}t^{\alpha_1}
\end{equation}
show that the right-hand side maps a sufficiently large closed ball into itself for small \(T_*\).
Splitting the nonlinear difference as
\[
\Nn'(U_v)(v-w)+[\Nn'(U_v)-\Nn'(U_w)]w,
\qquad U_v(s)=u_0+\int_0^s v(r)\,dr,
\]
gives contraction factors of order \(T_*^{\alpha_1}\) and \(T_*^{2\alpha_1}\), respectively.
Thus a unique fixed point exists.
Integrating \eqref{eq:derivative-reconstruction} and using the chain rule, Fubini's theorem, and \eqref{eq:S-P-identity} shows that \(\widetilde u(t):=u_0+\int_0^t v(s)\,ds\) satisfies \eqref{eq:semilinear-mild-solution}.
Uniqueness in Theorem~\ref{thm:maximal-blowup} gives \(\widetilde u=u\), hence \(v=u'\) on \((0,T_*]\).

To extend this construction from \((0,\tau]\), retain the full history
\[
\mathcal G_\tau(t)=P(t)g_0+
\int_0^\tau P(t-s)\Nn'(u(s))v(s)\,ds,
\qquad t\ge\tau.
\]
This function is continuous near \(t=\tau\).
To see this, split the integral at \(\tau/2\).
On the first part, \(P(t-s)\) is norm continuous away from zero and \(\Nn'(u)v\) is integrable.
On the second part, \(\Nn'(u)v\) is bounded, so continuity follows from \(L^1\)-translation continuity of \(P\).
On \([\tau,\tau+\delta]\), solve
\begin{equation}
\label{eq:derivative-continuation}
v(t)=\mathcal G_\tau(t)+
\int_\tau^tP(t-s)\Nn'\!\left(u(\tau)+
\int_\tau^s v(r)\,dr\right)v(s)\,ds.
\end{equation}
The same estimates in the supremum norm give a contraction for small \(\delta\), since \(\int_0^\delta\|P(s)\|\,ds\to0\).
The combined function satisfies \eqref{eq:derivative-reconstruction}, and its integral is again a mild solution.
Uniqueness therefore identifies the reconstructed function with \(u\).

Let \(T_d\) be the supremum of the endpoints of this reconstruction.
On every reconstructed interval contained in \([0,T]\), boundedness of \(\Nn'\) on \(u([0,T])\) gives
\[
\|v(t)\|\le C_Tt^{\alpha_1-1}
+C_T\int_0^t(t-s)^{\alpha_1-1}\|v(s)\|\,ds.
\]
Lemma~\ref{lem:weakly-singular-gronwall} yields \(\|v(t)\|\le C_Tt^{\alpha_1-1}\), with constants independent of the endpoint. 

If \(T_d<T\), then from \eqref{eq:beta-integral}, we have
\[
\|\mathcal G_\tau(t)\|_X
\le C_T\left(
t^{\alpha_1-1}
+\frac{\Gamma(\alpha_1)^2}{\Gamma(2\alpha_1)}
t^{2\alpha_1-1}
\right)
\]
for \(\tau\le t\le T\).
Since \(t\ge\tau\ge T_d/2>0\), this bound is uniform for \(\tau\in[T_d/2,T_d)\).

On one fixed ball containing the reconstructed values, \(\Nn'\) is bounded and Lipschitz.
The contraction length in \eqref{eq:derivative-continuation} can thus be chosen independently of \(\tau\), extending the reconstruction beyond \(T_d\).
Hence \(T_d\ge T\).
Since \(T<T_{\max}\) is arbitrary, the reconstructed derivative is continuous on \((0,T_{\max})\).
This proves \(u\in W^{1,1}(0,T;X)\) and \eqref{eq:semilinear-derivative-bound}.
Integration gives \(\|u(t)-u(s)\|\le C_T|t-s|^{\alpha_1}\).

Set \(g(t)=\Aa u_0+\Nn(u(t))\).
Then \(g\) is H\"older continuous of order \(\alpha_1\), and \(u(t)-u_0=\int_0^tP(t-s)g(s)\,ds\).
By Lemma~\ref{lem:S-resolvent-identity}, the integral truncated at \(t-\varepsilon\) belongs to \(D(\Aa)\).
Using \(S'=\Aa P\) and then the closedness of \(\Aa\) as \(\varepsilon\downarrow0\), we obtain
\begin{equation}
\label{eq:graph-cancellation}
\Aa(u(t)-u_0)=
\int_0^t\Aa P(t-s)[g(s)-g(t)]\,ds+(S(t)-\Ii)g(t).
\end{equation}
The first integrand is bounded by \(C_T(t-s)^{\alpha_1-1}\), and the constant part of the truncated integral is \((S(t)-S(\varepsilon))g(t)\).
Changing variables to \(r=t-s\) and applying dominated convergence gives continuity of \eqref{eq:graph-cancellation} for \(t>0\).
At zero, the integral is \(O(t^{\alpha_1})\), while
\[
(S(t)-\Ii)g(t)
=(S(t)-\Ii)g(0)+(S(t)-\Ii)[g(t)-g(0)]\longrightarrow0
\]
by strong continuity and local boundedness of \(S\).
Thus \(u\in C([0,T];D(\Aa))\).
The linear equivalence above with \(f=\Nn(u)\) gives \eqref{eq:volterra-nonlinear}, and Proposition~\ref{prop:scarpi-volterra-form} gives the strong equation.
\end{proof}

\bibliographystyle{siam-order}
\bibliography{ref}
\end{document}